\newcommand{\define}{\textbf}
\renewcommand{\setminus}{\smallsetminus}
\renewcommand{\phi}{\varphi}
\renewcommand{\tilde}{\widetilde}
\renewcommand{\hat}{\widehat}
\renewcommand{\bar}{\overline}
\renewcommand{\L}{\mathbb{L}}
\newcommand{\D}{\mathbb{D}}
\newcommand{\HS}{\mathfrak{hs}}
\newcommand{\C}{\mathbb{C}}
\newcommand{\Q}{\mathbb{Q}}
\newcommand{\R}{\mathbb{R}}
\newcommand{\N}{\mathbb{N}}
\newcommand{\Z}{\mathbb{Z}}
\renewcommand{\P}{\mathbb{P}}
\newcommand{\A}{\mathbb{A}}
\renewcommand{\O}{\mathcal{O}}
\newcommand{\cp}{\mathcal{P}}
\newcommand{\K}{\mathbb{K}}
\newcommand{\cS}{\mathcal{S}}
\newcommand{\cG}{\mathcal{G}}
\newcommand{\cH}{\mathcal{H}}
\def\<{\ensuremath{\langle}}
\def\>{\ensuremath{\rangle}}
\newcommand{\cHp}{{}^p\mathcal{H}}
\newcommand{\lc}{l^*}
\newcommand{\Gr}{Gr}
\newcommand{\X}{\mathcal{X}}
\DeclareMathOperator{\inter}{int}
\DeclareMathOperator{\IH}{IH}
\DeclareMathOperator{\Aut}{Aut}
\DeclareMathOperator{\Spec}{Spec}
\DeclareMathOperator{\ord}{ord}
\DeclareMathOperator{\pt}{pt}
\DeclareMathOperator{\Hdg}{Hdg}
\DeclareMathOperator{\lk}{lk}
\DeclareMathOperator{\Int}{Int}
\DeclareMathOperator{\Trop}{Trop}
\DeclareMathOperator{\prim}{prim}
\DeclareMathOperator{\Lef}{Lef}
\DeclareMathOperator{\Var}{Var}
\DeclareMathOperator{\init}{in}
\DeclareMathOperator{\ver}{vert}
\DeclareMathOperator{\vol}{vol}
\DeclareMathOperator{\gen}{gen}
\DeclareMathOperator{\rec}{rec}
\DeclareMathOperator{\Conv}{Conv}
\DeclareMathOperator{\st}{st}
\newtheorem{theorem}{Theorem}[section]
\newtheorem{lemma}[theorem]{Lemma}
\newtheorem{proposition}[theorem]{Proposition}
\newtheorem{corollary}[theorem]{Corollary}
\theoremstyle{definition}
\newtheorem{definition}[theorem]{Definition}
\newtheorem{remark}[theorem]{Remark}
\newtheorem{example}[theorem]{Example}
\newcommand{\excise}[1]{}
\begin{document}

\title[]{%Computing cohomology 
%Limit mixed Hodge structures \\
%via
%tropical geometry
%The Tropical Motivic Nearby Fiber
%Tropical geometry and the Hodge theory of hypersurfaces
%Tropical Geometry and the Motivic Nearby Fiber II Should change again?
Tropical geometry, the motivic nearby fiber and  limit mixed Hodge numbers of hypersurfaces 
}
\author{Eric Katz}
\address{Department of Combinatorics \& Optimization, University of Waterloo, 200 University Avenue West, Waterloo, ON, Canada N2L 3G1} \email{eekatz@math.uwaterloo.ca}

\author{Alan Stapledon}
\address{Department of Mathematics\\University of Sydney\\ NSW, Australia 2006}
\email{alan.stapledon@sydney.edu.au}

\keywords{tropical geometry, monodromy, Hodge theory, polytopes, Ehrhart theory, intersection cohomology}
\date{}
\thanks{}

\begin{abstract}

The motivic nearby fiber is an invariant obtained from degenerating a complex variety over a disc. It specializes to the Euler characteristic of the original variety but also contains information on the variation of Hodge structure associated to the degeneration which is encoded as a limit mixed Hodge structure. However, this invariant is difficult to compute in practice. 
Using the techniques of tropical geometry we present a new formula for the motivic nearby fiber. Moreover, since there is a range of available software implementing the main algorithms in tropical geometry, our formula can be computed in practice. We specialize to the case of families of sch\"{o}n complex hypersurfaces of tori where we provide explicit formulas describing the action of the unipotent part of monodromy on the graded pieces (with respect to the Deligne weight filtration) of the cohomology with compact supports.  These families are described combinatorially by a polyhedral subdivision of the associated Newton polytope.  We develop new mixed Hodge theory-inspired combinatorial invariants of such subdivisions, among them the `refined limit mixed $h^*$-polynomial'.   These invariants are related to Stanley's combinatorial study of subdivisions:
in a companion combinatorial paper whose results are applied here, we situate our invariants in Stanley's theory where they become multi-variable extensions of his invariants. Our results generalize work of Danilov and Khovanski{\u\i} and Batyrev and Borisov on the Hodge numbers of hypersurfaces.   We also present analogous formulas describing the action of the unipotent part of monodromy on the intersection cohomology groups of a family of sch\"on hypersurfaces of a projective toric variety.
\end{abstract}

\maketitle
\tableofcontents

\section{Introduction}\label{s:intro}

Let $\O$ be the ring of germs of analytic functions in $\C$ in a neighborhood of the origin, and let $\K$ be its field of fractions.  A variety $X$ over $\K$ is naturally interpreted as a family of complex varieties $f:X\rightarrow\D^*$ where $\D^*$ is a small punctured disc about the origin over which $X$ is defined.  After possibly shrinking $\D^*$, we may assume that $X \rightarrow \D^*$ is a locally trivial fibration, and  we fix a non-zero fiber $X_{\gen} := f^{-1}(t)$ for some $t \in \D^*$.  

Our goal is to compute an important invariant of $X$ called the \define{motivic nearby fiber} $\psi_X = \psi_f$, that was introduced by Denef and Loeser \cite{DLGeometry} and contains information about the extension of $f$ to a family over the whole complex disc $\D$. Moreover, the motivic nearby fiber specializes to the limit Hodge-Deligne polynomial of $X$ and to both the $\chi_y$-characteristic and Euler characteristic of $X_{\gen}$.

The motivic nearby fiber is `additive' in the following sense. For any field $k$,
the \define{Grothendieck ring} $K_0(\Var_k)$ of algebraic varieties over $k$ is the free abelian group generated by isomorphism classes $[V]$ of varieties $V$ over $k$, modulo  the relation 
\begin{equation*}
[V] = [U] + [V \setminus U], 
\end{equation*}
whenever $U$ is an open subvariety of $V$.  Multiplication is defined by 
\[
[V] \cdot [W] = [V \times W].
\]
 We will follow the convention that $\L := [\A^1]$. 
A \define{motivic invariant} over $k$ is a ring homomorphism $K_0(\Var_k) \rightarrow R$, for some ring $R$. 
The motivic nearby fiber is a ring homomorphism 
\begin{equation}\label{e:mmap}
\psi: K_0(\Var_\K) \rightarrow K_0(\Var_\C), \: \psi([X]) = \psi_X. 
\end{equation}

We briefly recall the construction of the motivic nearby fiber, and refer the reader to \cite{BitMotivic} for details. 
A result of Bittner \cite{BitUniv} implies that if $k$ has characteristic zero, then $K_0(\Var_k)$ is generated by the classes of smooth, proper varieties. If $X$ is smooth and proper, then by \cite{KKMS}
 there exists a \define{semi-stable reduction} of $X$. That is, 
after possibly
 pulling back the family $f: X \rightarrow \D^*$ by a map $\D^*\rightarrow \D^*$ ramified over the puncture, there exists an extension of $f$ defined over $\D$ such that the central fiber is a reduced, simple 
 normal crossings divisor with irreducible components $\{ D_i \}_{i \in \{ 1, \ldots, r \}}$. If for every non-empty subset  $I \subseteq \{ 1, \ldots ,  r \}$, we set $D_I^\circ = \cap_{i \in I} D_i \setminus \cup_{j \notin I} D_j$,  then 
\begin{equation*}
\psi_X = \sum_{\emptyset \ne I \subseteq \{ 1, \ldots ,  r \}} [D_I^\circ ] (1 - \mathbb{L})^{|I| - 1}.
\end{equation*}
We will give an approach to computing 
the motivic nearby fiber via tropical geometry.

A result of Luxton and Qu  \cite[Theorem~6.11]{LuxtonQu} that was conjectured by Tevelev in \cite{TevComp},  states that every variety $X$ over $\K$ contains an %dense, 
open, very affine subvariety $X^\circ$ that is \define{sch\"on} in the sense of Tevelev \cite[Definition~1.1]{TevComp}. Here $X^\circ$ being very affine means that  it can be embedded 
as a closed subvariety of $(\K^*)^n$ for some $n$, defined by an ideal $I \subseteq \K[x_{1}^{\pm 1},\ldots, x_n^{\pm 1}]$. In this case, $X^\circ$ being sch\"on means that for 
for every $w \in \R^n$, the corresponding initial degeneration $\init_w X^\circ$ defined by the ideal $\init_w I := ( \init_w(f) \mid f \in I) \subseteq \C[x_{1}^{\pm 1},\ldots, x_n^{\pm 1}]$ of initial degenerations is a 
smooth subvariety of $(\C^*)^n$ \cite[Prop 3.9]{HelmKatz}. For a more geometric description of $\init_w X^\circ$, we refer the reader to Section~\ref{s:proof}. 
The notion of %a  
sch\"onness of a hypersurface of a complex torus  was introduced by Khovanski{\u\i} in \cite{KhoNewton} as a hypersurface \emph{non-degenerate} with respect to its Newton polytope.

Luxton and Qu's result immediately  implies that the Grothendieck ring $K_0(\Var_\K)$ is generated by sch\"on subvarieties of tori. In particular, to describe the motivic nearby fiber, in principle, we may reduce to the case of a sch\"on subvariety of a torus.
\emph{In what follows, we will always assume that $X^\circ \subseteq (\K^*)^n$ is sch\"on.}
\begin{definition} The tropical variety $\Trop(X^\circ)$ associated to $X^\circ$ is the set of points %closure in $\R^n$ of the set of points
\[\{w\in\R^n|\init_w X^\circ\neq \emptyset\}.\]
%\[\{w\in\Q^n|\init_w X^\circ\neq \emptyset\}.\]
\end{definition}

The tropical variety $\Trop(X^\circ)$ can be given a rational polyhedral structure $\Sigma$ such that initial degeneration at $w \in \Trop(X^\circ)$ only depends on the cell containing $w$ in its relative interior (this follows from 
\cite[Theorem~1.5]{LuxtonQu}). Hence
for every cell $\gamma$ of $\Sigma$, we may define $[\init_{\gamma} X^\circ] := [\init_w X^\circ] \in K_0(\Var_\C)$ for any 
%$w \in \Q^n$
$w \in \R^n$ in the relative interior %$\Int{\gamma}$ 
of $\gamma$. Our main result is as follows:

\begin{theorem} \label{t:comp} Let $X^\circ \subseteq (\K^*)^n$ be a sch\"{o}n closed subvariety and let $\Sigma$ be a rational polyhedral structure on $\Trop(X^\circ)$.  
Then the motivic nearby fiber $\psi_{X^\circ}$ is given by
\[
\psi_{X^\circ} =   \sum_{\substack{\gamma\in\Sigma\\\gamma \operatorname{bounded}}}\, (-1)^{\dim \gamma} [\init_\gamma X^\circ]. 
\] 
\end{theorem}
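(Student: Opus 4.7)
My plan is to construct a semistable model of a compactification of $X^\circ$ from the polyhedral structure, apply the Denef--Loeser formula for the motivic nearby fiber, and reorganize the result using the combinatorics of $\Sigma$. As a first reduction, I would show that the right-hand side is invariant under polyhedral refinement of $\Sigma$: if $\tilde\Sigma$ refines $\Sigma$, then for each cell $\gamma \in \Sigma$ the cells $\gamma' \in \tilde\Sigma$ with $\text{relint}(\gamma') \subseteq \text{relint}(\gamma)$ give a CW decomposition of $\text{relint}(\gamma)$, on which the initial degeneration is the constant $\init_\gamma X^\circ$. Since $\chi_c(\text{relint}(\gamma)) = (-1)^{\dim \gamma}$, the alternating sum telescopes and is unchanged. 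Combined with a ramified base change $t \mapsto t^m$, this lets me assume that $\Sigma$ extends to a complete integral unimodular simplicial polyhedral complex $\Delta$ of $\R^n$ with smooth recession fan.

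Next I would form the smooth toric $\O$-scheme $\X(\Delta)$ and take $\bar X \subseteq \X(\Delta)$ to be the closure of $X^\circ$. By sch\"onness together with the Tevelev--Helm--Katz theory of tropical compactifications, $\bar X$ is smooth and proper over $\O$, its special fiber is a reduced strict normal crossings divisor, and the strata of the special fiber are indexed by bounded cells $\gamma \in \Delta$ contained in $\Trop(X^\circ)$. Moreover, for any such $\gamma$, the initial degeneration $\init_\gamma X^\circ \subseteq (\C^*)^n$ is invariant under translation by the $(\dim \gamma)$-dimensional subtorus $T_\gamma$ associated to the affine span of $\gamma$, and hence splits as a trivial bundle $T_\gamma \times D_\gamma^\circ$ where $D_\gamma^\circ := \bar X \cap O_\gamma$ is the corresponding open stratum. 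This yields the crucial identity $[\init_\gamma X^\circ] = (\L - 1)^{\dim \gamma}\,[D_\gamma^\circ]$ in $K_0(\Var_\C)$.

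Applying the Denef--Loeser semistable reduction formula to $\bar X$ and using the ring-homomorphism relation $\psi_{X^\circ} = \psi_{\bar X} - \psi_{\bar X \setminus X^\circ}$ cancels the contributions from cells of $\Delta$ with nontrivial recession against the horizontal-boundary term, leaving
\[
\psi_{X^\circ} = \sum_{\gamma} [D_\gamma^\circ]\,(1-\L)^{\dim \gamma}
\]
summed over bounded $\gamma \in \Delta$ meeting $\Trop(X^\circ)$ (using that $|I| - 1 = \dim \gamma_I$ in the unimodular simplicial setting). Substituting the identity from the previous paragraph and the relation $(1-\L)^{\dim \gamma} = (-1)^{\dim \gamma}(\L - 1)^{\dim \gamma}$ gives $\sum_{\gamma \text{ bounded}} (-1)^{\dim \gamma}[\init_\gamma X^\circ]$; by the refinement invariance from the first paragraph this equals the sum in the theorem over the original $\Sigma$.

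The main obstacle is the construction in the second paragraph: verifying that the toric closure $\bar X$ is genuinely semistable over $\O$ with stratification matching cells of $\Delta$ and with the stated torus-bundle description of $\init_\gamma X^\circ$. This relies essentially on the sch\"on hypothesis, which guarantees that $\bar X$ meets every toric orbit transversely, and on a careful analysis of the $\O$-toric scheme $\X(\Delta)$ and its special fiber. A secondary subtlety is the log-geometric bookkeeping in the third paragraph, namely organising the cancellation of the contributions from cells with nontrivial recession against the horizontal boundary $\bar X \setminus X^\circ$ so that only bounded cells survive.
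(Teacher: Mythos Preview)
Your approach is essentially the same as the paper's. Both begin by proving invariance of the right-hand side under refinement of $\Sigma$ (the paper's Lemma~2.2), pass via ramified base change to a polyhedral structure with smooth recession fan, use the identity $[\init_\gamma X^\circ]=(\L-1)^{\dim\gamma}[X_\gamma^\circ]$, and then combine the Denef--Loeser formula for the compactification with additivity over the horizontal boundary. The only structural difference is that the paper quotes an earlier result (its Theorem~2.3, from \cite{KatzStapledon}) asserting $\psi_{X_\Delta}=\sum_{\gamma\in\Sigma}[X_\gamma^\circ](1-\L)^{\dim\gamma-\dim\rec(\gamma)}$, whereas you sketch that computation directly.

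Two points deserve tightening. First, your cancellation in the third paragraph implicitly requires \emph{induction on dimension}: to know that $\psi_{\bar X\setminus X^\circ}=\sum_{\tau\neq 0}\psi_{X_\tau^\circ}$ equals the sum over unbounded cells $\sum_{\rec(\gamma)\neq 0}[X_\gamma^\circ](1-\L)^{\dim\gamma-\dim\rec(\gamma)}$, you must already know the theorem for each lower-dimensional sch\"on variety $X_\tau^\circ$. The paper makes this induction explicit. Second, your identification $D_\gamma^\circ:=\bar X\cap O_\gamma$ conflates two stratifications. The Denef--Loeser strata $D_I^\circ=\bigcap_{i\in I}D_i\setminus\bigcup_{j\notin I}D_j$ are indexed by subsets $I$ of components (equivalently bounded simplices $\gamma_I$), but each $D_I^\circ$ is the union of orbit-strata $\bar X\cap O_{\gamma'}$ over \emph{all} cells $\gamma'$ with vertex set exactly $I$, including unbounded ones of the form $\gamma_I+\tau$. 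This is the actual source of the ``cells with nontrivial recession'' contributions in your cancellation, so the argument is sound once the notation is disentangled.
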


A key point is that there exist explicit algorithms to compute both the initial degenerations of $X^\circ$ and its tropical variety with a choice of rational polyhedral structure. Moreover, there is a range of available software  that implements these algorithms \cite{GFan,tropicallib}.   Hence given any variety over $\K$, if one is able to produce a stratification into locally closed, very affine sch\"on subvarieties, as guaranteed by Luxton and Qu's result, then the above theorem gives a practical approach to computing the motivic nearby fiber. 

\begin{example}\label{e:concrete}
For a concrete example, let $t$ be a local co-ordinate on $\D$, and let 
\[
C^\circ = \{ (x,y) \in (\K^*)^2 \mid t(1 + x^4 + y^4) + xy(1 + x + y) = 0 \}. 
\]
Then $C^\circ_{\gen}$ is a genus  $3$ curve with $12$ points removed. %In particular, $e(C^\circ_{\gen}) = -16$. 
The corresponding tropical variety has a polyhedral structure with four vertices $v_1 = (1,0)$, $v_2 = (0,1)$, $v_3 = (-1,-1)$ and $v = (0,0)$, 
six bounded edges joining each pair of vertices, and 3 unbounded edges emanating from each $v_i$ in the direction of $v_i$. 
The initial degeneration at each $v_i$, at $v$, and at each bounded edge is isomorphic to  $\A^1$ minus $6$, $2$ and $1$ point respectively.
Theorem~\ref{t:comp} then implies that
\[
\psi_{C^\circ} = 3(\L - 6) + (\L - 2) - 6(\L - 1)  = -14 - 2 \L . 
\]
\end{example}

We provide a proof of Theorem~\ref{t:comp} in Section~\ref{s:proof}. The theorem immediately gives expressions for the motivic nearby fiber of  various partial compactifications of $X^\circ$ that are not smooth in general (Corollary~\ref{c:partial}). 
In particular, it generalizes Theorem~5.1 in \cite{KatzStapledon} in the case of smooth, compactifications (see Remark~\ref{r:generalize}). 

Observe that by composing the motivic nearby fiber map \eqref{e:mmap} with a motivic invariant over $\C$, we obtain a new motivic invariant over $\K$, to which we may apply our formula. 
In particular, as described in detail in Section~\ref{s:motivic}, 
if 
$E: K_0(\Var_\C) \rightarrow \Z[u,v] $ denotes the Hodge-Deligne map, then  we obtain
a series of well-known invariants:
\begin{equation}\label{e:bottom}
K_0(\Var_\K) \xrightarrow{ \psi } K_0(\Var_\C) \xrightarrow{E} \Z[u,v] \xrightarrow{v \mapsto 1} \Z[u] \xrightarrow{u \mapsto 1}  \Z. 
\end{equation}
 For any variety $X$ over $\K$, the polynomial $E(X_\infty;u,v) := E(\psi([X]))$ is called the \define{limit Hodge-Deligne polynomial} of $X$, and encodes information on the variation of mixed Hodge structures
 of the family $X \rightarrow \D^*$. 
The specialization obtained by setting $v = 1$ 
is the \define{$\chi_y$-characteristic} $E(X_{\gen};u,1) = E(X_\infty;u,1)$ of $X_{\gen}$, and encodes information about the Hodge filtration on the cohomology with compact supports of $X_{\gen}$. 
Finally, the specialization $e(X_{\gen}) =  E(X_{\gen};1,1)$ is the familiar \define{topological Euler characteristic} of $X_{\gen}$. 
Theorem~\ref{t:comp} immediately provides formulas for these invariants in the case when $X^\circ$ is sch\"on. % we obtain the following formula for the Euler characteristic of $X^\circ_{\gen}$. 

\begin{corollary}\label{c:main}
Let $X^\circ \subseteq (\K^*)^n$ be a sch\"{o}n closed subvariety and let $\Sigma$ be a rational polyhedral structure on $\Trop(X^\circ)$.  
%Let $X^\circ \subseteq (\K^*)^n$ be sch\"{o}n, let $\Sigma$ be a rational polyhedral structure on $\Trop(X^\circ)$, and 
Let $\ver(\Sigma)$ denote the set of vertices of $\Sigma$. 
If we fix a non-zero fiber $X^\circ_{\gen} := f^{-1}(t)$ for some $t \in \D^*$, then the limit Hodge-Deligne polynomial of $X^\circ$ is given by
\[
E(X^\circ_\infty;u,v) =   \sum_{\substack{\gamma\in\Sigma\\\gamma \operatorname{bounded}}}\, (-1)^{\dim \gamma} E(\init_\gamma X^\circ;u,v), 
\]
the $\chi_y$-characteristic of $X^\circ_{\gen}$ is given by
\[
E(X^\circ_{\gen};u,1) = \sum_{\substack{\gamma\in\Sigma\\\gamma \operatorname{bounded}}}\, (-1)^{\dim \gamma} E(\init_\gamma X^\circ;u,1), 
\]
and the 
Euler characteristic  $e(X^\circ_{\gen})$ of $X^\circ_{\gen}$ is given by
\[
e(X^\circ_{\gen}) =   \sum_{\gamma\in \ver(\Sigma)}\, e(\init_\gamma X^\circ). 
\] 
\end{corollary}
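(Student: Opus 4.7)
The plan is to obtain all three formulas by pushing the identity of Theorem~\ref{t:comp} through the successive motivic invariants displayed in~\eqref{e:bottom}. Since $E \colon K_0(\Var_\C) \to \Z[u,v]$ is a ring homomorphism and Theorem~\ref{t:comp} expresses $\psi_{X^\circ}$ as a $\Z$-linear combination of classes $[\init_\gamma X^\circ]$, simply applying $E$ to both sides produces the first formula. For the $\chi_y$-characteristic I would then set $v = 1$ and invoke the standard fact (recalled in Section~\ref{s:motivic}) that the Hodge filtration is preserved under specialization in a locally trivial family, so that $E(X^\circ_\infty; u, 1) = E(X^\circ_{\gen}; u, 1)$; combined with the first formula this yields the second.

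The third formula is the substantive one: setting $u = 1$ in the second formula gives $e(X^\circ_{\gen})$ as an alternating sum over \emph{all} bounded cells of $\Sigma$, and I must argue that only the vertices survive. The key observation is that for any cell $\gamma$ of positive dimension, the initial degeneration $\init_\gamma X^\circ$ is invariant under the $(\dim \gamma)$-dimensional subtorus $T_\gamma \subseteq (\C^*)^n$ whose cocharacter lattice is the linear span of $\gamma - \gamma$. Indeed, the initial ideal at any interior point of $\gamma$ is homogeneous with respect to the cocharacters tangent to $\gamma$, so $\init_\gamma X^\circ \to \init_\gamma X^\circ / T_\gamma$ is a principal $T_\gamma$-bundle; since $e(\C^*) = 0$, multiplicativity of the topological Euler characteristic in the analytic topology forces $e(\init_\gamma X^\circ) = 0$. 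Hence every term with $\dim \gamma > 0$ vanishes after specialization, and for vertices $(-1)^{\dim \gamma} = 1$.

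I expect the torus-invariance of $\init_\gamma X^\circ$ to be the only point requiring real justification; for this I would rely on the geometric description of initial degenerations promised in Section~\ref{s:proof}, which realizes $\init_\gamma X^\circ$ as a stratum of a toric degeneration and makes the subtorus action transparent. Once this is in place, the remainder of the proof is formal bookkeeping with the ring-homomorphism properties of $E$ and its specializations.
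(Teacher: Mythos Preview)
Your proposal is correct and follows essentially the same route as the paper: push Theorem~\ref{t:comp} through the chain~\eqref{e:bottom}, and for the Euler characteristic use the torus factor in $\init_\gamma X^\circ$ to kill all positive-dimensional cells. The only cosmetic difference is that the paper invokes the identity~\eqref{e:toruscalc}, which records the stronger fact that $\init_\gamma X^\circ \cong T_\gamma \times X_\gamma^\circ$ non-canonically (hence $[\init_\gamma X^\circ]=(\L-1)^{\dim\gamma}[X_\gamma^\circ]$ in $K_0(\Var_\C)$), so the vanishing $e(\init_\gamma X^\circ)=0$ for $\dim\gamma>0$ is read off directly in the Grothendieck ring rather than via a fiber-bundle multiplicativity argument.
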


Here the last equality follows from the fact that  the Euler characteristic $e(\init_\gamma X^\circ)$ is zero unless $\gamma$ is a vertex of $\Sigma$ (see \eqref{e:toruscalc}). 

As discussed above, this corollary provides a strategy to compute any of these invariants. For example, if one wants to compute the Euler characteristic of a complex variety $V$, then 
if one can find a stratification of $V$ into locally closed pieces, each of which can be realized as the general fiber of a sch\"on degeneration, then the above corollary reduces the
problem to finding the Euler characteristic of a set of `simpler' complex varieties.  

Before presenting our main application, we introduce a new motivic invariant over $\K$ (see Section~\ref{s:motivic} for details). 
Given a variety $X$ over $\K$, consider the complex cohomology with compact supports $H^m_c  (X_{\gen})$ of the fiber $X_{\gen}$. Then $H^m_c  (X_{\gen})$ admits three natural filtrations. Firstly, since it 
is a complex variety, it admits a decreasing filtration $F^\bullet$ called the \define{Hodge filtration} and an increasing filtration $W_\bullet$ called the 
\define{Deligne weight filtration}. Secondly, the monodromy map $T: H^m_c  (X_{\gen}) \rightarrow H^m_c  (X_{\gen})$ can be written as $T = T_sT_u$, where $T_s$ is semi-simple and 
$T_u$ is unipotent, and we may consider the action of the nilpotent operator $N = \log T_u$ on $H^m_c  (X_{\gen})$.  A result of Steenbrink and Zucker \cite{SZ} and El Zein \cite{ElZein} states that 
$H^m_c  (X_{\gen})$ admits an increasing filtration $M_\bullet$ called the 
\define{monodromy weight filtration}, such that the filtration induced by $M_\bullet$ on  the quotient $\Gr_r^W H_c^m (X_{\gen})$ encodes the Jordan block structure of the induced action of $N$ on 
$\Gr_r^W H_c^m (X_{\gen})$. Here, we use $H_c^m(X_{\infty})$ to mean compactly supported cohomology equipped with the Hodge, monodromy, (and possibly also weight) filtrations.
We will refer to the corresponding invariants %We will be interested in computing the corresponding invariants 
\[h^{p,q,r}(H_c^m(X_{\infty})) =\dim(\Gr_F^p\Gr^{M}_{p+q}\Gr_r^W H_c^m(X_\infty)),\]
%which we refer to 
as the \define{refined limit mixed Hodge numbers}. Summing over $q$ or $r$ recovers the (usual) mixed  Hodge numbers and the limit mixed  Hodge numbers of $H_c^m (X_{\gen})$ respectively (see  \eqref{e:limitnumber} and \eqref{e:number}). 
%encode the action of the unipotent part of monodromy on the pieces of the (usual) mixed Hodge structure on the complex variety $X_{\gen}$. 
We define a polynomial called the  \define{refined limit Hodge-Deligne polynomial} by 
\[
E(X_\infty; u,v,w) = \sum_{p,q,r} \sum_{m} (-1)^m h^{p,q,r}(H_c^m(X_{\infty})) u^p v^q w^r, 
\]
and show that we have an induced motivic invariant 
\[
\bar{E}: K_0(\Var_\K) \rightarrow \Z[u,v,w], \: [X] \mapsto  E(X_\infty; u,v,w). 
\]
We now have a commutative diagram of motivic invariants 
\[\xymatrix{
K_0(\Var_\K) \ar[r]^{\bar{E}} \ar[d]^\psi  &\Z[u,v,w]  \ar[r]^{\substack{u \mapsto uw^{-1} \\ v \mapsto 1}}   \ar[d]^{w \mapsto 1}  & \Z[u,w] \ar[d]^{w \mapsto 1}  &\\
K_0(\Var_\C) \ar[r]^{E} &\Z[u,v] \ar[r]^{v \mapsto 1} & \Z[u]  \ar[r]^{u \mapsto 1}  & \Z,
}\] 
where the first vertical arrow together with the lower horizontal row coincide with \eqref{e:bottom},  and we have corresponding invariants
\[\xymatrix{
[X] \ar[r] \ar[d] &  E(X_\infty; u,v,w)  \ar[r] \ar[d] & E(X_{\gen}; u,w) \ar[d] &\\
\psi_X \ar[r] & E(X_\infty; u,v) \ar[r]& E(X_{\gen}; u,1)   \ar[r]   & e(X_{\gen}),
}\] 
where $E(X_{\gen}; u,w)$ is the \define{Hodge-Deligne polynomial} of $X_{\gen}$. One may think that every successive specialization forgets about a filtration in the following sense: the invariants 
$E(X_\infty; u,v,w)$, $E(X_\infty; u,v)$, $E(X_{\gen}; u,w)$ and $E(X_{\gen}; u,1)$ encode information about the filtrations $(F^\bullet, W_\bullet, M_\bullet)$, $(F^\bullet, M_\bullet)$, $(F^\bullet, W_\bullet)$ and
$F^\bullet$ respectively. 

For the remainder of the introduction, we assume  that $X^\circ \subseteq (\K^*)^n$ is a sch\"{o}n hypersurface. In this case, the Hodge-Deligne polynomial  $E(X^\circ_{\gen}; u,w)$ encodes precisely the (usual) mixed Hodge numbers of $X^\circ_{\gen}$, and its computation is a classical problem. Indeed, an algorithm to compute the mixed Hodge numbers of a sch\"on hypersurface of a complex torus was given by Danilov and Khovanski{\u\i} in \cite{DKAlgorithm}. Much later, using deep results from intersection cohomology, a combinatorial formula was given by Batyrev and Borisov, and was the key technical result in their construction of mirror Calabi-Yau varieties  in \cite{BBMirror}. A cleaner combinatorial formula was later given by 
Borisov and Mavlyutov in \cite{BMString}. Finally, a combinatorial proof of the Borisov-Mavlyutov formula was given by the second author in  \cite{StaMirror}, as part of work giving a representation-theoretic generalization. 

Our main application is a combinatorial formula for the refined limit mixed Hodge numbers of the sch\"on hypersurface $X^\circ$. 
In this case, this is equivalent to giving a combinatorial formula for the refined limit Hodge-Deligne polynomial $E(X^\circ_\infty; u,v,w)$. 
In particular, by specializing, we obtain  a combinatorial formula for the limit mixed Hodge numbers of $X^\circ$. Our result also specializes to 
give  the Borisov-Mavlyutov formula for the usual mixed Hodge numbers of $X^\circ_{\gen}$. Although we make use of the strategy of the Danilov-Khovanski{\u\i} algorithm, 
our proof is self-contained and only relies on Theorem~\ref{t:comp} together with
some new combinatorics.  
In particular, in Section~\ref{s:chiy}, using the theory of valuations of polytopes (see, for example, \cite{McMullen}), we present a new proof of a formula 
of Danilov-Khovanski{\u\i} \cite[Section~4]{DKAlgorithm} for the $\chi_y$-characteristic of $X^\circ_{\gen}$. 
Since the necessary combinatorial results are  involved, and we expect them to be of  outside interest, we will only quote them as needed and defer all proofs and discussion to \cite{otherpaper}.  We will mention that some of these results build on the work of Stanley \cite{StaSubdivisions}, together with recent work of  Athanasiadis and  Savvidou \cite{AthFlag,AthSymmetric}, and Nill and Schepers \cite{NSCombinatorial}.

As explained in Section~\ref{ss:tropical}, we may associate to $X^\circ$ its corresponding Newton polytope $P$ together with a corresponding regular, lattice polyhedral subdivision $\cS$. 
In \cite[Section~9]{otherpaper}, we introduce a combinatorial invariant $h^*(P,\cS; u,v,w) \in \Z[u,v,w]$  called the \define{refined limit mixed $h^*$-polynomial} of $(P,\cS)$, 
 that only depends on the poset structure of $\cS$, together with the number of lattice points in all 
dilates of all cells of $\cS$. This invariant has several interesting specializations. In particular, $h^*(P,\cS; u,1,1) = h^*(P; u) $ is the usual $h^*$-polynomial of $P$, encoding the number of lattice points in all dilates of $P$ \cite{BRComputing}.  

\begin{theorem}\label{t:mainhyper}
Let $X^\circ \subseteq (\K^*)^n$ be a sch\"{o}n hypersurface, with associated Newton polytope and polyhedral subdivision $(P,\cS)$ and $\dim P = n$. Then the refined limit Hodge-Deligne polynomial of $X^\circ$ is given by
\[
uvw^2E(X_\infty^\circ;u,v,w) = (uvw^2 - 1)^{\dim P} + (-1)^{\dim P + 1}h^*(P,\cS;u,v,w).
\]
\end{theorem}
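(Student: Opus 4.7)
My plan is to use Theorem~\ref{t:comp} together with combinatorial machinery from the companion paper \cite{otherpaper} to identify the refined limit Hodge-Deligne polynomial $E(X^\circ_\infty; u, v, w)$ with the closed-form right-hand side. The overall structure mirrors the deductions from Theorem~\ref{t:comp} to Corollary~\ref{c:main}, but the refined polynomial encodes the interaction of the monodromy weight filtration $M$ with both $F$ and $W$, so Theorem~\ref{t:comp} alone (which lives in $K_0(\Var_\C)$) must be supplemented by further analysis. The key observation is that for a complex variety $V$ viewed as a trivial family over $\K$, monodromy is trivial, $M$ coincides with $W$, and the refined polynomial collapses to $E(V; uw, vw)$.

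First, I would establish a refined analogue of Corollary~\ref{c:main} expressing $E(X^\circ_\infty; u, v, w)$ as a sum over bounded cells of $\Sigma$, with each contribution built from the refined polynomial of the initial degeneration $\init_\gamma X^\circ$. The non-trivial monodromy contribution is tracked via the tropical semi-stable reduction associated to a unimodular refinement of $\cS$: its strata correspond to bounded cells of (the refinement of) $\Sigma$, and the Steenbrink-Zucker \cite{SZ} / El Zein \cite{ElZein} description of the limit mixed Hodge structure via the weight spectral sequence yields the requisite sum formula. Next, I would exploit that each $\init_\gamma X^\circ$ is a torsor under a subtorus $T_\gamma \cong (\C^*)^{\dim\gamma}$ over a sch\"on hypersurface $Y_F \subset (\C^*)^{\dim F}$ with Newton polytope the dual face $F$ of $\cS$, where $\dim F = n - \dim \gamma \ge 1$. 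Multiplicativity of the Hodge-Deligne polynomial contributes a torus factor $(uvw^2 - 1)^{\dim \gamma}$ to each term. Applying the Danilov-Khovanski{\u\i}/Borisov-Mavlyutov formulas (or the valuation-theoretic proof in Section~\ref{s:chiy}) expands each $Y_F$ contribution combinatorially in terms of $h^*$-polynomials of faces of $F$, and reorganizing over chains of faces of $\cS$ produces a triple combinatorial sum in $u, v, w$.

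The principal obstacle is the final combinatorial identity: multiplication by $uvw^2$ must collapse this triple sum to $(uvw^2 - 1)^{\dim P} + (-1)^{\dim P + 1} h^*(P, \cS; u, v, w)$. This is precisely the content of the combinatorial machinery of \cite{otherpaper}, where $h^*(P, \cS; u, v, w)$ is constructed inside Stanley's framework for subdivisions so that such inclusion-exclusion sums over the face poset of $\cS$ collapse to this closed form, with the $(uvw^2 - 1)^{\dim P}$ term arising from the ambient-torus contribution in the Danilov-Khovanski{\u\i} formula applied to faces of $\cS$ on $\partial P$. Carrying out this identification in detail, via identities relating $h^*(P, \cS; u, v, w)$ to local $h$-polynomials and relative $h^*$-polynomials of the subdivision, is the main substantive step of the proof.
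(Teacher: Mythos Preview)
Your first step is where the proposal breaks down. You claim a ``refined analogue of Corollary~\ref{c:main}'' expressing $E(X_\infty^\circ;u,v,w)$ as a sum over bounded cells, with terms built from the refined polynomials of the initial degenerations. But the initial degenerations $\init_\gamma X^\circ$ are complex varieties --- trivial families over $\K$ --- so by Example~\ref{e:trivial2} their refined polynomial is $E(\init_\gamma X^\circ;uw,vw)$, a function of the two products $uw$ and $vw$ only. No sum of such terms (even weighted by powers of $uvw^2-1$) can recover the genuine three-variable dependence of $E(X_\infty^\circ;u,v,w)$. This is exactly why the refined Hodge--Deligne map $\bar E$ does \emph{not} factor through the motivic nearby fiber $\psi$ in the diagram of Section~\ref{s:motivic}: Theorem~\ref{t:comp} lives in $K_0(\Var_\C)$ and yields $E(X_\infty^\circ;u,v)$, but the Deligne weight filtration $W$ encoded in the $w$-variable is lost. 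Invoking the Steenbrink--Zucker weight spectral sequence does not help here without substantial further argument: that spectral sequence describes the $M$-filtration in terms of closed strata of a semi-stable model, and simultaneously tracking $W$ on the open hypersurface does not reduce to a simple sum over initial degenerations.

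The paper proceeds differently. It first uses Theorem~\ref{t:comp} (via Corollary~\ref{c:hyper}) together with the Borisov--Mavlyutov formula to obtain the closed form for the two-variable specialization $E(X_\infty^\circ;u,v)$. The passage to the full three-variable polynomial is then carried out by a Danilov--Khovanski{\u\i}-type algorithm (Section~\ref{ss:DK}): weak Lefschetz (Lemma~\ref{l:weaklef}) pins down the high $w$-degrees, Poincar\'e duality on a simplicial toric compactification (equations~\eqref{e:simplicialdecomp}--\eqref{e:Poincare}) pins down the low $w$-degrees, and the known specialization at $w=1$ determines the remaining middle degree. One then checks that the proposed right-hand side satisfies all three constraints --- this is precisely the combinatorial characterization of $h^*(P,\cS;u,v,w)$ recorded in Corollary~\ref{c:superimportant}. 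The combinatorial identities you mention from \cite{otherpaper} do enter, but in this indirect role rather than as a collapse of a direct sum-over-cells formula for the refined polynomial.
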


As discussed above, Theorem~\ref{t:mainhyper} immediately gives explicit combinatorial formulas for the refined limit mixed Hodge numbers and limit mixed Hodge numbers of $X^\circ$ (see Corollary~\ref{c:explicit}). In particular, we deduce that these invariants 
only depend on the pair $(P,\cS)$, and not on the specific choice of $X^\circ$.  Our results allow one to compute the refined limit Hodge-Deligne polynomial of various compactifications of $X^\circ$ but not necessarily the refined limit mixed Hodge numbers as we elaborate in Remark~\ref{r:singular}. In Example~\ref{e:stringy}, we apply our results to obtain formulas for \define{stringy invariants} associated to families of Calabi-Yau varieties. 

\begin{example}\label{e:intro2}
When $n = 2$, $X^\circ \subseteq (\K^*)^2$ may be viewed as a family of non-compact, smooth curves. Let $(P,\cS)$ denote the corresponding pair consisting of a lattice polytope in a lattice $M$ together with a lattice polyhedral subdivision. In this case, Theorem~\ref{t:mainhyper} has the following explicit description. Let $\partial P$ and $\Int(P)$ denote the boundary and interior of $P$ respectively. Then the coefficients of 
$uvw^3$ and $uv^2w^3$ in $h^*(P,\cS;u,v,w)$ are respectively given by
\[
h^*_{0,0,1}(P,\cS) =  \sum_{\substack{F \in \cS, F \nsubseteq \partial P \\\ \dim F \le 1 }} \# (\Int(F) \cap M),
\]
\[
h^*_{0,1,1}(P,\cS) =   \sum_{\substack{F \in \cS \\\ \dim F = 2 }} \# (\Int(F) \cap M), 
\]
and one can compute
\[
E(X^\circ_\infty;u,v,w) = 1 - \# (\partial P \cap M) - h^*_{0,0,1}(P,\cS)(1 + uv)w - h^*_{0,1,1}(P,\cS)(u + v)w + uvw^2.
\]
If $X$ denotes the closure of $X^\circ$ in the toric variety over $\K$ corresponding to the normal fan of $P$, then $X$ may be viewed as a family of smooth, compact curves with
\[
E(X_\infty;u,v,w) = 1 - h^*_{0,0,1}(P,\cS)(1 + uv)w - h^*_{0,1,1}(P,\cS)(u + v)w + uvw^2.
\]
When $n = 3$ and $X^\circ$ may be viewed as a family of non-compact, smooth surfaces, an explicit description of $E(X^\circ_\infty;u,v,w)$ is given by Theorem~\ref{t:mainhyper} and Example~\ref{e:smallterms}. 
\end{example}

\begin{example}
Continuing with the explicit family of curves in Example~\ref{e:concrete}, the corresponding Newton polytope $P$ is the 
convex hull of $a_0 = (0,0)$, $a_1 = (4,0)$ and $a_2 = (0,4)$. Setting $b_0 = (1,1)$, $b_1 = (2,1)$ and $b_2 = (1,2)$, the lattice 
polyhedral subdivision $\cS$ has four maximal cells: $\{ a_i, a_j, b_i, b_j \}$ for $i \ne j$ and $\{ b_0, b_1, b_2 \}$.  
By Example~\ref{e:intro2}, 
\[
E(C^\circ_\infty;u,v,w) = -11 - 3(1 + uv)w + uvw^2.
\]
\end{example}

In the case when we have a family of varieties over a punctured curve, we also give an alternative approach to Theorem \ref{t:mainhyper} via intersection cohomology making use of the pure Hodge structure on the intersection cohomology of projective varieties.  By the use of the decomposition theorem of Beilinson, Bernstein, Deligne and Gabber \cite{BBD}, one can show that for certain stratifications, intersection cohomology admits a motivic formula if one includes terms accounting for the singularities in the normal cones to strata.  This idea is used in the computation of intersection cohomology of toric varieties (see, e.g. \cite{F}), in the work of 
 Batyrev and Borisov \cite{BBMirror}, and is developed in greater generality by Cappell, Maxim, and Shaneson \cite{IntCoh}.  Here, we observe that a motivic formula holds for the refined limit Hodge-Deligne polynomials for intersection cohomology with compact support (Theorem~\ref{t:sumoverstrata}), and deduce that the following corollary is equivalent to Theorem~\ref{t:mainhyper} (see Lemma~\ref{l:equiv}). 
 The degree of $h^*(P,\cS;u,v,w)$ as a polynomial in $w$ is at most $\dim P + 1$, and we denote the coefficient of 
$w^{\dim P + 1}$ by  $\lc(P, \cS; u,v)$ and call it the \define{local limit mixed $h^*$-polynomial}. 
 
 \begin{corollary}\label{c:mainintersection}
 Let $\K = \C(t)$ and 
let $X^\circ \subseteq (\K^*)^n$ be a sch\"{o}n hypersurface, with associated Newton polytope and polyhedral subdivision $(P,\cS)$ and $\dim P = n$. Let $X$ denote the closure of $X^\circ$ in the projective toric variety over $\K$ corresponding to the normal fan of $P$. Then the refined limit Hodge-Deligne polynomial associated to the intersection cohomology of $X$ is given by
\[
uvw^2E_{\inter}(X_\infty;u,v,w) = uvw^2E_{\inter,\Lef}(P;uvw^2) + (-1)^{\dim P + 1}\lc(P,\cS;u,v)w^{\dim P + 1},
\]
where 
\[
(t - 1)E_{\inter,\Lef}(P;t) = t^{\dim P}g([\emptyset,P]^*;t^{-1})- g([\emptyset,P]^*;t)
\]
is defined in terms of Stanley's g-polynomial (see Definition~\ref{d:g}). 
\end{corollary}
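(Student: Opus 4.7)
The strategy is to combine the motivic decomposition of intersection cohomology provided by Theorem~\ref{t:sumoverstrata} with Theorem~\ref{t:mainhyper}, and then to verify via the equivalence Lemma~\ref{l:equiv} that the resulting expression matches the claimed closed form on the right-hand side.

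First, the closure $X$ inherits a stratification from the torus orbit decomposition of the ambient projective toric variety: each face $F$ of $P$ corresponds to a torus orbit $T_F$ of dimension $\dim F$, and $X \cap T_F$ is a sch\"on hypersurface in $T_F$ whose Newton polytope is $F$ equipped with the induced polyhedral subdivision $\cS|_F$. The motivic formula of Theorem~\ref{t:sumoverstrata} should then express $E_{\inter}(X_\infty; u,v,w)$ as a weighted sum
\[
E_{\inter}(X_\infty; u,v,w) = \sum_{F \le P} c_{F,P}(uvw^2) \cdot E((X \cap T_F)_\infty; u,v,w),
\]
where the weight $c_{F,P}(t)$ is built from Stanley's $g$-polynomial of the interval $[F, P]$ in the face lattice of $P$, encoding the contribution of the toric singularities transverse to $T_F$ as dictated by the decomposition theorem of Beilinson, Bernstein, Deligne, and Gabber.

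Next, I would apply Theorem~\ref{t:mainhyper} to each stratum, writing $uvw^2 E((X \cap T_F)_\infty; u,v,w)$ as $(uvw^2 - 1)^{\dim F}$ plus a signed $h^*(F, \cS|_F; u,v,w)$ term. Substituting into the sum splits $uvw^2 E_{\inter}(X_\infty; u,v,w)$ into two pieces. The first piece, assembled from the $(uvw^2 - 1)^{\dim F}$ contributions weighted by the $g$-polynomial factors, should reassemble—via Stanley's formula for the intersection cohomology Poincar\'e polynomial of a projective toric variety—into $uvw^2 E_{\inter,\Lef}(P; uvw^2)$. The second, assembled from the $h^*(F, \cS|_F; u,v,w)$ contributions, should collapse to $(-1)^{\dim P + 1}\lc(P,\cS;u,v) w^{\dim P+1}$.

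The main obstacle lies in this second collapse: one must show that after summing the refined limit mixed $h^*$-polynomials of all faces against the $g$-polynomial weights, every term of $w$-degree strictly less than $\dim P + 1$ cancels and only the top-degree coefficient $\lc(P,\cS;u,v)$ survives. This cancellation is precisely what the equivalence Lemma~\ref{l:equiv} packages, and its verification rests on the combinatorial identities between $g$-polynomials of face intervals and the defining local/global $h^*$-structure developed in \cite{otherpaper}. Once that lemma is granted, the corollary follows from a direct comparison of coefficients with Theorem~\ref{t:mainhyper}.
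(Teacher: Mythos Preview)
Your proposal is correct and follows essentially the same route as the paper: apply Theorem~\ref{t:sumoverstrata} to write $E_{\inter}(X_\infty;u,v,w)$ as $\sum_{Q} E((X^\circ_Q)_\infty;u,v,w)\,g([Q,P]^*;uvw^2)$, substitute Theorem~\ref{t:mainhyper} stratum by stratum, and then invoke Lemma~\ref{l:equiv} to conclude. One small sharpening: the cancellation you identify as the ``main obstacle'' does not require further input from \cite{otherpaper} beyond Definition~\ref{d:new}; it follows directly from expanding each $h^*(Q,\cS|_Q;u,v,w)$ in terms of local limit mixed $h^*$-polynomials and then applying Stanley's Eulerian inversion formula (Theorem~\ref{t:Eulerianinverse}) and the defining recursion for $g$ (Definition~\ref{d:g}), exactly as in the proof of Lemma~\ref{l:equiv}.
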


From the above corollary, one may deduce an explicit formula for the corresponding refined limit mixed Hodge numbers for intersection cohomology (see Corollary~\ref{c:intexplicit}). 
When $\K = \C(t)$,  we also present an alternative proof of Corollary~\ref{c:mainintersection} and hence of Theorem~\ref{t:mainhyper} using intersection cohomology. 
This proof extends the ideas of  Batyrev and Borisov's original proof of a formula for the usual mixed Hodge numbers of $X^\circ_{\gen}$  in \cite{BBMirror}.

\begin{example}\label{e:intro3}
As in Example~\ref{e:intro2}, let $X^\circ \subseteq (\K^*)^n$ be a sch\"{o}n hypersurface. Let $(P,\cS)$ denote the corresponding pair consisting of a lattice polytope together with a lattice polyhedral subdivision. Let $X$ denote the closure of $X^\circ$ in the toric variety over $\K$ corresponding to the normal fan of $P$. 
When $n = 2$, $X$ may be viewed as a family of compact, smooth curves, and $E_{\inter}(X_\infty;u,v,w) = E(X_\infty;u,v,w)$ is computed in Example~\ref{e:intro2}. When $n = 3$, 
$X$ may be viewed as a family of compact, possibly singular surfaces, and $E_{\inter}(X_\infty;u,v,w)$ is given explicitly by Corollary~\ref{c:mainintersection}, Example~\ref{e:smallterms} and 
the computation $E_{\inter,\Lef}(P;t) = 1+ \mu t + t^2$, where $\mu + 3$ is the number of facets of $P$.
\end{example}

%We provide a detailed description of the geometric invariants above in Section~\ref{s:motivic}. 
 \subsection{Organization of the paper} 

     This paper is structured as follows. In Section \ref{s:proof}, we review necessary background from tropical geometry, % and from our previous paper, 
     introduce our invariant $\psi_{(X^\circ,\Sigma,\Delta)}$ of partial compactifications of subvarieties of algebraic tori, and prove Theorem~\ref{t:comp}.  In Section \ref{s:motivic}, we discuss motivic invariants and the refined limit Hodge-Deligne polynomial.  Section \ref{s:combinatorial} introduces combinatorial invariants whose properties are established in \cite{otherpaper} and which are related to the refined limit Hodge-Deligne polynomial of hypersurfaces of algebraic tori in Section \ref{s:hypersurfaces}.  In Section~\ref{s:intcoh}, we derive a formula for the limit Hodge-Deligne polynomial of the intersection cohomology of a sch\"{o}n subvariety and use it to give an alternative proof of Theorem \ref{t:mainhyper}.

\medskip
\noindent
{\it Notation and conventions.}  
If $\P$ is a toric variety, then we let  $\P_\C$, $\P_\K$ and $\P_\O$ denote the corresponding toric variety over $\C$ and $\K$, and corresponding toric scheme over $\O$ respectively. 

\medskip
\noindent
{\it Acknowledgements.} 
We would like to thank Mark Andrea de Cataldo, Laurentiu Maxim and Gregory Pearlstein for valuable discussions.  Particular thanks should go to Patrick Brosnan who suggested the relevant mixed Hodge theory framework and to Benjamin Nill who introduced the authors to Stanley's subdivision theory and advocated its importance in establishing the relevant combinatorial theory.

\section{A tropical approach to the motivic nearby fiber %Proof of  Theorem~\ref{t:comp}
} \label{s:proof}

In this section, we present the proof of Theorem~\ref{t:comp}. %We will see that the result follows by induction on dimension from \cite[Theorem~5.1]{KatzStapledon}. 
We will continue with the notation of the introduction. In particular, $X^\circ \subseteq (\K^*)^n$ is a sch\"on subvariety, and $\Sigma$ is a rational polyhedral structure on $\Trop(X^\circ)$ that extends to a polyhedral subdivision of $\R^n$.   Such a $\Sigma$ exists by \cite[Prop 6.8]{LuxtonQu}.

We first recall the following toric interpretation of the initial degenerations of $X^\circ$, and refer the reader to \cite[Section~1]{HelmKatz} for details. 
One can define a toric scheme $\P(\Sigma)_\O$ over $\O$ from $\Sigma$. 
For a cell $\gamma$ of $\Sigma$, let $\rec(\gamma)$ denote the recession cone of $\gamma$. 
That is, $\rec(\gamma)$ is the unique cone such that there exists a bounded polytope $Q$ satisfying $\gamma = Q + \rec(\gamma)$.  
  By \cite{BGS}, the set of recession cones of $\Sigma$ forms the recession fan $\Delta$.
Note that the bounded cells of $\Sigma$ are precisely the cells whose recession cone is $\{0\}$.   
The generic fiber of $\P(\Sigma)_\O$ is the toric variety $\P(\Delta)_\K$.  
For cones $\tau$ in $\Delta$, let $U_\tau$ be the corresponding torus orbit of $\P(\Delta)_\K$.  
Cells $\gamma\in\Sigma$ correspond to torus orbits $U_\gamma$ contained in the central fiber of $\P(\Sigma)_\O$. 
We define $T_\gamma$ to be the torus fixing $U_\gamma$ pointwise. 
%There is an inclusion-reversing correspondence $F\mapsto \overline{U}_F$ between faces and orbit closures.  %In fact, each orbit closure $\overline{U}_F$ is the toric variety associated to the fan $\Delta_F=\Star_\Sigma(F)/(N_F)_\R$.  
%The torus orbits $U_F$ give a decomposition of the central fiber of $\P$.  

Let $\X$ denote the closure of $X^\circ$ in $\P(\Sigma)_\O$, and let $X_{\Delta}$ and $X_0$ denote the generic fiber and central fiber of $\X$ respectively.  
For cones $\tau$ in $\Delta$, let $X_\tau^\circ = X_{\Delta} \cap U_\tau$, so that $X_{\Delta}$ admits a stratification $X_{\Delta} = \cup_{\tau \in \Delta} X^\circ_{\tau}$.  
Similarly, for cells $\gamma$ in $\Sigma$, if we let $X_\gamma^\circ=\X\cap U_\gamma$, then $X_0 = \cup_{\gamma \in \Sigma} X^\circ_{\gamma}$.
%The central fiber $X_0$ of $\X$ can be written as a union of locally closed subschemes labelled by faces of $\Sigma$.
%Let $X_F^\circ=\X\cap U_F$.  %Let $X_F$ denote the closure of $X_F^\circ$ in $\X$.  

For each cone $\tau$ in $\Delta$, let $\R_\tau$ denote the linear span of $\tau$ and consider the projection $\pi_\tau: \R^n \rightarrow \R^n/\R_\tau$. Then $X_\tau^\circ$ is a sch\"on subvariety of  $U_\tau$, and its corresponding tropical variety has a polyhedral structure $\Sigma_\tau = \{ \pi_\tau(\gamma) \mid \tau \subseteq \rec(\gamma)  \}$.
% whose faces are obtained via 
%projection of the faces of $\Sigma$ whose corresponding recession cones are contained in $\tau$. 
In particular, the bounded cells of $\Sigma_\tau$ correspond to the cells of $\Sigma$ with recession cone $\tau$, and the recession fan $\Delta_\tau$ of $\Sigma_\tau$ is the \emph{star-quotient} of $\Delta$ by $\tau$ (see  \cite[Section~3.1]{FulIntroduction}). 

For $w$ in the relative interior of $\gamma$, the initial degeneration $\init_w X^\circ$ depends only on $\gamma$ because the closure of $X$ in $\P(\Sigma)_\O$ is a tropical compactification by \cite[Theorem~1.5]{LuxtonQu}.  Moreover, $\init_w X^\circ$ is invariant under the torus $T_\gamma$.   Moreover, there is a non-canonical isomorphism
\[\init_w X^\circ\cong T_\gamma\times X_\gamma^\circ. %(\X\cap U_F).
\]
Then we have
\begin{equation}\label{e:toruscalc}
[\init_w X^\circ] = [T_\gamma][X_\gamma^\circ] = [(\C^*)^{\dim \gamma}] [X_\gamma^\circ] = [\C^*]^{\dim \gamma}[X_\gamma^\circ]  = (\L - 1)^{\dim \gamma}[X_\gamma^\circ].  
\end{equation}

%If $F$ is a top-dimensional cell of $\Sigma$, then $m(F)$, the {\em multiplicity} of $F$ is the length of the zero-dimensional scheme $\init_w X^\circ/T_F$.  With these multiplicities $\Sigma$ is a balanced weighted rational polyhedral complex. 
%Let $\Delta$ denote the recession fan of $\Sigma$.  Then $X_{\K}\subset X(\Sigma)$ is stratified by its intersection with torus orbits.  For $\tau\in\Delta$, let $X_\tau$ be the intersection of $X$ with the orbit closure $V(\tau)$.  Let $X_\tau^\circ$ be the intersection of $X_\tau$ with the dense open torus of $V(\tau)$.  %Note that $V(\tau)$ is the toric variety associated to the star-quotient $\Delta_\tau$ as noted in \cite[Section~3.1]{FulIntroduction}.

%If we set
%\[
%\psi_{(X^\circ,\Sigma,\{0\})} :=   \sum_{\substack{F\in\Sigma\\F \operatorname{bounded}}}\, [X^\circ_F](1 - \L)^{\dim F}. 
%\]

For any subfan $\Delta'$ of $\Delta$, we define 
\[
\psi_{(X^\circ,\Sigma,\Delta')} :=   \sum_{\substack{\gamma\in\Sigma\\\ \rec(\gamma) \in \Delta'}}\, [X^\circ_\gamma](1 - \L)^{\dim \gamma-\dim(\rec(\gamma))}.
\]
It follows from \eqref{e:toruscalc} that 
%We conclude that  
Theorem~\ref{t:comp} is equivalent to the following:
\begin{equation}\label{e:open}
\psi_{X^\circ} =  \psi_{(X^\circ,\Sigma,\{0\})}. 
\end{equation}
%For each cone $\tau$ in $\Delta$, let $\{\tau\}$ denote the corresponding subfan.
It follows from the above description that for each cone $\tau$ in $\Delta$,
\[
\psi_{(X_\tau^\circ,\Sigma_\tau,\{0\})} = \sum_{\substack{\gamma\in\Sigma\\\ \rec(\gamma) = \tau}}\, [X^\circ_\gamma](1 - \L)^{\dim \gamma-\dim \tau}.
\]
Hence, we have the relation
\begin{equation}\label{e:summotivic}
\psi_{(X^\circ,\Sigma,\Delta')} = \sum_{\tau \in \Delta'} \psi_{(X_\tau^\circ,\Sigma_\tau,\{0\})}.
\end{equation}

A priori, $\psi_{(X^\circ,\Sigma,\{0\})}$ depends on $\Sigma$.  Our first step is to show that is independent of $\Sigma$ below. We first recall the following lemma that was proved in \cite[Lemma~3.4]{KatzStapledon}. We provide a more concise proof below.

\begin{lemma} \label{l:euler}
Let $P$ be a $n$-dimensional polytope and let $Q$ be a proper (possibly empty) face of $P$. Let $\cS$ be a polyhedral subdivision of $P$.  Then
\[\sum_{ \substack{  F \in \cS, F\cap Q=\emptyset\\\Int(F) \subseteq \Int(P)}} (-1)^{\dim F}=\left\{\begin{array}{cl} (-1)^d & \text{if } Q =  \emptyset  \\ 0 & \text{otherwise}. \end{array}\right.\]
\end{lemma}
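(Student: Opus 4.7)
The plan is to interpret the left-hand side as a compactly supported Euler characteristic. Let $\mathcal{F}_Q := \{F \in \cS : F \cap Q = \emptyset,\ \Int(F) \subseteq \Int(P)\}$; since the open cells $\Int(F)$ are pairwise disjoint copies of $\R^{\dim F}$, the sum equals $\chi_c(Z)$ where $Z := \bigsqcup_{F \in \mathcal{F}_Q} \Int(F)$. Because the relative interiors of the cells of $\cS$ partition $P$, and each cell is either contained in
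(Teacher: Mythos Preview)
Your proposal breaks off mid-sentence, so only the setup is visible. That setup is correct and is essentially the paper's own starting point in different language: writing the sum as $\chi_c(Z)$ with $Z=\bigsqcup_{F\in\mathcal{F}_Q}\Int(F)$ is the same as the paper's identification of the sum with the relative Euler characteristic $\chi(P^\bullet,\partial P^\bullet)$, where $P^\bullet=\bigcup_{F\cap Q=\emptyset}F$ and $\partial P^\bullet=P^\bullet\cap\partial P$, since $Z=P^\bullet\setminus\partial P^\bullet$. The case $Q=\emptyset$ is then immediate: $Z=\Int(P)\cong\R^n$ and $\chi_c(Z)=(-1)^n$.

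The substantive work, which your fragment never reaches, is the case $Q\neq\emptyset$. Additivity of $\chi_c$ over the cell decomposition by itself does not give $\chi_c(Z)=0$; one still needs some topological identification of $Z$, or equivalently of $P^\bullet$ and $\partial P^\bullet$. The paper supplies this by constructing explicit cell-by-cell retractions showing that the inclusions $P^\bullet\hookrightarrow P\setminus Q$ and $\partial P^\bullet\hookrightarrow\partial P\setminus Q$ are homotopy equivalences, and then noting that radial projection from a point in the relative interior of $Q$ makes $\partial P\setminus Q\hookrightarrow P\setminus Q$ a homotopy equivalence as well. Hence $\partial P^\bullet\hookrightarrow P^\bullet$ induces an isomorphism on homology and $\chi(P^\bullet,\partial P^\bullet)=0$. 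Unless you provide an argument of this kind, or some alternative way to recognize $Z$ as a space with vanishing $\chi_c$, the proof is incomplete at precisely the nontrivial step.
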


\begin{proof}

Let $P^\bullet$ and $\partial P^\bullet$ be subcomplexes given as follows:
\[P^\bullet=\bigcup_{ \substack{  F \in \cS \\  F \cap Q = \emptyset}} F,\ 
\partial P^\bullet=\bigcup_{ \substack{  F \in \cS, \ F\subset\partial P\\  F \cap Q = \emptyset}} F.\]
Now, the quantity on the left in the statement is the relative Euler characteristic $\chi(P^\bullet,\partial P^\bullet)$.  If $Q=\emptyset$, this becomes $\chi(P,\partial P)=\chi(B^n,S^{n-1})$ in which case the theorem holds.

Now suppose that $Q\neq \emptyset$.  It suffices to show that the inclusion $\partial P^\bullet\hookrightarrow P^\bullet$ induces an isomorphism in homology.  Consider the commutative diagram
\[\xymatrix{
\partial P^\bullet \ar@{^(->}[r]\ar@{^(->}[d]&P^\bullet\ar@{^(->}[d]\\
\partial P\setminus Q\ar@{^(->}[r]&P\setminus Q 
}\]
The inclusion $P^\bullet\hookrightarrow P\setminus Q$ is a homotopy equivalence.  We give its homotopy inverse.  Let 
\[P^\bullet_i=P^\bullet\cup\bigcup_{ \substack{  F \in \cS \\  \dim(F)\leq i,\ F \cap Q \neq \emptyset}} (F\setminus Q).\]
Let $r_i:P^\bullet_{i+1}\rightarrow P^\bullet_{i}$ be given as the identity on cells $F$ disjoint from $Q$ and given by $r_F:F\setminus Q\rightarrow\partial F\setminus Q$ for cells intersecting $Q$ where $r_F$ is projection away from some point $x_F$ in the relative interior of $F\cap Q$.  
We define $r:P\setminus Q\rightarrow P^\bullet$ to be the composition $r_0\circ\dots\circ r_d:P\setminus Q=P^\bullet_{d}\rightarrow P^\bullet_0=P^\bullet.$

The inclusion $\partial P^\bullet\hookrightarrow \partial P\setminus Q$ is also a homotopy equivalence.  Its inverse is defined similarly to the map above.  
Finally, $\partial P\setminus Q\hookrightarrow P\setminus Q$ is a homotopy equivalence whose inverse can be given by projection from a point in the relative interior of $Q$.  Since these three maps induce isomorphisms in homology, so must $\partial P^\bullet\hookrightarrow P^\bullet$.
\end{proof}

The following lemma is analogous to \cite[Theorem~3.6]{KatzStapledon}. 

\begin{lemma} \label{l:independence}
The expression $\psi_{(X^\circ,\Sigma,\{0\})}$ above is independent of the choice of rational polyhedral structure $\Sigma$ on $\Trop(X^\circ)$.
\end{lemma}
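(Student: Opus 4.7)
The plan is to reduce invariance of $\psi_{(X^\circ, \Sigma, \{0\})}$ to a combinatorial identity about polyhedral subdivisions, and then to dispatch that identity using Lemma~\ref{l:euler} together with a truncation trick. Any two rational polyhedral structures on $\Trop(X^\circ)$ admit a common rational polyhedral refinement, so by symmetry it suffices to prove that $\psi_{(X^\circ, \Sigma', \{0\})} = \psi_{(X^\circ, \Sigma, \{0\})}$ whenever $\Sigma'$ refines $\Sigma$.

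The key geometric input is that $[\init_w X^\circ]$ depends only on the cell of $\Sigma$ whose relative interior contains $w$. Consequently, for every $\gamma' \in \Sigma'$ sitting inside the unique minimal cell $\gamma \in \Sigma$ that contains it, one has $\Int(\gamma') \subseteq \Int(\gamma)$ and $[\init_{\gamma'} X^\circ] = [\init_\gamma X^\circ]$. Combined with \eqref{e:toruscalc}, this gives
\[
[X^\circ_{\gamma'}](1 - \L)^{\dim \gamma'} = (-1)^{\dim \gamma'}[\init_\gamma X^\circ].
\]
Regrouping the defining sum of $\psi_{(X^\circ, \Sigma', \{0\})}$ by the coarse cell $\gamma \in \Sigma$ containing each $\gamma'$ factors out $[\init_\gamma X^\circ]$ and reduces the statement to the purely combinatorial identity
\[
\sum_{\substack{\gamma' \in \Sigma' \text{ bounded} \\ \Int(\gamma') \subseteq \Int(\gamma)}} (-1)^{\dim \gamma'}
=
\begin{cases}
(-1)^{\dim \gamma} & \gamma \text{ bounded,}\\
0 & \gamma \text{ unbounded,}
\end{cases}
\]
for each $\gamma \in \Sigma$.

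When $\gamma$ is bounded this is immediate from Lemma~\ref{l:euler} applied with $P = \gamma$, $Q = \emptyset$, and $\cS$ the polyhedral subdivision of $\gamma$ induced by $\Sigma'$. The unbounded case is where the main work lies, and I would handle it by a truncation: choose a linear functional $\ell$ that is strictly positive on $\rec(\gamma) \setminus \{0\}$, together with $c \gg 0$ large enough that $\ell < c$ on every one of the finitely many bounded cells $\gamma' \in \Sigma'$ contained in $\gamma$. Set $P = \gamma \cap \{\ell \le c\}$, a bounded polytope of the same dimension as $\gamma$, and $Q = \gamma \cap \{\ell = c\}$, a non-empty proper face of $P$. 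Refining $\Sigma'|_\gamma$ along the hyperplane $\{\ell = c\}$ yields a polyhedral subdivision $\cS$ of $P$. The crucial observation is that any $\gamma' \in \Sigma'$ contained in $\gamma$ with non-trivial recession cone has $\ell$ unbounded on it, so its truncation necessarily meets $\{\ell = c\} \subseteq Q$; hence the only cells $F$ of $\cS$ satisfying both $F \cap Q = \emptyset$ and $\Int(F) \subseteq \Int(P)$ are precisely the bounded cells of $\Sigma'$ sitting inside $\Int(\gamma)$. Applying Lemma~\ref{l:euler} to $(P, Q, \cS)$ then produces the required vanishing and finishes the reduction.
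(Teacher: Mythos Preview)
Your proof is correct and follows the same overall strategy as the paper: pass to a common refinement, use constancy of initial degenerations on relative interiors of cells together with \eqref{e:toruscalc} to reduce to the displayed combinatorial identity, and then invoke Lemma~\ref{l:euler}. The only difference lies in how that identity is matched to Lemma~\ref{l:euler}. You split into the bounded case (where $P=\gamma$, $Q=\emptyset$) and the unbounded case (where you truncate $\gamma$ by a half-space $\{\ell\le c\}$ transverse to its recession cone and take $Q$ to be the new facet $\gamma\cap\{\ell=c\}$). The paper instead handles both cases at once by going up one dimension: it forms the cone $C_\gamma$ over $\gamma\times\{1\}$ in $\R^n\times\R$, slices it by an affine hyperplane $H$ to obtain a polytope $P$ not containing the origin, and sets $Q=P\cap(\R^n\times\{0\})$; then $Q$ is empty precisely when $\gamma$ is bounded, and the cells of the induced subdivision meeting $Q$ are exactly those coming from unbounded cells of $\Sigma'$ in $\gamma$. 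Your truncation is arguably more direct and stays in the ambient $\R^n$, while the coning construction buys a uniform treatment of the two cases in a single application of Lemma~\ref{l:euler}. Both arguments implicitly use that the cells of $\Sigma$ have pointed recession cones.
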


\begin{proof}
Suppose $\Sigma'$ is a rational polyhedral structure on $\Trop(X^\circ)$ corresponding to a toric scheme $\P(\Sigma')_\O$.  After taking a common refinement, we can suppose that $\Sigma'$ is a refinement of $\Sigma$. This induces a proper morphism of toric schemes 
$\P(\Sigma')_\O \rightarrow \P(\Sigma)_\O$. If $\gamma'$ is a cell of $\Sigma'$, then the relative interior of $\gamma'$ lies in the relative interior $\Int(\gamma)$ of a unique cell $\gamma$ of $\Sigma$. By standard toric geometry \cite[Sec 2.1]{FulIntroduction},  the corresponding morphism of tori $U_{\gamma'} \rightarrow U_\gamma$ factors as 
\[
U_{\gamma'} \cong U_\gamma \times (\C^*)^{\dim \gamma - \dim \gamma'} \rightarrow U_\gamma,
\]
where the second map is projection onto the first coordinate. By \cite[Proposition~7.6]{LuxtonQu}, $X^\circ_{\gamma'}$ is the pullback of $X^\circ_\gamma$, and hence $[X^\circ_{\gamma'}] = [X^\circ_\gamma](\L - 1)^{\dim \gamma - \dim \gamma'}$. We compute 
\[
\sum_{\substack{\gamma'\in\Sigma'\\\gamma' \operatorname{bounded}}}\ \, [X^\circ_{\gamma'}] \, (1 - \L)^{\dim \gamma'}
 =  \sum_{\gamma \in \Sigma} [X^\circ_\gamma]  (1 - \L)^{\dim \gamma}  \left(\sum_{\substack{\gamma' \in \Sigma', \gamma' \operatorname{bounded}\\ \Int(\gamma') \subseteq \Int(\gamma) }} (-1)^{\dim \gamma - \dim \gamma'}\right).
\]
Hence, it is enough to show that
\[
 \sum_{\substack{\gamma' \in \Sigma', \gamma' \operatorname{bounded}\\ \Int(\gamma') \subseteq \Int(\gamma) }} (-1)^{\dim \gamma'}  =  \left\{\begin{array}{cl} (-1)^{\dim \gamma} & \text{if } \gamma \text{ is bounded } \\ 0 & \text{otherwise}. \end{array}\right.
\]
This follows directly from Lemma~\ref{l:euler} if we do the following:  let $C_\gamma$ be the cone over 
$\gamma \times 1$ in $\R^n \times \R$; choose $H$ to be an affine hyperplane such that $P=C_\gamma\cap H$ is a polytope not containing the origin; set $Q$ to be the intersection of $P$ with $\R^n\times \{ 0 \}$; and
let $\cS$ be polyhedral subdivision of $P$ induced by  the fan refinement of $C_\gamma$ induced by $\Sigma'$. %    $\Sigma' \cap |C_\gamma|$.
The cells in $\cS$ that intersect $Q$ correspond to unbounded cells in $\Sigma'$.
\end{proof}

Steenbrink has applied a result \cite[Theorem 5]{SteenbrinkMilnor} similar to Lemma \ref{l:independence} to study motivic Milnor fibres of function germs on toric singularities.

Since $\psi_{(X^\circ,\Sigma,\{0\})}$ is independent of the choice of $\Sigma$ by Lemma~\ref{l:independence}, after possible ramified base-extension of $\K$, it follows from \cite[Proposition~2.3]{HelmKatz} that we may choose $\Sigma$ such that $\P(\Delta)_\K$ is smooth. In this case, we may invoke the following result.

\begin{theorem}\cite[Theorem~5.1]{KatzStapledon}\label{t:old}
With the notation above, if $\P(\Delta)_\K$ is smooth, then the motivic nearby fiber $\psi_{X_{\Delta}}$ of $X_{\Delta}$ is equal to $\psi_{(X^\circ,\Sigma,\Delta)}$. 
\end{theorem}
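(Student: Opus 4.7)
The plan is to realize $\X \to \D$ as a strictly semi-stable degeneration after refinement and ramified base change, and then to read off $\psi_{X_\Delta}$ directly from the Denef-Loeser formula $\psi = \sum_I [D_I^\circ](1-\L)^{|I|-1}$. Both sides of the claimed equality are invariant under a ramified base change of $\K$ and under a refinement of $\Sigma$: the left-hand side by the standard compatibility of $\psi$ with base change (via \cite[Proposition~2.3]{HelmKatz}) and the right-hand side by the orbit-wise version of Lemma~\ref{l:independence}. I may therefore assume $\Sigma$ is unimodular: every bounded cell is a unimodular lattice simplex and every cone of $\Delta$ is generated by a basis of a sublattice. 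Under this assumption the toric scheme $\P(\Sigma)_\O \to \D$ is smooth over $\O$ with reduced simple normal crossings central fiber, and since $X^\circ$ is sch\"on, the closure $\X$ is a tropical compactification in the sense of Tevelev \cite[Theorem~1.5]{LuxtonQu} and meets every toric stratum transversally; consequently $\X \to \D$ is itself strictly semi-stable.

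The irreducible components of the central fiber of $\X$ are precisely the closures $\overline{X_v^\circ}$ indexed by vertices $v$ of $\Sigma$, and for a nonempty subset $I$ of vertices, the open deep stratum $D_I^\circ$ is nonempty exactly when $I$ is the vertex set of a bounded cell $\gamma \in \Sigma$, in which case $D_I^\circ = X_\gamma^\circ$ and $|I|-1 = \dim \gamma$. Applying the Denef-Loeser formula (which, via the ring-homomorphism extension of $\psi$ to $K_0(\Var_\K)$ and additivity over locally closed stratifications, continues to hold for a strictly semi-stable $\X \to \D$ even when the generic fiber is not proper) produces the bounded-cell contribution
\[
\sum_{\substack{\gamma \in \Sigma \\ \rec(\gamma) = \{0\}}} [X_\gamma^\circ]\,(1-\L)^{\dim \gamma}.
\]
To capture unbounded cells, stratify the generic fiber by torus orbits, $X_\Delta = \bigsqcup_{\tau \in \Delta} X_\tau^\circ$; each $X_\tau^\circ$ is sch\"on in $U_\tau$ with induced polyhedral structure $\Sigma_\tau$, whose bounded cells are exactly the cells $\gamma \in \Sigma$ with $\rec(\gamma) = \tau$. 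Running the same semi-stable analysis inside the closure of $U_\tau$ in $\P(\Sigma)_\O$ yields $\psi_{X_\tau^\circ} = \sum_{\rec(\gamma) = \tau}[X_\gamma^\circ](1-\L)^{\dim \gamma - \dim \tau}$, and additivity of $\psi$ combined with \eqref{e:summotivic} recombines all contributions into $\psi_{(X^\circ,\Sigma,\Delta)}$.

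The main obstacle is the orbit-wise semi-stable analysis: one must verify that restricting $\X$ to the closure of each toric orbit produces a strictly semi-stable degeneration combinatorially indexed by $\Sigma_\tau$, and that the extra torus factor $T_\tau$ supplies exactly the correction $(\L - 1)^{\dim \tau}$. A secondary subtlety is the lack of properness of $X_\Delta$; this is handled by invoking $\psi$ as a ring homomorphism on $K_0(\Var_\K)$ and using additivity over locally closed stratifications rather than directly applying the Denef-Loeser formula to a non-proper generic fiber.
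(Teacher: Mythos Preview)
The paper does not prove this statement; it is quoted verbatim from \cite[Theorem~5.1]{KatzStapledon} and used as a black box in the derivation of Theorem~\ref{t:comp}. Your overall strategy---reduce to a unimodular $\Sigma$, observe that $\X\to\D$ is strictly semi-stable, and read off $\psi_{X_\Delta}$ from the Denef--Loeser formula---is indeed the approach of that earlier paper, but your execution contains a genuine error.

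The mistake is the identification $D_I^\circ = X_\gamma^\circ$ for the bounded cell $\gamma$ with vertex set $I$. In fact
\[
D_I^\circ \;=\; \bigsqcup_{\substack{\gamma'\in\Sigma\\ \ver(\gamma')=I}} X_{\gamma'}^\circ,
\]
and this union typically contains \emph{unbounded} cells as well: if $\gamma' = \gamma + \tau$ for a nonzero cone $\tau\in\Delta$, then $\gamma'$ has the same vertex set as $\gamma$, and the stratum $X_{\gamma'}^\circ$ lies in $\overline{X_v^\circ}$ for every $v\in I$ but in no $\overline{X_w^\circ}$ with $w\notin I$. Consequently your Denef--Loeser computation does not produce only the bounded-cell sum $\psi_{(X^\circ,\Sigma,\{0\})}$; it already produces
\[
\psi_{X_\Delta}=\sum_{\gamma\in\Sigma}[X_\gamma^\circ](1-\L)^{|\ver(\gamma)|-1},
\]
and since for a unimodular $\Sigma$ one has $|\ver(\gamma)|-1=\dim\gamma-\dim\rec(\gamma)$ for every cell, this is exactly $\psi_{(X^\circ,\Sigma,\Delta)}$. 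The theorem follows in one stroke.

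Your second step---stratifying $X_\Delta$ by $\tau\in\Delta$ and ``running the same semi-stable analysis'' to obtain $\psi_{X_\tau^\circ}=\psi_{(X_\tau^\circ,\Sigma_\tau,\{0\})}$---is therefore unnecessary, and as written it is also circular: that equality is precisely \eqref{e:open}, which the present paper deduces \emph{from} Theorem~\ref{t:old}, not the other way around. Your two computations of $\psi_{X_\Delta}$ (the erroneous bounded-cells-only formula versus the additivity sum) are in fact inconsistent, which is a signal that the first one is wrong.
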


By Theorem~\ref{t:old}, \eqref{e:summotivic}, and induction on dimension, we have  
\[
\psi_{X_{\Delta}} = \psi_{(X^\circ,\Sigma,\Delta)} = \sum_{\tau \in \Delta} \psi_{(X_\tau^\circ,\Sigma_\tau,\{0\})} =  \psi_{(X^\circ,\Sigma,\{0\})} + \sum_{\{0\} \neq \tau \in \Delta}  \psi_{X_\tau^\circ}.
\] 
Since $X_{\Delta} = \cup_{\tau \in \Delta} X^\circ_{\tau}$, the additivity of the motivic nearby fiber and the above expression  imply that 
\[
\psi_{X^\circ} = \psi_{X_{\Delta}} - \sum_{\{0\} \neq \tau \in \Delta}  \psi_{X_\tau^\circ} = \psi_{(X^\circ,\Sigma,\{0\})}. 
\]
This completes the proof of \eqref{e:open} and hence Theorem~\ref{t:comp}.  

Using \eqref{e:summotivic}, we immediately deduce the following corollary. 

\begin{corollary}\label{c:partial}
Let $X^\circ \subseteq (\K^*)^n$ be sch\"{o}n, and let $\Sigma$ be a rational polyhedral structure on $\Trop(X^\circ)$ extending to a polyhedral subdivision of $\R^n$. For each cell $\gamma$ of $\Sigma$, let 
$\rec(\gamma)$ denote the corresponding recession cone. For any subfan $\Delta'$ of the recession fan $\Delta$ of $\Sigma$, let 
$X_{\Delta'}$ denote the closure of $X^\circ$ in the corresponding toric variety $\P(\Delta')_{\K}$. Then the  motivic nearby fiber $\psi_{X_{\Delta'}}$ is given by:
\[
\psi_{X_{\Delta'}} =   \sum_{\substack{\gamma\in\Sigma\\\ \rec(\gamma) \in \Delta'}}\, [X^\circ_\gamma](1 - \L)^{\dim \gamma-\dim(\rec(\gamma))}.
\] 
\end{corollary}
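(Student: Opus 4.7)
The plan is to recognize the displayed right-hand side as the invariant $\psi_{(X^\circ,\Sigma,\Delta')}$ already introduced earlier in the section, and then to identify it with $\psi_{X_{\Delta'}}$ using just two ingredients: the additivity of the motivic nearby fiber on a disjoint decomposition, and Theorem~\ref{t:comp} applied stratum-by-stratum inside each torus orbit of $\P(\Delta')_\K$.

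First I would invoke the standard torus-orbit stratification $X_{\Delta'} = \bigcup_{\tau \in \Delta'} X^\circ_\tau$ obtained by intersecting the closure of $X^\circ$ with the torus orbits of $\P(\Delta')_\K$. Since $\psi : K_0(\Var_\K) \to K_0(\Var_\C)$ is a ring homomorphism and in particular additive on locally closed pieces, this immediately gives
\[
\psi_{X_{\Delta'}} = \sum_{\tau \in \Delta'} \psi_{X^\circ_\tau}.
\]
Next, for each $\tau \in \Delta'$ the subvariety $X^\circ_\tau \subseteq U_\tau$ is a sch\"on closed subvariety of its ambient torus whose tropical variety carries the polyhedral structure $\Sigma_\tau = \{\pi_\tau(\gamma) \mid \tau \subseteq \rec(\gamma)\}$ recalled in the section. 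Applying Theorem~\ref{t:comp} in that setting yields $\psi_{X^\circ_\tau} = \psi_{(X^\circ_\tau,\Sigma_\tau,\{0\})}$. Combining this with the identity \eqref{e:summotivic} then gives
\[
\psi_{X_{\Delta'}} = \sum_{\tau \in \Delta'} \psi_{(X^\circ_\tau,\Sigma_\tau,\{0\})} = \psi_{(X^\circ,\Sigma,\Delta')},
\]
which is exactly the claimed formula.

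The only point requiring some care is the identification, under the bijection between bounded cells of $\Sigma_\tau$ and cells $\gamma \in \Sigma$ with $\rec(\gamma) = \tau$, of the strata appearing in $\psi_{(X^\circ_\tau,\Sigma_\tau,\{0\})}$ with the pieces $X^\circ_\gamma = \X \cap U_\gamma$ cut out inside $\P(\Sigma)_\O$. This compatibility is built into the definition of $\Sigma_\tau$ and of the various $X^\circ_\gamma$, and is precisely what \eqref{e:summotivic} records, so no additional verification is needed. The main obstacle is thus organizational rather than substantive: one just has to assemble the pieces---stratification, additivity, Theorem~\ref{t:comp} on each stratum, and \eqref{e:summotivic}---in the correct order.
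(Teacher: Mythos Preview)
Your proposal is correct and matches the paper's own argument essentially verbatim: the paper simply says ``Using \eqref{e:summotivic}, we immediately deduce the following corollary,'' which unpacks to exactly the three steps you wrote---stratify $X_{\Delta'}$ by torus orbits, use additivity of $\psi$, apply Theorem~\ref{t:comp} to each $X^\circ_\tau$, and reassemble via \eqref{e:summotivic}. You have filled in the details the paper leaves implicit, but the route is the same.
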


\begin{remark}\label{r:generalize}
Note that when $\Delta' = \{0\}$ above, we recover Theorem~\ref{t:comp}, while when $\Delta' = \Delta$, then we recover the statement of  Theorem~\ref{t:old} without the assumption that   
$\P(\Delta)_\K$ is smooth. In this way, we see that Theorem~\ref{t:comp} is a generalization of Theorem~\ref{t:old}. Note that $X_{\Delta}$ is proper, and, while the assumption that $\P(\Delta)_{\K}$  
is smooth forces $X_{\Delta}$ to be smooth, in general, $X_{\Delta}$ and $\P(\Delta)_{\K}$ may have singularities. 
\end{remark}

\begin{remark}
Note that Theorem~\ref{t:comp} implies that if $X^\circ$ is sch\"on, then the expression $\psi_{(X^\circ,\Sigma,\{0\})}$ is not only independent of $\Sigma$ (Lemma~\ref{l:independence}), but independent on the choice of embedding $X^\circ \subseteq (\K^*)^n$. We do not know a direct proof of this fact.
\end{remark}

\begin{remark}
As in \cite[Section~3]{KatzStapledon}, the definition of  $\psi_{(X^\circ,\Sigma,\{0\})}$ can be extended to the case when $X^\circ$ is not necessarily sch\"on, but  the pair $(X^\circ,\P(\Sigma)_\O)$ is \emph{tropical}. In this case, the proof of Lemma~\ref{l:independence} holds unchanged and $\psi_{(X^\circ,\Sigma,\{0\})}$ is independent of the choice of $\Sigma$. The expression $\psi_{(X^\circ,\Sigma,\Delta)}$ was called the \emph{tropical motivic nearby fiber} in \cite{KatzStapledon}.  However, one can not expect an analogue of Theorem~\ref{t:comp}, as the following example demonstrates. 

Let $G(X,Y,Z)$ be a homogeneous polynomial over $\C$ of degree $3$ whose zero locus $V(G)$ in $\P^2$ is a nodal cubic curve.  Suppose further that $G$ has the following properties
\begin{enumerate}
\item all coefficients of degree $3$ monomials in $G$ are non-zero,
\item the node of $V(G)$ lies in $(\C^*)^2\subset\P^2$, and
\item $V(G)$ intersects each coordinate lines in $3$ distinct points.
\end{enumerate}
It is possible to find such a $G$ by applying a generic element of $\operatorname{Gl}_3(\C)$ to the equation of a nodal cubic.
The tropicalization of $V(G)^\circ \subseteq (\C^*)^2$ in $\R^2$ consists of the origin and three rays in the directions $(1,0),(0,1),(-1,-1)$, each with multiplicity $3$.  Now let $H$ be a generic homogeneous polynomial of degree $3$.  Consider $F=G+tH$ considered as a homogeneous polynomial over $\K$.  Now, $V(F)$ is a smooth cubic over $\K$.  Consequently, for $t\neq 0$ sufficiently small, $V(F)_{\gen}^\circ$ is a smooth cubic over $\C$ with $9$ points removed.  By construction, the tropicalization of $V(F)^\circ \subseteq (\K^*)^2$ is the same as that of $V(G)^\circ$.  %Moreover, the standard polyhedral structure on the tropicalization is adapted for $V(F)$.  
Moreover, $(V(F)^\circ,\P(\Sigma)_\O)$ is tropical, where $\Sigma$ is the standard polyhedral structure on the tropicalization of $V(F)^\circ$.  
There is a single bounded cell which is the origin.   Since $\init_0 V(F)^\circ=V(G)^\circ$, we have $\psi_{(V(F)^\circ,\Sigma,\{0\})} = [V(G)^\circ]$. 
%open tropical motivic nearby fiber of $V(F)\rightarrow \D^*$ is $[V(G)^\circ]$.  
This is a nodal cubic minus the $9$ points of intersection with the coordinate lines.  
Because a nodal cubic is isomorphic to a projective line with two points identified, we have $e(V(G)^\circ) = -8 \ne e(V(F)_{\gen}^\circ) = -9$ violating the final statement in Corollary~\ref{c:main}. 
\end{remark}

\section{The motivic nearby fiber and limit mixed Hodge structures}\label{s:motivic}

As observed in the introduction, by composing the motivic nearby fiber \eqref{e:mmap} with a motivic invariant over $\C$, we obtain a motivic invariant over $\K$ to which we can apply Theorem~\ref{t:comp}. 
In this section, we introduce some known results from the theory of  limit mixed Hodge structures. We recommend  \cite{Peters}  and \cite{PSMixed} as references. 
The theory was developed by many authors including Deligne, Katz, %Landman~\cite{LanPicard}, 
Clemens~\cite{CleDegeneration}, Schmid~\cite{SchVariation}, 
Steenbrink~\cite{SteLimits}, and Saito \cite{Saito}.

Throughout this section, if a complex vector space $B$ admits a mixed Hodge structure \cite{PSMixed} with corresponding decomposition
%$\{ H^{p,q}(B) \}_{p,q}$, 
\[
B \cong \bigoplus_{p,q} H^{p,q}(B), 
\] 
then we write 
$h^{p,q}(B) = h^{q,p}(B) = \dim  H^{p,q}(B)$. For a sequence of such vector spaces $B_\bullet = \{ B_m \mid m \ge 0 \}$, set 
$e^{p,q}(B_\bullet) = e^{q,p}(B_\bullet) = \sum_{m} (-1)^m h^{p,q}(B_m)$.
Then the \emph{Hodge polynomial} of $B_\bullet$ is defined by
\[
E(B_\bullet) = E(B_\bullet; u,v) = \sum_{p,q} e^{p,q}(B_\bullet) u^p v^q. 
\]

\subsection{Motivic invariants over $\C$}

In  \cite{DelTheory}, Deligne proved that the  $m^{\textrm{th}}$ cohomology group  with compact supports $H^m_c  (V)$ of a complex variety $V$
admits a canonical mixed Hodge structure with decreasing filtration $F^\bullet$ called the \define{Hodge filtration} and increasing filtration $W_\bullet$ called the 
\define{Deligne weight filtration}. The set of numbers $\{ h^{p,q}(H_c^m(V)) \}_{p,q,m}$ are called the \define{mixed Hodge numbers} of $V$, and the corresponding Hodge polynomial $E(V;u,v)$ is the 
\define{Hodge-Deligne polynomial} of $V$. 
The corresponding motivic invariant over $\C$ is the \emph{Hodge-Deligne map}
\[
E:K_0(\Var_\C)\rightarrow\Z[u,v],  \: [V] \mapsto E(V;u,v).
\]
%where $E(V;u,v)$ is the \define{Hodge-Deligne polynomial} of a complex variety $V$. 

The Hodge-Deligne polynomial of $V$ specializes to the \define{$\chi_y$-characteristic} $E(V;u,1)$ of $V$. Its coefficients are alternating sums of the dimensions of the graded pieces of the Hodge filtration on 
the cohomology of $V$ with compact supports. The \define{Euler characteristic} $e(V)$ is obtained via the specialization $e(V) = E(V;1,1)$.

\begin{example}\label{e:torus}
Recall that we write $\L := [\A^1]$  in the Grothendieck ring $K_0(\Var_k)$ for any field $k$. The complex affine line has $h^{1,1}(H^2_c(\A^1)) = 1$ and all other mixed Hodge numbers equal to zero. Hence its Hodge-Deligne polynomial is
$E(\A^1) = uv$.

The $n$-dimensional torus $(\C^*)^n$ has $[(\C^*)^n] = (\L - 1)^n$ in $K_0(\Var_\C)$, $E((\C^*)^n) = (uv - 1)^n$, and 
its mixed Hodge numbers are all zero except 
\[ h^{k,k}(H_c^{n+k}((\C^*)^n)) = \binom{n}{k},\]
for all $k$.
\excise{
Consider the $n$-dimensional torus $(\C^*)^n$. In the Grothendieck ring $K_0(\Var_\C)$, 
\[ [(\C^*)^n] = [(\A^1 - \{\pt \})^n] = (\L - 1)^n,\]
and hence $E((\C^*)^n) = (uv - 1)^n$. Indeed, the mixed Hodge numbers of $Z$ are all zero except 
\[ h^{k,k}(H_c^{n+k}((\C^*)^n)) = \binom{n}{k},\]
for all $k$.
}
\end{example}

\excise{
\begin{remark}
In fact, the Hodge-Deligne map $E$ factors as 
\[ \xymatrix{K_0(\Var_\C)\ar[r]^{\chi_{\Hdg}}&K_0(\HS)\ar[r]^P&\Z[u,v]},\]
where the Hodge characteristic $\chi_{\Hdg}:K_0(\Var_\C)\rightarrow K_0(\HS)$ takes
a complex $V$ to the virtual Hodge structure $\sum_k (-1)^k H^k(V)$, and the Hodge polynomial map $P:K_0(\HS)\rightarrow\Z[u,v]$ takes a Hodge structure to its Hodge polynomial \cite[Ch. 2]{Peters}.
\end{remark}
}

\subsection{Motivic invariants over $\K$}\label{s:motivicfamily}

Recall from the introduction that we regard a variety $X$ over $\K$ as a family of complex varieties over the disc $\D^*$, and we fix a non-zero fiber $X_{\gen} := f^{-1}(t)$ for some $t \in \D^*$.  Then the cohomology
groups $H_c^m(X_{\gen})$ admit a weight filtration $M_\bullet$ called the \define{monodromy weight filtration}. %We will recall the geometric description of this filtration in the next subsection. 
We will write $H_c^m(X_{\infty})$ to denote $H_c^m(X_{\gen})$ with the mixed Hodge structure $(F^\bullet,M_\bullet)$.
The corresponding mixed Hodge numbers are denoted 
$h^{p,q}(H_c^m(X_{\infty}))$ and are called the \define{limit mixed Hodge numbers} of $X$. The corresponding Hodge polynomial is denoted  $E(X_\infty;u,v)$, and is called the  \define{limit Hodge-Deligne polynomial} of $X$. It is the composition of the motivic nearby fiber and the Hodge-Deligne map i.e. $E(X_\infty;u,v) = E(\psi_X)$. 
 It specializes to both the $\chi_y$-characteristic of $X_{\gen}$ 
and the Euler characteristic of $X_{\gen}$ (see Remark~\ref{r:specializelimit} below).

The monodromy weight filtration  encodes the action of the logarithm of the monodromy operator on the $W_\bullet$-graded pieces of $H_c^m(X_{\gen})$. We explain this statement in detail below. 

The cohomology groups $H^m_c(X_t)$  of the fibers are isomorphic as vector spaces but have a Hodge structure which varies.  Because $H^m_c(X_t)$ forms a locally trivial fiber bundle, parallel transport gives a monodromy transformation, $T: H_c^m (X_{\gen}) \rightarrow H_c^m (X_{\gen})$.  It turns out that $T$ is quasi-unipotent, that is, some multiple of $T$ is unipotent.  
After replacing $T$ by some power which corresponds to pulling back the family by a map $\D^*\rightarrow \D^*$ ramified over the puncture, we may take its logarithm, $N=\log T$, and obtain a nilpotent operator.  Moreover, the monodromy map $T$ preserves the weight filtration $W_\bullet$,   
and $N: H_c^m(X_{\infty}) \rightarrow H_c^m(X_{\infty})$ is a morphism of mixed Hodge structures of type $(-1,-1)$.

It follows that for every non-negative integer $r$, $N$ restricts to a nilpotent operator $N(r)$ on the graded piece $\Gr_r^W H_c^m (X_{\gen})$. Let $F(r)^\bullet$ and $M(r)_\bullet$ denote the filtrations on $\Gr_r^W H_c^m (X_{\gen})$ induced by $F^\bullet$ and $M_\bullet$ respectively. Then $N(r)^{r + 1} = 0$ and $M(r)_\bullet$ is the filtration obtained from $N(r)$ which determines and is determined by the Jordan block decomposition of $N(r)$.
Indeed, we may inductively define a unique increasing filtration 
\[
0 \subseteq M(r)_0 \subseteq M(r)_1 \subseteq \cdots \subseteq M(r)_{2r} = \Gr_r^W H_c^m (X_{\gen})
\]
satisfying the following properties for any non-negative integer $k$,
\begin{enumerate}
\item $N(r)( M(r)_k ) \subseteq M(r)_{k - 2}$,
\item\label{e:hard} the induced map $N(r)^k: \Gr^{M(r)}_{r + k} \Gr_r^W H_c^m (X_{\gen}) \rightarrow \Gr^{M(r)}_{r - k} \Gr_r^W H_c^m (X_{\gen})$ is an isomorphism. 
\end{enumerate}
%For example,  $M(r)_0 = \im N(r)^r$ and $M(r)_{2r - 1} = \ker N(r)^r$.  
The pair $(F(r)^\bullet,M(r)_\bullet)$ determine a limit mixed Hodge structure on $\Gr_r^W H_c^m (X_{\gen})$. Moreover, $N(r)$ is a morphism of mixed Hodge structures of type $(-1,-1)$.
We will write  $\Gr_r^W H_c^m (X_\infty)$ when referring to  $\Gr_r^W H_c^m (X_{\gen})$ with the limit mixed Hodge structure. We denote the corresponding mixed Hodge numbers by 
$h^{p,q,r}(H_c^m(X_{\infty}))$ and call them the \define{refined limit mixed Hodge numbers} of $X$.  
For each $r$, we encode the corresponding Hodge polynomial as the coefficient of $w^r$ in a polynomial $E(X_\infty;u,v,w)$ that we will call the \define{refined limit Hodge-Deligne polynomial}. 

\begin{remark}\label{r:natural}
It follows from Saito's theory of mixed Hodge modules \cite{Saito} (see \cite{Arapura} for a survey) that most natural morphisms between varieties $X$ over $\K$ give rise to morphisms of complex varieties $X_{\gen}$ that respect the filtrations $(F^\bullet,W_\bullet,M_\bullet)$. In particular, if $U \subseteq X$ is an open inclusion and $V = X \smallsetminus U$, then  the corresponding long exact sequence of cohomology with compact supports for the triple $(X_{\gen},U_{\gen},V_{\gen})$ consists of morphisms that preserve the Hodge filtration and both the Deligne and monodromy weight filtrations (c.f. proof of \cite[Lemma~14.61]{PSMixed}; see also \cite{FFVar} for the classical approach). In particular, it follows from Remark~\ref{r:specialize} that the refined limit Hodge-Deligne polynomial is a motivic invariant over $\K$. That is, we may consider the \emph{refined Hodge-Deligne map}
\[
\bar{E}:K_0(\Var_\K)\rightarrow\Z[u,v,w],  \: [X] \mapsto E(X_\infty;u,v,w).
\]
\end{remark}

\subsection{Properties of the refined limit Hodge-Deligne polynomial} We now collect some of the basic properties of the above motivic invariants over $\K$. In particular, we will give a complete description
of the following commutative diagram of ring homomorphisms 
\[\xymatrix{
K_0(\Var_\K) \ar[r]^{\bar{E}} \ar[d]^\psi  &\Z[u,v,w]  \ar[r]^{\substack{u \mapsto uw^{-1} \\ v \mapsto 1}}   \ar[d]^{w \mapsto 1}  & \Z[u,w] \ar[d]^{w \mapsto 1}  &\\
K_0(\Var_\C) \ar[r]^{E} &\Z[u,v] \ar[r]^{v \mapsto 1} & \Z[u]  \ar[r]^{u \mapsto 1}  & \Z.
}\] 

\begin{remark}
The refined limit mixed Hodge numbers have the following explicit description in terms of $(F^\bullet, W_\bullet, M_\bullet)$,
\[h^{p,q,r}(H_c^m(X_{\infty})) =\dim(\Gr_{F(r)}^p\Gr^{M(r)}_{p+q}\Gr_r^W H^m(X_\infty)).\]
If we sum over $r$, we discard the refinement by the weight filtration and obtain the following relation with the limit mixed Hodge numbers,
\begin{equation}\label{e:limitnumber}
 h^{p,q}(H_c^m(X_\infty))= \dim(\Gr_F^p\Gr^{M}_{p+q} H^m(X_\infty)) = \sum_r h^{p,q,r}(H_c^m(X_{\infty})).
\end{equation}
Similarly, if we sum over $q$,  we discard the monodromy filtration refinement and obtain mixed Hodge numbers of $X_{\gen}$,
\begin{equation}\label{e:number}
h^{p,r-p}(H_c^m(X_{\gen})) = \sum_q h^{p,q,r}(H_c^m(X_{\infty})). 
\end{equation}
Summing the refined limit mixed Hodge numbers over $q$ and $r$ gives the following relation between the limit mixed Hodge numbers and the mixed Hodge numbers of $X_{\gen}$,
\begin{equation}\label{e:relate}
\sum_q h^{p,q}(H_c^m(X_\infty)) = \sum_q h^{p,q}(H_c^m(X_{\gen})).
\end{equation}
\end{remark}
\begin{remark}\label{r:specialize}
The refined limit Hodge-Deligne polynomial  is described explicitly in terms of the refined limit mixed Hodge numbers as 
\[
E(X_\infty;u,v,w) =  \sum_{p,q,r} e^{p,q,r}(X_\infty) u^p v^q w^r,
\]
where 
\[
e^{p,q,r}(X_\infty)   = e^{q,p,r}(X_\infty)   = \sum_m (-1)^m h^{p,q,r}(H_c^m(X_{\infty})). 
\]
Using \eqref{e:limitnumber} and \eqref{e:number}, we see that the refined limit Hodge-Deligne polynomial specializes to both the 
limit Hodge-Deligne polynomial and the Hodge-Deligne polynomial of $X_{\gen}$,
\[
E(X_\infty;u,v) = E(X_\infty;u,v,1),
\]
\[
E(X_{\gen};u,w) = E(X_\infty;uw^{-1},1,w).
\]
\end{remark}

\begin{remark}\label{r:specializelimit}
We see from Remark~\ref{r:specialize} that the limit Hodge-Deligne polynomial specializes to both the $\chi_y$-characteristic of $X_{\gen}$,
\begin{equation}\label{e:xy} E(X_\infty;u,1) = E(X_{\gen};u,1),\end{equation}
and the Euler characteristic of $X_{\gen}$
\[
E(X_\infty;1,1) = e(X_{\gen}), 
\]
\end{remark}

\begin{remark}\label{r:unimodal}
With the notation above, since $N(r)$ is a morphism of mixed Hodge structures of type $(-1,-1)$, the isomorphisms \eqref{e:hard} imply that each vertical strip of the Hodge diamond of
$\Gr_r^W H_c^m (X_\infty)$ is a symmetric, unimodal sequence of non-negative integers. That is, for $0 \le k \le r$, the sequence 
$\{ h^{k + i,i,r}(H_c^m(X_{\infty})) \mid 0 \le i \le r -k \}$ is symmetric and unimodal.
\end{remark}

\begin{remark}\label{r:symmetriesrefined}
By construction, the refined limit mixed Hodge numbers are symmetric in $p$ and $q$ i.e.  $h^{p,q,r}(H_c^m(X_{\infty})) = h^{q,p,r}(H_c^m(X_{\infty}))$. It follows from 
Remark~\ref{r:unimodal} that they satisfy the additional symmetry:
\[
h^{p,q,r}(H_c^m(X_{\infty})) = h^{r-p,r-q,r}(H_c^m(X_{\infty})).
\]
In particular, the refined limit Hodge-Deligne polynomial satisfies the symmetries
\[
E(X_{\infty};u,v,w) = E(X_{\infty};v,u,w),
\]
\[
E(X_{\infty};u,v,w) = E(X_{\infty};u^{-1},v^{-1},uvw).
\]
\end{remark}

\begin{example}\label{e:trivial2}
If $V$ is a complex variety and $X = V \times_\C \K$, then $X$
%If $X$ is defined over $\C$, then it 
may be regarded as a trivial family over $\D^*$. In this case, $N$ is identically zero, $M_\bullet$ coincides with the Deligne filtration $W_\bullet$, and $E(X_\infty;u,v) = E(V;u,v)$. 
Moreover, \[
E(X_\infty;u,v,w)  = E(V;uw,vw).
\]
\end{example}

\begin{example}
If $X$ is smooth and proper, then $\Gr_r^W H^m (X_{\gen}) = 0$ unless $m = r$. In this case, the monodromy weight filtration encodes the Jordan block decomposition of $N: H^m(X_{\infty}) \rightarrow H^m(X_{\infty})$. Moreover, 
\[
E(X_\infty;u,v,w) =   \sum_{p,q,m} (-1)^m h^{p,q}(H^m(X_{\infty})) u^p v^q w^m.
\]
\end{example}

\begin{remark}\label{r:zero} %If there is only one non-zero Hodge number of $H_c^m(X_{\gen})$ for each $m$, then $N$ must vanish identically as there is no non-trivial domain or range for $N$ considered as a morphism of Hodge structures of type $(-1,-1)$.  Indeed, 
We claim that if there exists a function $\nu: \Z \rightarrow \Z$, such that $H^{p,q}(H_c^m(X_{\gen})) = 0$ for $p \ne \nu(m)$, then $N = 0$.
Indeed, by \eqref{e:relate},
\[
\sum_q h^{p,q}(H_c^m(X_\infty)) = \sum_q h^{p,q}(H_c^m(X_{\gen})) = 0 
\] 
for $p \neq \nu(m)$. Since $N: H_c^m(X_{\infty}) \rightarrow H_c^m(X_{\infty})$ is a morphism of mixed Hodge structures of type $(-1,-1)$, for any $(p,q)$,
either the source or target of the induced map $N: H^{p,q}(H_c^m(X_\infty)) \rightarrow H^{p-1,q-1}(H_c^m(X_\infty))$ is zero. 
\end{remark}

\begin{example}\label{e:torusfiber}
By Example~\ref{e:torus} and Remark~\ref{r:zero}, if $X_{\gen} \cong (\C^*)^n$, then $N = 0$. Hence, by Example~\ref{e:trivial2},
\[
E(X_\infty;u,v,w) = E((\K^*)^n;u,v,w) =  E((\C^*)^n;uw,vw) = (uvw^2 - 1)^n.
\]
\end{example}

\section{Subdivisions of lattice polytopes}\label{s:combinatorial}

In this section we gather together the relevant facts that we will need about the combinatorics of subdivisions of polytopes. Details and proofs of all statements can be found in \cite{otherpaper}. 
We say that the empty polytope has dimension $-1$. 

A \define{polyhedral subdivision} of a polytope $P\subset\R^n$ is a subdivision of $P$ into a finite number of polytopes such that the intersection of any two polytopes is a (possibly empty) face of both.  A \define{lattice polyhedral subdivision} of a lattice polytope $P$ is a polyhedral subdivision of $P$ into lattice polytopes.  A natural class of polyhedral subdivisions are the regular subdivisions.  They are induced by a height function $\omega:P\cap\Z^n\rightarrow \R$.  The cells of the subdivision are the projections 
of the bounded faces of the convex hull of $\operatorname{UH}=\{ (u, \lambda) \mid \lambda \geq\omega(u) \} \in \R^n \times \R$.  A subdivision is said to be \define{regular} if it is induced by some height function.  For more details, see \cite{Triangulations,GKZ}.
 
We first recall some definitions concerning the combinatorics of Eulerian posets.  
Consider a finite poset  $B$  containing a minimal element $\hat{0}$ and a maximal element $\hat{1}$. For any pair $z \leq x$ in $B$, we can consider the interval $[z,x] = \{ y \in B \mid z \leq y \leq x \}$. 
Assume that $B$ is \emph{graded} in the sense that for every $x \in B$, every maximal chain in 
 the interval $[\hat{0},x]$ has the same length $\rho(x)$. We call $\rho: B \rightarrow \N$ the \emph{rank function} of $B$, and call $\rho(\hat{1})$ the \emph{rank} of $B$. %We call $\rho: B \rightarrow \N$ the \emph{rank function} of $B$, and 
Then $B$ is \define{Eulerian} if every interval $[z,x]$ with $z < x$ has as many elements of odd rank as even rank.

\begin{example}\label{e:polytope}
The poset of faces of a polytope $P$ (including the empty face) is an Eulerian poset under inclusion. 
%\emph{Throughout, we will consider the empty face to have dimension $-1$. }
Then $\rho(Q) = \dim Q + 1$, for any face $Q$ of $P$. 
%If $\cS$ is a polyhedral decomposition of $P$, then the poset of faces of $\cS$ (including the empty face) is a 
%lower Eulerian poset.  
Let $\cS$ be a polyhedral subdivision of $P$, and let $F$ be  a (possibly empty) cell of $\cS$. As a poset, the \define{link} $\lk_\cS(F)$  of $F$ in $\cS$ consists of all 
cells $F'$ of $\cS$ that contain $F$ under inclusion, and we have that the interval $[F,F']$ is an Eulerian poset. 
% is a lower Eulerian poset of rank $\dim P - \dim F$.  \emph{We let $\sigma(F)$ denote the smallest face of $P$ containing
%$F$}. If $F$ is the empty face of $\cS$, then $\sigma(F)$ is the empty face of $P$. 
\end{example}

\begin{example}
If $B$ is a poset, then $B^*$ is the poset with the same elements as $B$ and all orderings reversed. In particular, $B$ is Eulerian if and only if $B^*$ is Eulerian. 
\end{example}

The \define{$g$-polynomial} of an Eulerian poset is defined recursively and was introduced by Stanley \cite[Corollary~6.7]{StaSubdivisions}. 

\begin{definition}\label{d:g}
Let $B$ be an Eulerian poset of rank $n$. If $n = 0$, then $g(B;t) = 1$. If $n > 0$, then $g(B;t)$ is the unique polynomial
of degree strictly less the $n/2$ satisfying
\[
t^{n}g(B;t^{-1}) = \sum_{x \in B} (t - 1)^{n - \rho(x)} g([\hat{0},x];t). 
\]
\end{definition}

The following theorem of Stanley giving an inversion formula for $g$ will be useful in Section \ref{s:intcoh}:

\begin{theorem}   \label{t:Eulerianinverse}\cite[Corollary~8.3]{StaSubdivisions}
If $B$ is Eulerian and has positive rank, then 
\[
\sum_{x \in B} (-1)^{\rho(x)} g([\hat{0},x];t)g([x,\hat{1}]^*;t) = \sum_{x \in B} (-1)^{\rho(x)} g([\hat{0},x]^*;t)g([x,\hat{1}];t)  =  0.
\]
\end{theorem}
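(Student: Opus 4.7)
The plan is to establish the identity
\[
G(B;t) := \sum_{x \in B} (-1)^{\rho(x)} g([\hat{0},x];t)\, g([x,\hat{1}]^*;t) = 0
\]
for every Eulerian poset $B$ of rank $n \ge 1$ by induction on $n$. Once the first equality is established, the companion identity admits an entirely parallel proof in which the roles of ``lower interval'' and ``upper interval'' are systematically exchanged (the two formulas are not literally swapped by passing to $B^*$, but the same inductive mechanism applies). For the base case $n = 1$, we have $B = \{\hat{0}, \hat{1}\}$, each $g$-polynomial appearing is equal to $1$ by Definition~\ref{d:g}, and the two terms of $G(B;t)$ cancel.

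For the inductive step, I would substitute into the factor $g([x,\hat{1}]^*;t)$ the defining recursion applied to the Eulerian poset $[x,\hat{1}]^*$ of rank $n - \rho(x)$, namely
\[
t^{\,n-\rho(x)}\, g([x,\hat{1}]^*;t^{-1}) \;=\; \sum_{y \in [x,\hat{1}]} (t-1)^{\rho(y)-\rho(x)}\, g([y,\hat{1}]^*;t).
\]
Plugging this into $t^n G(B;t^{-1})$ and swapping the order of summation produces, for each $y \in B$, the inner sum
\[
\sum_{x \in [\hat{0},y]} (-1)^{\rho(x)} (t-1)^{\rho(y)-\rho(x)}\, g([\hat{0},x];t),
\]
which, up to the sign $(-1)^{\rho(y)}$, is precisely the $h$-polynomial $h([\hat{0},y];t) := \sum_{x \in [\hat{0},y]} (t-1)^{\rho(y)-\rho(x)} g([\hat{0},x];t)$ of the Eulerian interval $[\hat{0},y]$. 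By definition of $g$ this equals $t^{\rho(y)} g([\hat{0},y];t^{-1})$, so after the inner sum is evaluated, the outer sum in $y$ consists of terms pairing $g([\hat{0},y];t^{-1})$ with $g([y,\hat{1}]^*;t)$. A second application of the defining recursion to $g([\hat{0},y];t)$, combined with the inductive hypothesis applied to the Eulerian subposets $[\hat{0},y]$ for $\hat{0} < y < \hat{1}$, should cause all intermediate contributions to collapse, leaving an identity relating $G(B;t)$ to $t^n G(B;t^{-1})$.

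The main obstacle I anticipate is not algebraic manipulation but careful bookkeeping around the degree constraint $\deg g(B;t) < n/2$ built into Definition~\ref{d:g}. This constraint is what singles out $g$ among the many polynomials satisfying the recursion, and it is only by splitting every equation cleanly into a ``low-degree half'' of degrees $< n/2$ and a palindromic ``high-degree half'' of degrees $\ge n/2$ that the relation between $G(B;t)$ and $t^n G(B;t^{-1})$ can be shown to force $G(B;t)=0$. The Eulerian hypothesis enters in two essential places: first, to guarantee that the auxiliary $h$-polynomial of every subinterval is genuinely palindromic, so that the identity $h([\hat{0},y];t) = t^{\rho(y)} g([\hat{0},y];t^{-1})$ holds; and second, via the vanishing $\sum_{x \in [z,y]} (-1)^{\rho(x)-\rho(z)} = 0$ for $z < y$, which is precisely the cancellation responsible for killing the contributions of intermediate $y$.
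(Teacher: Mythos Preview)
The paper does not prove this theorem; it is quoted from Stanley \cite[Corollary~8.3]{StaSubdivisions} and used as a black box in Section~\ref{s:intcoh}. So there is no in-paper proof to compare against, and your strategy of deriving the identity $G(B;t)=t^{n}G(B;t^{-1})$ and then killing $G$ with the degree bound $\deg G<n/2$ is a perfectly good way to supply one.

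However, your execution of the substitution step is garbled, and the induction you set up is unnecessary. When you plug the recursion for $t^{\,n-\rho(x)}g([x,\hat{1}]^*;t^{-1})$ into $t^{n}G(B;t^{-1})$, the remaining factor in each term is $t^{\rho(x)}g([\hat{0},x];t^{-1})$, \emph{not} $g([\hat{0},x];t)$ as you wrote. Consequently the inner sum you display is not the one that actually arises, and your identification of it with $(-1)^{\rho(y)}h([\hat{0},y];t)$ is false: with the sign $(-1)^{\rho(x)}$ inside, the $(t-1)$'s become $(1-t)$'s and you do not get the $h$-polynomial.

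The fix is simple and in fact shortens the argument. The correct inner sum is
\[
\sum_{x\le y}(-1)^{\rho(x)}(t-1)^{\rho(y)-\rho(x)}\,t^{\rho(x)}g([\hat{0},x];t^{-1}).
\]
Now apply the defining recursion a second time to $t^{\rho(x)}g([\hat{0},x];t^{-1})=\sum_{z\le x}(t-1)^{\rho(x)-\rho(z)}g([\hat{0},z];t)$, interchange the sums over $x$ and $z$, and use the Eulerian condition $\sum_{z\le x\le y}(-1)^{\rho(x)}=0$ for $z<y$. Only the term $z=y$ survives, giving the inner sum the value $(-1)^{\rho(y)}g([\hat{0},y];t)$. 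Hence
\[
t^{n}G(B;t^{-1})=\sum_{y}(-1)^{\rho(y)}g([\hat{0},y];t)\,g([y,\hat{1}]^*;t)=G(B;t).
\]
Since each summand in $G(B;t)$ has degree strictly less than $\rho(x)/2+(n-\rho(x))/2=n/2$, the palindromic relation $G(B;t)=t^{n}G(B;t^{-1})$ forces $G(B;t)=0$. No inductive hypothesis on proper subintervals is ever invoked; the Eulerian cancellation does all the work. The companion identity follows by the same computation with the roles of $[\hat{0},x]$ and $[x,\hat{1}]^*$ swapped.
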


We will be interested in the following example \cite[Example~7.2]{StaSubdivisions}. 

\begin{example}\label{e:link}
Let $\cS$ be a polyhedral subdivision of a polytope $P$. If $F$ is a  (possibly empty) cell of $\cS$, then the \define{$h$-polynomial} of $\lk_\cS(F)$ is defined by
\[
t^{\dim P - \dim F}h(\lk_\cS(F);t^{-1}) = \sum_{\substack{F' \in \cS \\\ F \subseteq F'}} (t - 1)^{\dim P - \dim F'} g([F,F'];t). 
\]
%When $F$ is the empty face, we will write $h(\cS,t)  = h(\lk_\cS(F),t)$. 
\end{example}

We now recall some basic Ehrhart theory. 
 Let $P$ be a non-empty lattice polytope in a lattice $M$ of rank $n$. % of dimension $n$.  %After replacing $M$ with a smaller lattice, we may and will assume that $n = \dim P$. 
 For $m\in \Z_{> 0}$, consider the function $f_P(m)=\#(mP\cap M)$.  By Ehrhart's theorem  \cite[Section~3.3]{BRComputing}, $f_P(m)$ is a polynomial of degree $\dim P$, called
 the \define{Ehrhart polynomial} of $P$.  
 It follows that we can write
\begin{equation}\label{e:Ehrhartpoly}
f_P(m)=f_0(P)+f_1(P)m+\dots+f_{n}(P)m^{n}, 
\end{equation}
where $f_i(P)\in \Z$, and
\[
1 + \sum_{m > 0} f_P(m) u^m = \frac{h^*(P;u)}{(1 - u)^{\dim P + 1}}. 
\]
where  $h^*(P;u)$ of $P$ is a polynomial of degree at most $\dim P$ called the \define{$h^*$-polynomial} of $P$ (see, for example, \cite[Section~3.3]{BRComputing}). 
Note that if $P$ is empty, then we set $f_P(m) \equiv 0$ and  $h^*(P;u) = 1$.  We have
$h^*(P;1) = (\dim P)!\vol(P)$ where $\vol(P)$ is the Euclidean volume of $P$.

These invariants play the central role in the theory of valuations on polytopes. 
The definition given below is a priori weaker than the usual definition of valuations but is equivalent as a consequence of Lemma~\ref{l:simplex}.

\begin{definition}\label{d:valuation} Let $\cp_M$ be the set of lattice polytopes for a lattice $M$ and let $G$ be a group.  A $G$-valued \define{valuation} on $\cp_M$ is a map $\phi:\cp_{M}\rightarrow G$ satisfying
\begin{enumerate}
\item \label{v:c1} If $\cS$ is a regular lattice subdivision of $P$ with top dimensional cells $P_1,\dots,P_m$, $\phi$ satisfies the inclusion/exclusion relation
\[
\phi(P) = \sum_{ \substack{ F \in \cS \\ F \nsubseteq \partial P } } (-1)^{\dim P - \dim F} \phi(F),
\]

\item \label{v:c2} $\phi(\emptyset)=0$, and 
\item \label{v:c3} $\phi(P)=\phi(UP+u)$ for $P\in\cp_{M}$, $U\in\Aut(M)$, $u\in M$.
\end{enumerate}
\end{definition}

The lemma below is non-trivial since not every lattice polytope admits a  lattice polyhedral subdivision into unimodular simplices.  This lemma is an  adaptation of \cite[Prop 19.2]{Gruber} which is stated for general lattice subdivisions. 

\begin{lemma}\label{l:simplex}
Valuations are determined by their values on unimodular simplices: if $\phi_1,\phi_2$ are valuations that are equal on unimodular simplices, then $\phi_1=\phi_2$.
\end{lemma}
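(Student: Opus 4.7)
Setting $\phi := \phi_1 - \phi_2$, which is itself a valuation (the axioms (v1)--(v3) are linear in $\phi$) that vanishes on all unimodular simplices, the goal becomes showing $\phi \equiv 0$ on $\cp_M$. The strategy has two layers: first reduce from general lattice polytopes to lattice simplices, and then from lattice simplices to unimodular ones by induction on normalized volume.

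For the first reduction, every lattice polytope $P$ admits a regular lattice triangulation $\cS$ (see \cite{Triangulations,GKZ}). Applying axiom~(v1) to such a $\cS$ expresses $\phi(P)$ as a signed sum of $\phi$-values on the interior lattice simplices of $\cS$, and the contributions from cells lying on $\partial P$ are handled by an outer induction on $\dim P$ applied to the faces of $P$. Hence it suffices to verify $\phi(\Delta) = 0$ for every lattice simplex $\Delta$.

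For the second reduction, I would induct on the normalized volume $v(\Delta) := (\dim \Delta)!\vol(\Delta) \in \Z_{>0}$, the base case $v(\Delta) = 1$ being precisely the unimodular hypothesis. For $v(\Delta) > 1$, the straightforward subcase is when $\Delta$ contains a lattice point $p$ that is not a vertex: the stellar subdivision of $\Delta$ at $p$ is a \emph{regular} lattice subdivision (lift $p$ slightly), and every top-dimensional new simplex has strictly smaller normalized volume than $\Delta$. Applying (v1) together with the inductive hypothesis (and the outer induction for lower-dimensional faces) then forces $\phi(\Delta) = 0$.

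The main obstacle is the \emph{empty} simplex case, where $v(\Delta) > 1$ but the only lattice points of $\Delta$ are its vertices; this phenomenon first appears in dimension three (e.g.\ Reeve's tetrahedra) and is exactly why the lemma requires work. My plan here is to embed $\Delta$ into an auxiliary lattice polytope $P$ that does carry non-vertex lattice points---candidates include a prism $\Delta \times [0,1]$, a sufficiently large dilate, or the union of $\Delta$ with a lattice reflection across one of its facets---and then equip $P$ with two distinct regular lattice subdivisions: one having $\Delta$ itself as a top-dimensional cell, and one built from stellar subdivisions at non-vertex lattice points of $P$. Comparing the two expressions for $\phi(P)$ given by (v1), and invoking the non-empty subcase on the cells of the second subdivision, produces a relation that expresses $\phi(\Delta)$ as a signed sum of $\phi$-values on simplices of strictly smaller normalized volume, which vanish by induction. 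Verifying that the required auxiliary $P$ and two regular subdivisions can be produced uniformly is the adaptation of \cite[Prop 19.2]{Gruber} from general to regular lattice subdivisions, and is the technical heart of the argument.
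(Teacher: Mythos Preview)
Your overall architecture matches the paper's: reduce to full-dimensional simplices via regular triangulation (with an outer induction on dimension for lower-dimensional cells), then induct on normalized volume. The gap is precisely where you flag it: the empty-simplex case. Your list of candidate auxiliary polytopes (prism, dilate, reflection) is speculative, and none of them comes with a verified pair of regular lattice subdivisions that isolates $\phi(\Delta)$ against strictly smaller-volume simplices. A prism lives in a different lattice, a dilate does not decompose into copies of $\Delta$ beyond dimension one, and a facet reflection need not be a lattice polytope. So as written the induction does not close.

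The paper's fix is both simpler and uniform: it does not split into the empty/non-empty dichotomy at all. Given a $d$-simplex $\Delta$ with normalized volume $V\ge 2$, \cite[Prop.~19.1]{Gruber} produces a lattice point $p$ (typically \emph{outside} $\Delta$) such that $\vol\bigl(\Conv(F_i\cup\{p\})\bigr)<V$ for every facet $F_i$ of $\Delta$. One then takes the auxiliary polytope to be $\Conv(\Delta\cup\{p\})$ and the two regular subdivisions to be the projections of the upper and lower hulls of the height function that is $0$ on the vertices of $\Delta$ and $1$ at $p$; since the $d+2$ lifted points are affinely independent, both hulls are simplicial. The lower subdivision has $\Delta$ as a maximal cell together with some of the $\Conv(F_i\cup\{p\})$, the upper subdivision has only the remaining $\Conv(F_i\cup\{p\})$, and equating the two expressions from axiom~(v1) writes $\Delta$, modulo lower-dimensional terms, as a signed sum of simplices of strictly smaller volume. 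This is exactly the ``two regular subdivisions of an auxiliary $P$'' you were searching for; the point is that $p$ is allowed to be exterior, which sidesteps the empty-simplex obstruction entirely.
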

\begin{proof}  
Let $\cG$ be the free Abelian group generated by convex lattice polytopes in $M$.  Let $\cH$ be the subgroup generated by the following:
\begin{enumerate}
\item For $\cS$ is a regular subdivision of $P$, 
\[P - \sum_{ \substack{ F \in \cS \\ F \nsubseteq \partial P } } (-1)^{\dim P - \dim F} \phi(F),\]
\item $\emptyset,$
\item $P-(UP+u)$ for $P\in\cp_{\Z^n}$, $U\in\Aut(M)$, $u\in M$
\end{enumerate}
We show that $\cG/\cH$ is generated by unimodular simplices.  We induct on the dimension of $M$.  Let $\cH'$ be the subgroup of $\cG$ generated by convex lattice polytopes of $M$ whose affine span is not full-dimensional.  By induction, we may suppose $\cH'/(\cH'\cap\cH)$ is generated by unimodular simplices.  Now, it suffices to show that $\cG/(\cH+\cH')$ is generated by a $d$-dimensional unimodular simplex.

First, every polytope has a regular triangulation (see, for example, \cite[Proposition~2.2.4]{Triangulations}).  Therefore, every polytope in $\cG/(\cH+\cH')$ can be written as a formal sum of lattices simplices.  It remains to show that every lattice simplex can be written as a multiple of a unimodular simplex.  We induct on the volume of the lattice simplex.  Let $P\subset M_\R$ be a $d$-dimensional lattice simplex.  If $\vol(P)=1$ then we're done.  Suppose $\vol(P)=V\geq 2$, and let $F_0,\dots,F_d$ denote the facets of $P$. By the proof of \cite[Proposition~19.1]{Gruber}, there is a point $p\in M$ such that $\vol(\Conv(F_i\cup \{p\}))<V$.  Let $\omega:\operatorname{Vert}(P)\cup\{p\}\rightarrow\R$ be the height function that is $0$ on the vertices of $P$ and $1$ on $p$.  The graph of the height function lies in $M_\R\times\R$, and because the points in the graph are affinely independent, their convex hull is a simplex.  The projections of the convex hull by $\pi:M\times\R\rightarrow M$ is $\Conv(P\cup\{p\})$.  Moreover, the projections of the upper faces or the lower faces each give regular subdivisions $\cS_{\operatorname{upper}}$, $\cS_{\operatorname{lower}}$ of $\Conv(P\cup\{p\}$.
 The top-dimensional lower faces of the convex hull are $P$ and some faces $\Conv(F_i\cup\{p\})$ for $i\in I$ for some subset $I\subset\{0,\dots,n\}$.  The top-dimensional upper faces of the convex hull are $\Conv(F_i\cup\{p\})$ for $i\not\in I$.  The subdivision relation in $\cG/(\cH+\cH')$ gives
\[\Conv(P\cup \{p\})=P+\sum_{i\in I} \Conv(F_i\cup\{p\})=\sum_{i\not\in I} \Conv(F_i\cup \{p\}).\]
Therefore, we have in $\cG/(\cH+\cH')$,
\[P=\sum_{i\in I}  \Conv(F_i\cup\{p\})-\sum_{i\in I}  \Conv(F_i\cup\{p\}).\]
This gives an expression for $P$ as a formal sum of simplices of smaller volume.
\end{proof}

Since $P\mapsto f_P(m)$ is a $\Z[m]$-valued valuation, each $f_i: P \mapsto f_i(P)$ is a $\Z$-valued valuation for $i = 0,\ldots, n$. For example,
\[f_0(P)=\begin{cases} 1 \text{ if } P\neq \emptyset\\
0 \text{ if } P=\emptyset.
\end{cases} \]
By the Betke-Kneser theorem (\cite{BetkeKneser}, \cite[Theorem~19.6]{Gruber}), $\{f_0,\dots,f_n\}$ are a $\Z$-basis for the group of all $\Z$-valued valuations of $\cp_M$.

\begin{example}\label{e:aval}
For a fixed non-negative integer $m$, we have a valuation
\[
P \mapsto f_P(m) = f_0(P)+f_1(P)m+\dots+f_{n}(P)m^{n}. 
\]
\end{example}

The \define{local $h^*$-polynomial} $\lc(P;u)$ of $P$ was introduced by Stanley in \cite[Example~7.13]{StaSubdivisions}, generalizing the definition of Betke and McMullen in the case of a simplex \cite{BMLattice},
and was independently introduced by Borisov and Mavlyutov in \cite{BMString}, 
\begin{equation*}%\label{e:localdef}
\lc(P; u) =  \sum_{Q \subseteq P} (-1)^{\dim P - \dim Q}h^*(Q;u)g([Q,P]^*;u). 
\end{equation*}

Our main combinatorial invariants are introduced below, and first appeared in \cite[Sections~7-9]{otherpaper}. 
An explicit geometric description of these invariants is provided in Corollary~\ref{c:explicit}. 

\begin{definition}\label{d:new}
Let $\cS$ be a lattice polyhedral subdivision  of a lattice polytope $P$. Then the \define{limit mixed $h^*$-polynomial} of $(P,\cS)$ is
\[
h^*(P,\cS;u,v) := \sum_{F \in \cS} v^{\dim F + 1}\lc(F; uv^{-1})  h(\lk_\cS(F);uv).
\]
The \define{local limit mixed $h^*$-polynomial} of $(P,\cS)$ is
\[
\lc(P, \cS; u,v) :=  \sum_{Q \subseteq P} (-1)^{\dim P - \dim Q}h^*(Q, \cS|_Q;u,v) g([Q,P]^*;uv).
\]
The \define{refined limit mixed $h^*$-polynomial} of $(P,\cS)$ is
\[
h^*(P,\cS;u,v,w) = \sum_{Q \subseteq P} w^{\dim Q + 1}\lc(Q, \cS|_Q;u,v) g([Q,P];uvw^2).
\]
If $\cS$ is the trivial subdivision of $P$, with cells of $\cS$ given by the faces of $P$,  then
we write $h^*(P;u,v) = h^*(P,\cS;u,v)$ and call the polynomial the
\define{mixed $h^*$-polynomial}.
If $P$ is empty, then $h^*(P,\cS;u,v,w) = h^*(P,\cS;u,v) =  \lc(P, \cS;u,v)  = 1$.  
\end{definition}

%%%%%%%%%%%%%%%%%%%%%%%%%%%%%%%%%

%%%%%%%%%%%%%%%%%%%%%%%%%%%%%%%%%

%The invariants above satisfy the following properties that are proved in \cite{otherpaper}. 

The following theorem is proved in \cite[Theorem~9.2]{otherpaper}. % and \cite[Lemma~9.6]{otherpaper}. 

\begin{theorem}\label{t:combinatorics}
Let $\cS$ be a lattice polyhedral subdivision  of a lattice polytope $P$. 
Then the  refined limit mixed $h^*$-polynomial satisfies the following properties: 

\begin{enumerate}

\item\label{prop'} The refined limit mixed $h^*$-polynomial is invariant under the interchange of $u$ and $v$, and satisfies the additional symmetry 
\[
h^*(P,\cS,u,v,w) = h^*(P,\cS,u^{-1},v^{-1},uvw).
\]

\item\label{prop1} The refined limit mixed $h^*$-polynomial specializes to the limit mixed $h^*$-polynomial
\[
h^*(P,\cS;u,v,1) = h^*(P,\cS;u,v).   
\]

\item\label{prop2} The refined limit mixed $h^*$-polynomial specializes to the mixed $h^*$-polynomial
\[
h^*(P,\cS;uw^{-1},1,w) = h^*(P;u,w).   
\]

\item\label{prop3} The refined limit mixed $h^*$-polynomial specializes to the $h^*$-polynomial
\[
h^*(P,\cS;u,1,1) = h^*(P;u).   
\]

\item\label{prop4} The degree of $h^*(P,\cS;u,v,w)$ as a polynomial in $w$ is at most $\dim P + 1$.  Moreover, the coefficient of 
$w^{\dim P + 1}$ is the local limit mixed $h^*$-polynomial
$\lc(P, \cS; u,v)$. 

%
%\item\label{prop5} %The degree of $h^*(P,u,w)$ as  a polynomial in $w$ is at most $\dim P$. 
%Every monomial $u^pw^q$ appearing in $h^*(P;u,w)$ satisfies $p + q \le \dim P + 1$. Moreover, the coefficient of $u^p w^{\dim P + 1 - p}$ in 
%$h^*(P;u,w)$ is equal to the coefficient of $u^p$ in the local $h^*$-polynomial $\lc(P; u)$. 

\item\label{prop6} The limit mixed $h^*$-polynomial can be written in terms of  mixed $h^*$-polynomials,
\[ 
h^*(P, \cS;u,v) = \sum_{ \substack{ F \in \cS \\ F \nsubseteq \partial P } } (uv - 1)^{\dim P - \dim F}  h^*(F;u,v), 
\]
where $\partial P$ denotes the boundary of $P$. %$\sigma(F)$ denotes the smallest (possibly empty) face of $P$ containing $F$. 

%\item\label{prop7} We have the following symmetry
%\[
%\Lambda(P,\cS,\Delta_P';u,v,w) = (uvw^2)^{\dim P + 1}\Lambda(P,\cS,\Delta_P';u^{-1},v^{-1},w^{-1}).
%\]

\end{enumerate}
\end{theorem}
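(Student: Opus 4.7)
The plan is to establish the six properties by working directly with the recursive definitions in Definition~\ref{d:new}, relying on two core tools: Stanley's inversion formula for Eulerian posets (Theorem~\ref{t:Eulerianinverse}) and the reciprocity identity $u^{\dim F + 1}\lc(F;u^{-1}) = \lc(F;u)$ for the local $h^*$-polynomial. Property (5) is the easiest: $g([Q,P];t)$ has degree strictly less than $(\dim P - \dim Q + 1)/2$ in $t$, so after the substitution $t = uvw^2$ and multiplication by the prefactor $w^{\dim Q + 1}$, each summand has $w$-degree at most $\dim P + 1$, with equality forcing $Q = P$; since $g([P,P];t) = 1$, the coefficient of $w^{\dim P + 1}$ is exactly $\lc(P,\cS;u,v)$.

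For the specializations (2)--(4), I will substitute into the definition of $h^*(P,\cS;u,v,w)$ and collapse the resulting nested sums using Theorem~\ref{t:Eulerianinverse}. For (2), setting $w = 1$ and substituting the definition of $\lc(Q,\cS|_Q;u,v)$ yields a double sum over pairs $Q' \subseteq Q \subseteq P$; exchanging the order, the inner sum
\[
\sum_{Q' \subseteq Q \subseteq P}(-1)^{\dim Q - \dim Q'} g([Q',Q]^*;uv)\, g([Q,P];uv)
\]
vanishes unless $Q' = P$ by Theorem~\ref{t:Eulerianinverse} applied to the Eulerian interval $[Q',P]$, and the surviving $Q' = Q = P$ term equals $h^*(P,\cS;u,v)$. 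Property (3) is analogous: the inner collapse, combined with the degree normalization coming from the $w$-weighting, recovers the Borisov--Mavlyutov formula for the mixed $h^*$-polynomial $h^*(P;u,w)$. Property (4) then follows either by further specializing $w = 1$ in (3) or directly by a parallel collapse.

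For property (6), the plan is to substitute the recursive definition of $h(\lk_\cS(F');uv)$ from Example~\ref{e:link} into the defining formula for $h^*(P,\cS;u,v)$, producing a double sum over chains $F' \subseteq F \in \cS$ of terms involving $v^{\dim F' + 1}\lc(F';uv^{-1})\, g([F',F];uv)\, (uv - 1)^{\dim P - \dim F}$. Organizing terms by $F$ and recognizing the inner sum over $F' \subseteq F$ as the trivial-subdivision specialization of the definition of $h^*(F;u,v)$ gives the stated formula; the condition $F \nsubseteq \partial P$ arises directly from the range of summation in Example~\ref{e:link}.

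The main obstacle will be property (1). The $u \leftrightarrow v$ symmetry is immediate from the reciprocity $v^{\dim F + 1}\lc(F;uv^{-1}) = u^{\dim F + 1}\lc(F;vu^{-1})$ combined with the manifest $uv$-symmetry of $h(\lk_\cS(F);uv)$ and of $g([Q,P];uvw^2)$. The second symmetry $(u,v,w) \mapsto (u^{-1},v^{-1},uvw)$ is substantially more delicate: when expanded it becomes a nontrivial identity between alternating sums of $g$-polynomial products indexed by chains of faces, analogous to the Hard Lefschetz-induced symmetry of the refined limit mixed Hodge numbers recorded in Remark~\ref{r:symmetriesrefined}. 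My plan is to proceed by induction on $\dim P$, applying Theorem~\ref{t:Eulerianinverse} simultaneously to intervals $[\emptyset,Q]^*$ and $[Q,P]$ in the face poset in order to rewrite the transformed polynomial in the original form. The geometric Hard Lefschetz theorem motivates the identity, but the argument must be purely combinatorial since Theorem~\ref{t:combinatorics} is used as input to the proof of the geometric Theorem~\ref{t:mainhyper}.
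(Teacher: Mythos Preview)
The paper does not prove Theorem~\ref{t:combinatorics}: it is introduced with the sentence ``The following theorem is proved in \cite[Theorem~9.2]{otherpaper}'' and is used purely as a black box imported from the companion combinatorial paper. So there is no in-paper argument to compare your proposal against; any genuine proof here would be new content relative to this paper.

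On its own merits, your plan is solid for properties~(\ref{prop4}), (\ref{prop1}), (\ref{prop3}) as a consequence of (\ref{prop1}) and (\ref{prop2}), and the $u\leftrightarrow v$ half of (\ref{prop'}), but it has real gaps elsewhere. For (\ref{prop6}), you assert that the restriction $F\nsubseteq\partial P$ ``arises directly from the range of summation in Example~\ref{e:link}'', but that example sums over \emph{all} $F\in\cS$ containing $F'$ with no boundary condition; getting the interior restriction requires an additional identity relating $h(\lk_\cS(F');uv)$ to the $h$-polynomials of links in the \emph{trivial} subdivisions of the interior cells, and you have not said what that identity is or why it holds. For (\ref{prop2}) the Stanley inversion collapse is fine, but for (\ref{prop2}) $\Rightarrow$ (\ref{prop2}) you wrote ``analogous'' when in fact (\ref{prop2}) must show that $h^*(P,\cS;uw^{-1},1,w)$ is \emph{independent of $\cS$}; the substituted expression still carries $\cS$ through $\lc(Q,\cS|_Q;uw^{-1},1)$, and you have not identified which cancellation removes it. Finally, the second symmetry in (\ref{prop'}) reduces (since $uvw^2$ is fixed by $(u,v,w)\mapsto(u^{-1},v^{-1},uvw)$) to the palindromicity statement $(uv)^{\dim Q+1}\lc(Q,\cS|_Q;u^{-1},v^{-1})=\lc(Q,\cS|_Q;u,v)$ for every face $Q$; this is the genuinely hard step, drawing on the deeper structure theory of local $h$-polynomials developed in \cite{otherpaper}, and ``induction on $\dim P$ using Theorem~\ref{t:Eulerianinverse} on two intervals simultaneously'' is not yet a plan---you have not said what the inductive hypothesis is or why the cross terms vanish.
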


In particular, we have the following diagram of invariants
\[\xymatrix{
h^*(P,\cS;u,v,w)   \ar[r]^>>>>>{\substack{u \mapsto uw^{-1} \\ v \mapsto 1}}   \ar[d]^{w \mapsto 1}  & h^*(P;u,w) \ar[d]^{w \mapsto 1}  &\\
 h^*(P,\cS;u,v) \ar[r]^{v \mapsto 1} &   h^*(P;u) \ar[r]^{u \mapsto 1}  & (\dim P)!\vol(P),
}\] 
where $\vol(P)$ is the Euclidean volume of $P$.

Let $\Delta_P$ denote the normal fan to $P$ with all maximal cones removed. The cones $\gamma_Q$ in $\Delta_P$  are in inclusion-reserving correspondence with the positive dimensional faces $Q$ of $P$.
Let $\Delta_P'$ denote a simplicial fan refinement of $\Delta_P$ which exists by the resolution of singularities algorithm for toric varieties \cite[Sec. 2.6]{FulIntroduction}. That is, every cone $\gamma'$ in $\Delta_P'$ is generated by precisely $\dim \gamma'$ rays, and is contained in a cone of $\Delta_P$. We let $\sigma(\gamma')$ denote the smallest cone in $\Delta_P$ containing $\gamma'$, and set 
\[
\Phi(P,\cS,\Delta_P';u,v,w) = %\sum_{\emptyset \ne Q \subseteq P} 
\sum_{  \substack{ Q \subseteq P  \\ \dim Q > 0   } } (-1)^{\dim Q} h^*(Q,\cS|_{Q};u,v,w)  \sum_{ \substack{ \gamma' \in \Delta_P' \\  \sigma(\gamma') = \gamma_Q } }   (uvw^2 - 1)^{\dim \gamma_Q - \dim \gamma'},
\]
and 
\[
\Lambda(P,\cS,\Delta_P';u,v,w) = \sum_{ \gamma' \in \Delta_P'  } (uvw^2 - 1)^{\dim P - \dim \gamma'}   - \Phi(P,\cS,\Delta_P';u,v,w). 
\]

We have the following characterization of the refined limit mixed $h^*$-polynomial is proved in  \cite[Corollary~9.7]{otherpaper}. 

\begin{corollary}\label{c:superimportant}
The refined  limit mixed $h^*$-polynomial as an invariant of polyhedral subdivisions of lattice polytopes is uniquely characterized by the following properties:
\begin{enumerate}
\item\label{m:1} The degree of $h^*(P,\cS;u,v,w)$ as a polynomial in $w$ is at most $\dim P + 1$.  

\item\label{m:2} The refined  limit mixed $h^*$-polynomial specializes to the limit mixed $h^*$-polynomial i.e.
\[
h^*(P,\cS;u,v,1) = h^*(P,\cS;u,v). 
\]

\item\label{m:3} If $\Delta_P'$ denotes a simplicial fan refinement of $\Delta_P$ then for $\Lambda$ defined in terms of the refined limit mixed $h^*$-polynomial as above, we have 
\[
\Lambda(P,\cS,\Delta_P';u,v,w) = (uvw^2)^{\dim P + 1}\Lambda(P,\cS,\Delta_P';u^{-1},v^{-1},w^{-1}).
\]
\end{enumerate}
\end{corollary}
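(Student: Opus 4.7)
The strategy is to separately establish existence (the polynomial $h^*(P,\cS;u,v,w)$ of Definition~\ref{d:new} satisfies (1)--(3)) and uniqueness (any invariant satisfying (1)--(3) coincides with it). Properties (1) and (2) are exactly parts (4) and (2) of Theorem~\ref{t:combinatorics}, so on the existence side the new content is the symmetry of $\Lambda$ in property (3). To prove this, I would substitute the defining expression
\[
h^*(Q,\cS|_Q;u,v,w) = \sum_{R \subseteq Q} w^{\dim R + 1}\lc(R,\cS|_R;u,v)\,g([R,Q];uvw^2)
\]
into $\Phi$ and $\Lambda$, interchange the order of summation so that $R$ becomes the outermost index, and then apply Stanley's reciprocity (Theorem~\ref{t:Eulerianinverse}) together with the defining symmetry $t^{\rho(B)} g(B;t^{-1}) = \sum_{x \in B}(t-1)^{\rho(B) - \rho(x)}g([\hat 0,x];t)$ of the $g$-polynomial in order to match the required $(u,v,w)\leftrightarrow (u^{-1},v^{-1},w^{-1})$ functional equation.

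For uniqueness, the plan is to induct on $\dim P$, with the small-dimensional base cases handled by direct inspection. Suppose $f_1, f_2$ are two assignments of polynomials to pairs $(P,\cS)$ satisfying (1)--(3), and set $d := f_1 - f_2$ at $(P,\cS)$. By induction, $f_1$ and $f_2$ agree on all proper faces $Q \subsetneq P$. The key observation is that in $\Phi$ the $Q = P$ term corresponds to the zero cone $\gamma_P \in \Delta_P$, which forces $\gamma' = \{0\}$ as the unique cone of $\Delta_P'$ with $\sigma(\gamma') = \gamma_P$; consequently the $Q = P$ contribution to $\Phi_i$ collapses to $(-1)^{\dim P} f_i(P,\cS;u,v,w)$. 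Hence $\Lambda_1 - \Lambda_2 = (-1)^{\dim P + 1}d$, and applying the symmetry in (3) separately to $\Lambda_1$ and $\Lambda_2$ yields
\[
d(u,v,w) = (uvw^2)^{\dim P + 1}\,d(u^{-1},v^{-1},w^{-1}).
\]

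Property (2) forces $d(u,v,1) = 0$, so $d = (w-1)e$ for some polynomial $e$ whose $w$-degree is at most $\dim P$ by property (1). Substituting into the functional equation above and dividing out $w-1$ gives
\[
e(u,v,w) = -(uv)^{\dim P + 1} w^{2\dim P + 1}\, e(u^{-1},v^{-1},w^{-1}).
\]
Expanding $e(u^{-1},v^{-1},w^{-1})$ as a Laurent polynomial in $w$ of order at most $\dim P$ shows that the right hand side is a polynomial whose nonzero $w$-coefficients lie in degrees between $\dim P + 1$ and $2\dim P + 1$, while the left hand side has $w$-degree at most $\dim P$. Comparing powers of $w$ forces $e = 0$, whence $d = 0$ and $f_1 = f_2$.

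The hard part will be establishing property (3) for $h^*(P,\cS;u,v,w)$ itself: this amounts to a combinatorial identity rearranging a triple sum over faces $R \subseteq Q \subseteq P$ and refined cones $\gamma' \in \Delta_P'$, and draws essentially on Stanley's subdivision theory developed in \cite{otherpaper}. By contrast, once (3) is in hand, the uniqueness half reduces to the short functional-equation argument outlined above, powered entirely by the observation that only the single $Q = P$ term in $\Phi$ survives after modding out by the inductive hypothesis.
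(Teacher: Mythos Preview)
The paper does not actually include a proof of this corollary: it is quoted from the companion paper \cite{otherpaper} (as Corollary~9.7 there), so there is no in-paper argument to compare against directly. That said, your proposal is sound, and it is worth noting how it lines up with the way the paper \emph{uses} the corollary.

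Your uniqueness argument is correct and is precisely the combinatorial abstraction of the Danilov--Khovanski{\u\i}-type algorithm in Section~\ref{ss:DK}. In the paper's geometric version one argues: weak Lefschetz fixes the $w$-coefficients in high degree, Poincar\'e duality on a simplicial compactification (your property~(3)) fixes them in low degree, and the specialization to $w=1$ fixes the remaining middle degree. Your functional-equation argument for $d=f_1-f_2$ compresses exactly this: the identity $d(u,v,w)=(uvw^2)^{\dim P+1}d(u^{-1},v^{-1},w^{-1})$ combined with the degree bound and $d(u,v,1)=0$ forces $d=0$ by the disjoint-support-in-$w$ observation you make. Your identification of the $Q=P$ term in $\Phi$ as $(-1)^{\dim P}f_i(P,\cS;u,v,w)$ is correct, since $\gamma_P=\{0\}$ and the only cone of $\Delta_P'$ carried by $\{0\}$ is $\{0\}$ itself.

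On the existence side you are right that (1) and (2) are parts of Theorem~\ref{t:combinatorics}, and that property~(3) is where the real work lies. Your sketch (expand $h^*(Q,\cS|_Q;u,v,w)$ via Definition~\ref{d:new}, swap the order of summation, and invoke the $g$-polynomial recursion together with Stanley's inversion formula Theorem~\ref{t:Eulerianinverse}) is the right shape of argument; carrying it out carefully is exactly what is done in \cite{otherpaper}, and you correctly flag that this is not a one-line verification but a genuine combinatorial identity over the face poset of $P$ and the cones of $\Delta_P'$.
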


Similarly, the following characterization of the mixed $h^*$-polynomial is given in  \cite[Corollary~9.8]{otherpaper}. 
With the notation above, we set $\Lambda(P,\cS,\Delta_P';u,w) := \Lambda(P,\cS,\Delta_P';uw^{-1},1,w)$. 
Using \eqref{prop2} in Theorem~\ref{t:combinatorics}, we may write this as:
\[
 \sum_{ \gamma' \in \Delta_P'  } (uw - 1)^{\dim P - \dim \gamma'}  - \sum_{  \substack{ Q \subseteq P  \\ \dim Q > 0   } } (-1)^{\dim Q} h^*(Q,\cS|_{Q};u,w)  \sum_{ \substack{ \gamma' \in \Delta_P' \\  \sigma(\gamma') = \gamma_Q } }   (uw - 1)^{\dim \gamma_Q - \dim \gamma'}.
 \] 

\begin{corollary}\label{c:notasimportant}
The mixed $h^*$-polynomial as an invariant of lattice polytopes is uniquely characterized by the following properties:
\begin{enumerate}
\item\label{m:1'} All terms in $h^*(P;u,w)$ have combined degree in $u$ and $w$ at most $\dim P + 1$.  

\item\label{m:2'} The mixed $h^*$-polynomial specializes to the $h^*$-polynomial i.e.
\[
h^*(P;u,1) = h^*(P;u). 
\]

\item\label{m:3'} If $\Delta_P'$ denotes a simplicial fan refinement of $\Delta_P$ then for $\Lambda$ defined in terms of the mixed $h^*$-polynomial as above, we have 
\[
\Lambda(P,\cS,\Delta_P';u,w) = (uw)^{\dim P + 1}\Lambda(P,\cS,\Delta_P';u^{-1},w^{-1}).
\]
\end{enumerate}
\end{corollary}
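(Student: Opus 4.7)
My plan is to parallel the proof of Corollary~\ref{c:superimportant}: establish existence by specializing the refined limit mixed $h^*$-polynomial, and establish uniqueness by induction on $\dim P$, using condition~(\ref{m:3'}) to derive a functional equation for the difference of two candidates and condition~(\ref{m:2'}) to pin down the remaining freedom.

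For existence, the mixed $h^*$-polynomial equals $h^*(P, \cS; uw^{-1}, 1, w)$ by property~\eqref{prop2} of Theorem~\ref{t:combinatorics}, and each of (\ref{m:1'}), (\ref{m:2'}), (\ref{m:3'}) follows by specializing the corresponding condition of Corollary~\ref{c:superimportant} under $u \mapsto uw^{-1}$, $v \mapsto 1$. Concretely, (\ref{m:1'}) holds because a monomial $u^a v^b w^r$ in the refined polynomial (with $r \le \dim P + 1$) maps to $u^a w^{r-a}$ of combined $(u,w)$-degree $r$; (\ref{m:2'}) holds via $h^*(P, \cS; u, 1, 1) = h^*(P; u)$, which is part~\eqref{prop3} of Theorem~\ref{t:combinatorics}; and (\ref{m:3'}) is the direct specialization of the refined symmetry, using the defining identity $\Lambda(P, \cS, \Delta_P'; u, w) := \Lambda(P, \cS, \Delta_P'; uw^{-1}, 1, w)$.

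For uniqueness, let $A(P; u, w)$ and $B(P; u, w)$ both satisfy (\ref{m:1'})--(\ref{m:3'}), set $D = A - B$, and induct on $\dim P$ (with empty polytopes and points as base cases, where $h^* \equiv 1$ by convention). Assuming $D$ vanishes on proper faces of $P$, the difference $\Phi_A - \Phi_B$ collapses to the single term $Q = P$: the cone $\gamma_P = 0$ lies in $\Delta_P'$ and contributes with coefficient $(uw - 1)^0 = 1$, so that $\Phi_A - \Phi_B = (-1)^{\dim P} D(P; u, w)$. Subtracting the symmetry~(\ref{m:3'}) applied to $A$ and $B$ then produces the functional equation
\[
D(P; u, w) = (uw)^{\dim P + 1}\, D(P; u^{-1}, w^{-1}).
\]

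To extract $D \equiv 0$, write $D(P; u, w) = \sum_{a+b \le d+1} c_{a,b}\, u^a w^b$ with $d = \dim P$; the right-hand side expands as $\sum c_{a,b}\, u^{d+1-a} w^{d+1-b}$ whose monomials have combined degree $\ge d+1$, so matching terms of combined degree strictly less than $d+1$ forces $c_{a,b} = 0$ for $a + b < d+1$. The "top" coefficients $c_{a, d+1-a}$ are then pinned down by (\ref{m:2'}): since $D(P; u, 1) = 0$, the vanishing already established reduces this to $c_{a, d+1-a} = 0$ for every $a$. I expect the main obstacle to be this final step: condition (\ref{m:3'}) alone leaves the top-degree layer of $D$ undetermined up to the pairing $c_{a, d+1-a} = c_{d+1-a, a}$, and the argument genuinely requires (\ref{m:2'}) to supply the missing normalization that rules out nontrivial degree-$(d+1)$ perturbations.
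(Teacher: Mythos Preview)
Your argument is correct. The paper itself does not give a proof of this corollary, deferring it to \cite[Corollary~9.8]{otherpaper}; but your strategy---obtaining existence by specializing Corollary~\ref{c:superimportant} under $u\mapsto uw^{-1}$, $v\mapsto 1$, and proving uniqueness by induction on $\dim P$ via the functional equation $D(P;u,w)=(uw)^{\dim P+1}D(P;u^{-1},w^{-1})$ combined with the degree bound~(\ref{m:1'}) and the specialization~(\ref{m:2'})---is exactly the approach signaled by the paper's ``Similarly'' and is the natural parallel to the refined case. One minor point: your treatment of the base case is a little informal; the cleanest way to handle it is to note that the sum defining $\Phi$ runs only over faces $Q$ with $\dim Q>0$, so for $\dim P=1$ the inductive hypothesis is vacuous and your functional-equation argument applies directly, while the $0$-dimensional case (if one insists on including it) is a short direct check using only (\ref{m:1'}) and (\ref{m:2'}).
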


The following example is computed in \cite[Example~9.10]{otherpaper}:

\begin{example}\label{e:smallterms}
If we write 
\[
h^*(P,\cS,u,v,w) = 1 + uvw^2 \cdot \sum_{0 \le p,q,r \le \dim P  - 1} h^*_{p,q,r}(P,\cS) u^{p}v^{q}w^r,
\]
then we have an explicit description of some of the coefficients of  $h^*(P,\cS,u,v,w)$. 
If $F$ is a cell of $\cS$, then let $\sigma(F)$ denote the smallest face of $P$ containing $F$.
Then for $q,r > 0$,
\[
h^*_{0,q,r}(P,\cS) =   \sum_{\substack{F \in \cS \\\ \dim F = q + 1 \\\ \dim \sigma(F) = r + 1}} \# (\Int(F) \cap M),
\]
\[
h^*_{0,0,r}(P,\cS) =   \sum_{\substack{F \in \cS \\\ \dim F \le 1 \\\ \dim \sigma(F) = r + 1}} \# (\Int(F) \cap M),
\]
\[
\dim P + 1 + h^*_{0,0,0}(P,\cS) =  \sum_{\substack{Q \subseteq P \\\ \dim Q \le 1}} \# (\Int(Q) \cap M). %\sum_{\substack{F \in \cS \\\ \dim F \le 1 \\\ \dim \sigma(F) \le 1}} \# (\Int(F) \cap M).
\]
%For example, when $\dim P = 1$, 
%\[
%h^*(P,\cS,u,v,w) = 1 + uvw^2 \cdot  \# (\Int(F) \cap M).
%\]
%When $\dim P = 2$, 
%\[
%h^*_{0,0,0}(P,\cS) =   \# (\partial P \cap M) - 3,
%\]
%\[
%h^*_{0,0,1}(P,\cS) =  h^*_{1,1,1}(P,\cS) =  \sum_{\substack{F \in \cS, F \nsubseteq \partial P \\\ \dim F \le 1 }} \# (\Int(F) \cap M),
%\]
%\[
%h^*_{0,1,1}(P,\cS) =  h^*_{1,0,1}(P,\cS) =  \sum_{\substack{F \in \cS \\\ \dim F = 2 }} \# (\Int(F) \cap M),
%\]
%\[
%h^*(P,\cS,u,v,w) = 1 + uvw^2 %\cdot
% \big[ h^*_{0,0,0}(P,\cS) + h^*_{0,0,1}(P,\cS)(1 + uv)w + h^*_{0,1,1}(P,\cS)(u +v)w ] .
%\]

Using Property \eqref{prop'} of Theorem~\ref{t:combinatorics},  when $\dim P = 2$, this gives an explicit description of $h^*(P,\cS,u,v,w)$:
\[
h^*(P,\cS,u,v,w) = 1 + uvw^2  \big[ h^*_{0,0,0}(P,\cS) + w\big[(1 + uv)h^*_{0,0,1}(P,\cS) + (u + v)h^*_{0,1,1}(P,\cS)\big].
\]
When $\dim P = 3$, we have 
\[
h^*(P,\cS,u,v,w) = 1 + uvw^2  \big[ h^*_{0,0,0}(P,\cS) + w\big[(1 + uv)h^*_{0,0,1}(P,\cS) + (u + v)h^*_{0,1,1}(P,\cS)\big] 
\]
\[
+ w^2\big[ (1 + (uv)^2)h^*_{0,0,2}(P,\cS) + (u + v)(1 + uv)h^*_{0,1,2}(P,\cS) + (u^2 + v^2)h^*_{0,2,2}(P,\cS) + uvh^*_{1,1,2}(P,\cS)  \big]   \big],  
\]
where each term has an explicit description above except $h^*_{1,1,2}(P,\cS)$. By  \eqref{prop3} of Theorem~\ref{t:combinatorics}, 
$ h^*(P,\cS,1,1,1) = h^*(P,\cS,1) = 6\vol(P)$, and this determines $h^*_{1,1,2}(P,\cS)$ and hence $h^*(P,\cS,u,v,w)$.
\end{example}

%\begin{remark}\label{r:localsymmetry}
%It follows from \eqref{prop'} and \eqref{prop4} in Theorem~\ref{t:combinatorics} that the local limit mixed $h^*$-polynomial satisfies the symmetry $\lc(P, \cS; u,v) = (uv)^{\dim P + 1} \lc(P, \cS; u^{-1},v^{-1})$. 
%\end{remark}

\section{Refined limit mixed Hodge numbers of hypersurfaces} \label{s:hypersurfaces}

The goal of this section is to present a  proof of Theorem~\ref{t:mainhyper} giving a combinatorial formula for the refined limit Hodge-Deligne polynomial of a sch\"{o}n hypersurface in $(\K^*)^n$ which is interpreted as a family of hypersurfaces.
We first reprove a theorem of Danilov-Khovanski{\u\i} for the $\chi_y$-characteristic of a complex hypersurface in terms of the $h^*$-polynomial of its Newton polytope.  Then we give combinatorial formulas of the following progressively finer cohomological invariants: the Hodge-Deligne polynomial of a generic fiber; the limit Hodge-Deligne polynomial, the limit Hodge-Deligne polynomial of a smooth compactification of the family of hypersurfaces, and then the refined limit Hodge-Deligne polynomial.  We will make use of the fact that the cohomology of a hypersurface is tightly constrained by Poincar\'{e} duality and the weak Lefschetz theorem.

\subsection{Tropical geometry for hypersurfaces}\label{ss:tropical}

Let $X^\circ =  \{ \sum_{u \in M} \alpha_u x^u = 0 \} \subset T \cong (\K^*)^n$ be a sch\"on hypersurface. 
The \define{Newton polytope} $P$ of $X^\circ$ is the convex hull of $\{ u \in M \mid \alpha_u \ne 0 \}$. 
 Note that  $P$ may be viewed as
 a full-dimensional lattice polytope in the translation $M$ of the saturation of its integer affine span in $\Z^n$ to the origin, and  $X^\circ \cong X' \times (\K^*)^k$,  
 for some $k$, where $X' \subseteq \Spec \K[M] $ is a sch\"on hypersurface with Newton polytope $P$. 
 Hence we may and will assume that $\dim P = n$. % and $\dim X^\circ = d = n -1$.

Tropical geometry of hypersurfaces reduces to the study of Newton polytopes and polyhedral subdivisions \cite{GKZ,RSTFirst}. Recall that the field $\K$ has a natural valuation by considering the vanishing order of a function on $\D^*$ at the origin.  
With the notation above, the function 
$P \cap \Z^n \rightarrow \Z$, $u \mapsto \ord(\alpha_u)$ induces a regular, lattice subdivision $\cS$ of $P$.  Explicitly, the cells of $\cS$ are the projections 
of the bounded faces of the convex hull of $\operatorname{UH}=\{ (u, \lambda) \mid \alpha_u \ne 0, \lambda \ge \ord(\alpha_u) \}$ in $\R^n \times \R$,
and the bounded faces of $\operatorname{UH}$ are the graph of a function $\omega:P\rightarrow\R$.  Restricting to $P\cap \Z^n$, we get a height function.  There is a dual complex associated to the height function that generalizes the normal fan.  The cells of this complex are in inclusion-reversing bijective correspondence with the cells of $\cS$.   See \cite[9.11]{Gubler} for details.

\begin{remark}\label{r:existence}
Since the initial degeneration $\init_wX^{\circ}$ of a hypersurface is given by the corresponding initial form of its defining polynomial,  for a generic choice of coefficients (in a certain analytic topology), 
a hypersurface with a given height function is sch\"on, i.e. all initial degenerations are smooth.  
%by Proposition \ref{prop:schon}, for a generic choice of coefficients (in a certain analytic topology), a hypersurface with a given height function is sch\"on.  
Hence every pair $(P,\cS)$, where $\cS$ is a regular, lattice polyhedral subdivision of a lattice polytope $P$ arises from the construction above for some sch\"on hypersurface. 
See \cite[Section~8.1]{HelmKatz} for a more detailed discussion of genericity and sch\"{o}nness. 
\end{remark}

The tropicalization $\Trop(X^\circ)$ is supported on the non-maximal-dimensional skeleton of the dual complex \cite[Section~3]{RSTFirst} to $\cS$.  The restriction of the dual complex to $\Trop(X^\circ)$ gives a polyhedral structure $\Sigma$.
%The corresponding recession fan of $\Trop(X^\circ)$ %may and will be chosen to be 
%is the fan obtained from the normal fan of $P$ by removing its maximal cones. 
With the notation of Section~\ref{s:combinatorial}, 
the recession fan $\Delta = \Delta_P$ of $\Sigma$ is %equal to the normal fan $\Delta_P$ of $P$ with the maximal cones removed. 
the normal fan of $P$ with the maximal cones removed.
Recall from Section~\ref{s:proof} that we may define a toric scheme $\P(\Sigma)_\O$ over $\O$ from $\Sigma$ with generic fiber equal to the toric variety $\P(\Delta)_\K$. 
Let $\X$ denote the closure of $X^\circ$ in $\P(\Sigma)_\O$, and let $X_{\Delta}$ and $X_0$ denote the generic fiber and central fiber of $\X$ respectively.  
Then we can write the stratifications of $X_{\Delta}$ and $X_0$ in dual language with respect to the Newton polytope and subdivision as the following:
\[
X_{\Delta} = \bigcup_{ \substack{ Q \subseteq P \\ \dim Q > 0  } } X^\circ_{Q}, \: X_0 = \bigcup_{  \substack{ F \in \cS \\ \dim F > 0  }    } X^\circ_{F}
\]
The fixed non-zero fiber $X^\circ_{\gen}$ is  a sch\"on hypersurface with Newton polytope $P$ in its corresponding complex torus, which we denote as $T_{\gen}$. For every cell $F$ of $\cS$ with $\dim F > 0$, the 
corresponding complex variety $X^\circ_{F}$ is a  complex sch\"on hypersurface with Newton polytope $F$, and, %respect to a complex torus, and 
if $w$ lies in the relative interior of the cell in $\Sigma$ corresponding to $F$, then 
\[
\init_w X^\circ\cong  X_F^\circ  \times (\C^*)^{\dim P - \dim F}. %(\X\cap U_F).
\]
When $\dim F = 0$, $\init_w X^\circ = X_F^\circ = \emptyset$, and the corresponding motivic invariants are zero.
%We conclude that the formula for the limit Hodge-Deligne polynomial in Corollary~\ref{c:main} translates into the following result. 
We conclude that Theorem~\ref{t:comp} translates into the following corollary. 

\begin{corollary} \label{c:hyper} 
Let $X^\circ \subseteq (\K^*)^n$ be a sch\"{o}n hypersurface, with associated Newton polytope and polyhedral subdivision $(P,\cS)$ and $\dim P = n$. 
Then the  motivic nearby fiber of $X^\circ$ is given by
%\[
%E(X^\circ_\infty;u,v) = \sum_{\substack{F \in \cS \\\ F \nsubseteq \partial P}} E(X_{F}^\circ;u,v)(1 - uv)^{\dim P - \dim F},
%\]
\[
\psi_{X^\circ} =  \sum_{\substack{F \in \cS \\\ F \nsubseteq \partial P}}  [X_{F}^\circ](1 - \L)^{\dim P - \dim F},
\] 
where $\partial P$ denotes the boundary of $P$, $\L := [\A^1] \in K_0(\Var_\C)$, and $X_F^\circ$ is a complex sch\"on hypersurface with Newton polytope $F$.
\end{corollary}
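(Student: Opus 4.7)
The plan is to derive this corollary directly from Theorem~\ref{t:comp} by translating the sum over bounded cells of the polyhedral structure $\Sigma$ on $\Trop(X^\circ)$ into a sum over cells of the Newton polytope subdivision $\cS$, using the inclusion-reversing duality recalled in Section~\ref{ss:tropical}.

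First I would set up the precise dictionary. Since $\Trop(X^\circ)$ is supported on the non-maximal-dimensional skeleton of the dual complex to $\cS$, the cells of $\Sigma$ are indexed by cells $F \in \cS$ with $\dim F \ge 1$, with the corresponding cell $\gamma_F \in \Sigma$ satisfying $\dim \gamma_F = \dim P - \dim F = n - \dim F$. The recession cone of $\gamma_F$ is the normal cone in $\Delta_P$ to the smallest face of $P$ containing $F$, so $\gamma_F$ is bounded if and only if that smallest face is $P$ itself, equivalently $F \not\subseteq \partial P$.

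Applying Theorem~\ref{t:comp} under this bijection, I would rewrite
\[
\psi_{X^\circ} \;=\; \sum_{\substack{\gamma \in \Sigma \\ \gamma \text{ bounded}}} (-1)^{\dim \gamma}[\init_\gamma X^\circ] \;=\; \sum_{\substack{F \in \cS,\ \dim F \ge 1 \\ F \not\subseteq \partial P}} (-1)^{n - \dim F}[\init_{\gamma_F} X^\circ].
\]
Substituting the identification $\init_{\gamma_F} X^\circ \cong X_F^\circ \times (\C^*)^{n - \dim F}$ recorded in Section~\ref{ss:tropical}, together with $[(\C^*)^k] = (\L - 1)^k$ in $K_0(\Var_\C)$ and the sign identity $(-1)^k(\L - 1)^k = (1 - \L)^k$, transforms this into
\[
\psi_{X^\circ} \;=\; \sum_{\substack{F \in \cS,\ \dim F \ge 1 \\ F \not\subseteq \partial P}} [X_F^\circ](1 - \L)^{\dim P - \dim F}.
\]

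Finally, since a complex sch\"on hypersurface whose Newton polytope is a point is empty, the terms with $\dim F = 0$ contribute nothing, so the summation range harmlessly extends to all $F \in \cS$ with $F \not\subseteq \partial P$, giving the stated formula. There is no real obstacle here: the entire argument is a bookkeeping translation, and the only point requiring some attention is matching the sign $(-1)^{\dim \gamma_F}$ with the factor $(1-\L)^{\dim P - \dim F}$ via the codimension relation $\dim \gamma_F + \dim F = n$, together with the verification that the bounded-cell condition in Theorem~\ref{t:comp} matches exactly the condition $F \not\subseteq \partial P$ under the duality.
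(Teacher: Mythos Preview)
Your proposal is correct and is exactly the argument the paper intends: the paper simply records the duality dictionary in Section~\ref{ss:tropical} (bounded cells of $\Sigma$ correspond to cells $F \in \cS$ with $F \nsubseteq \partial P$, $\dim \gamma_F = \dim P - \dim F$, and $\init_{\gamma_F} X^\circ \cong X_F^\circ \times (\C^*)^{\dim P - \dim F}$) and then states that Theorem~\ref{t:comp} ``translates'' into Corollary~\ref{c:hyper}. You have made the bookkeeping explicit, including the sign match $(-1)^{\dim \gamma_F}(\L - 1)^{\dim \gamma_F} = (1 - \L)^{\dim P - \dim F}$ and the harmless inclusion of $0$-dimensional cells.
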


\subsection{%A new proof of t
The $\chi_y$-characteristic of a complex %sch\"on 
hypersurface}\label{s:chiy}

We apply Corollary~\ref{c:hyper} %our theory of the tropical motivic nearby fiber 
to %derive
give a new proof of
 a formula of Danilov-Khovanski{\u\i} \cite[Section~4]{DKAlgorithm} for the $\chi_y$-characteristic of sch\"{o}n hypersurfaces in $(\C^*)^n$.  
%Danilov-Khovanski{\u\i} use their formula in connection with the Hard Lefschetz theorem and Poincar\'{e} duality to compute all the Hodge numbers.  
%Our method is different from theirs in that they use an adjunction exact sequence on the ambient toric varieties while we attempt to degenerate the hypersurface into a union of hyperplanes.

%\eric{changed $X$'s to $V$'s below}

\begin{remark}\label{r:independence}
The fact that the Hodge-Deligne polynomial of a sch\"on hypersurface of a complex torus is determined by its Newton polytope can be seen directly. One considers 
the closure $V$ of a sch\"on hypersurface $V^\circ$ given by a Laurent polynomial with Newton polytope $P$  in a toric resolution of the complex toric variety determined by $P$.  It is a smooth variety. 
Since Hodge numbers are locally constant through families of smooth varieties, the Hodge-Deligne polynomial of $V$ is independent of the choice of polynomial.  
The result can then be deduced from the motivic nature of the Hodge-Deligne polynomial.

\end{remark}

Let $V^\circ$ be a sch\"on hypersurface of a complex torus given by a polynomial with Newton polytope $P$.  We may suppose $\dim P = \dim V^\circ + 1$. Recall from Remark~\ref{r:existence} and Remark~\ref{r:independence}
that the Hodge-Deligne polynomial of $V^\circ$ only depends on $P$, and that given any $P$, there exists a corresponding sch\"on hypersurface $V^\circ$. Hence we may define
\[ E(V_P^\circ;u,v) := E(V^\circ;u,v).\]
If $P$ is empty, then we let $V_P^\circ$ be the empty set. 
%Let us determine the value of $E(V(P)^\circ;u,v)$ on unimodular simplices.
To identify the $\chi_y$-characteristic, we build a valuation out of it (see Definition~\ref{d:valuation}).

\begin{lemma}\label{l:valuation}
The map 
\begin{eqnarray*}
\cp_{\Z^n}&\rightarrow&\Z[[u]]\\
P&\mapsto &\frac{E(V_P^\circ;u,1)}{(u-1)^{\dim P+1}}
\end{eqnarray*}
is a valuation on the set $\cp_{\Z^n}$ of lattice polytopes in $\Z^n$.
\end{lemma}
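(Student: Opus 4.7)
The plan is to verify the three axioms of a valuation from Definition~\ref{d:valuation} for the map $\phi\colon P\mapsto E(V_P^\circ;u,1)/(u-1)^{\dim P+1}$. First, the target makes sense: since $(u-1)^{-1}=-\sum_{k\geq 0}u^k\in\Z[[u]]$, every value of $\phi$ lies in $\Z[[u]]$. Axiom~(2) is immediate: $V_\emptyset^\circ=\emptyset$, so $E(V_\emptyset^\circ;u,1)=0$ while $(u-1)^{\dim\emptyset+1}=(u-1)^0=1$ under the convention $\dim\emptyset=-1$, giving $\phi(\emptyset)=0$. Axiom~(3), invariance under $\Aut(M)\ltimes M$, holds because a lattice affine automorphism lifts to an isomorphism of complex tori carrying a schön hypersurface with Newton polytope $P$ to one with Newton polytope $UP+u$; in particular, the respective Hodge-Deligne polynomials coincide, which is exactly the fact used to make $V_P^\circ$ well-defined (Remark~\ref{r:independence}).

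The content lies in axiom~(1). Given a regular lattice subdivision $\cS$ of $P$, I use Remark~\ref{r:existence} to produce a schön hypersurface $X^\circ\subseteq(\K^*)^n$ whose associated Newton polytope and subdivision are exactly $(P,\cS)$. By Corollary~\ref{c:hyper},
\[
\psi_{X^\circ}=\sum_{\substack{F\in\cS\\ F\nsubseteq\partial P}}[X_F^\circ](1-\L)^{\dim P-\dim F}.
\]
Composing with the Hodge-Deligne map $E$, which sends $\L\mapsto uv$, then specializing to $v=1$ and using $(1-u)^k=(-1)^k(u-1)^k$, I obtain
\[
E(X_\infty^\circ;u,1)=\sum_{\substack{F\in\cS\\ F\nsubseteq\partial P}}(-1)^{\dim P-\dim F}\,E(X_F^\circ;u,1)\,(u-1)^{\dim P-\dim F}.
\]

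To pass from $X^\circ$ to $V_P^\circ$ and from $X_F^\circ$ to $V_F^\circ$, I invoke \eqref{e:xy}, which gives $E(X_\infty^\circ;u,1)=E(X_{\gen}^\circ;u,1)$. Since $X_{\gen}^\circ$ and each $X_F^\circ$ are complex schön hypersurfaces with Newton polytope $P$ and $F$ respectively, Remark~\ref{r:independence} gives $E(X_{\gen}^\circ;u,1)=E(V_P^\circ;u,1)$ and $E(X_F^\circ;u,1)=E(V_F^\circ;u,1)$. Substituting and dividing both sides by $(u-1)^{\dim P+1}$, the exponent of $(u-1)$ on the right collapses to $-(\dim F+1)$, yielding
\[
\phi(P)=\sum_{\substack{F\in\cS\\ F\nsubseteq\partial P}}(-1)^{\dim P-\dim F}\phi(F),
\]
which is exactly axiom~(1). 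The only genuine content beyond bookkeeping is Corollary~\ref{c:hyper} together with the specialization $E(X_\infty;u,1)=E(X_{\gen};u,1)$, so the main obstacle, such as it is, is simply keeping the signs, exponents of $(u-1)$, and cell dimensions aligned.
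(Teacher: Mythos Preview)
Your proof is correct and follows essentially the same route as the paper's: dismiss axioms~(2) and~(3) quickly, then for axiom~(1) invoke Remark~\ref{r:existence} to realize $(P,\cS)$ by a sch\"on hypersurface $X^\circ$, apply Corollary~\ref{c:hyper}, specialize via~\eqref{e:xy}, and divide through by $(u-1)^{\dim P+1}$. The only difference is that you supply a little more detail on why the target is $\Z[[u]]$ and on the identifications $E(X_{\gen}^\circ;u,1)=E(V_P^\circ;u,1)$, which the paper leaves implicit.
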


\begin{proof}
Properties (\ref{v:c2}) and (\ref{v:c3}) in Definition~\ref{d:valuation} are clearly satisfied so we must show Property~(\ref{v:c1}).  Let $\cS$ be a regular lattice polyhedral subdivision of $P$. %induced by a height function. 
By Remark~\ref{r:existence}, there exists a  sch\"on hypersurface $X^\circ \subset %T = (\K^*)^n$ 
 (\K^*)^{\dim P}$ with corresponding Newton polytope $P$ and polyhedral subdivision $\cS$.   
This hypersurface satisfies $E(X^\circ_{\gen};u,1)=E(V_P^\circ;u,1)$.
By Corollary~\ref{c:hyper} and \eqref{e:xy}, we obtain %:
\[
E(V_P^\circ;u,1)
=  \sum_{\substack{F \in \cS \\\ F \nsubseteq \partial P}}   E(V_F^\circ;u,1)(1 - u)^{\dim P - \dim F}
%=\sum_{\substack{F \in \cS \\\ F \nsubseteq \partial P}}   E(V_F^\circ;u,1)(1 - u)^{\dim P - \dim F}. 
\]
If we divide by $(u-1)^{\dim P+1}$, we get
\[
\frac{E(V_P^\circ;u,1)}{(u-1)^{\dim P+1}} =
 \sum_{\substack{F \in \cS \\\ F \nsubseteq \partial P}}  (-1)^{\dim P - \dim F}      \frac{E(V_F^\circ;u,1)}{(u - 1)^{\dim F+1}}.
\]
%\eric{changed}
\end{proof}

With the notation of Section~\ref{s:combinatorial}, we obtain a new proof of Danilov and Khovanski{\u\i}'s theorem. 

\begin{theorem} \label{t:xy} \cite[Sec. 4]{DKAlgorithm} 
Let  $P$ be a non-empty lattice polytope and let $V_P^\circ$ be a complex sch\"on hypersurface with Newton polytope $P$. Then 
%If $P\neq\emptyset$, then 
%we have the following formula for the $\chi_y$-characteristic of $V(P)^\circ$:
%\[uE(V(P)^\circ;u,1)=(u-1)^{\dim P}+(-1)^{\dim P+1} h_P^*(u),\]
we have the following formula for the $\chi_y$-characteristic of $V_P^\circ$: 
\[uE(V_P^\circ;u,1)=(u-1)^{\dim P}+(-1)^{\dim P+1} h^*(P;u),\]
where $h^*(P;u)$ is the $h^*$-polynomial of $P$. 
\end{theorem}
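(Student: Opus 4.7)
I would reformulate the identity as an equality of two valuations and then reduce to a direct computation on unimodular simplices via Lemma~\ref{l:simplex}. Dividing the desired identity through by $(u-1)^{\dim P+1}$ and invoking the Ehrhart generating-function identity $\sum_{m\ge 0} f_P(m)\,u^m = h^*(P;u)/(1-u)^{\dim P+1}$ (valid for non-empty $P$), the claim is equivalent to
\[ u\,A(P) \;=\; \frac{u}{u-1} \;+\; \sum_{m\ge 1} f_P(m)\,u^m, \]
where $A(P) := E(V_P^\circ;u,1)/(u-1)^{\dim P+1}$ is the valuation furnished by Lemma~\ref{l:valuation}; note that both sides vanish on $P=\emptyset$.

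Next I would verify that each side extends to a valuation on $\cp_M$ with values in the ring $G=\Z[u^{\pm 1},(u-1)^{-1}]$. The LHS $u\cdot A(P)$ is a valuation since $A$ is one. The RHS splits as $\phi_1(P)+\phi_2(P)$, where $\phi_2(P)=\sum_{m\ge 1} f_P(m)\,u^m$ is a valuation because each coefficient $P\mapsto f_P(m)$ is a $\Z$-valued valuation (Example~\ref{e:aval}), and $\phi_1(P)=(u/(u-1))\cdot[P\neq \emptyset]$ is a ``constant on non-empties'' valuation whose subdivision relation reduces to the identity $\sum_{F\in\cS,\ F\not\subseteq\partial P}(-1)^{\dim P-\dim F}=1$, a direct consequence of Lemma~\ref{l:euler} applied with $Q=\emptyset$.

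By Lemma~\ref{l:simplex} it then suffices to verify the identity on unimodular simplices. For $P$ a unimodular $d$-simplex one has $h^*(P;u)=1$, and using translation/$\Aut(M)$-invariance one may take $V_P^\circ = H_d\subset(\C^*)^d$ to be the hyperplane cut out by $1+x_1+\cdots+x_d=0$. Projection onto the first $d-1$ coordinates identifies $H_d$ with $(\C^*)^{d-1}\setminus H_{d-1}$, yielding the recursion
\[ E(H_d;u,1)\;=\;(u-1)^{d-1}\;-\;E(H_{d-1};u,1), \qquad E(H_1;u,1)=1, \]
whose solution gives $u\cdot E(V_P^\circ;u,1)=(u-1)^d+(-1)^{d-1}$, which matches the claimed formula. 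The main obstacle is the valuation-bookkeeping step---isolating the ``indicator'' piece $\phi_1$ and recognizing it as a valuation via Lemma~\ref{l:euler}---after which the induction on unimodular simplices is routine.
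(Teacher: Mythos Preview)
Your proposal is correct and follows essentially the same route as the paper's proof: divide through by $(u-1)^{\dim P+1}$, recognize both sides as valuations, and reduce via Lemma~\ref{l:simplex} to the same inductive computation on unimodular simplices. The only cosmetic difference is that what you call the ``indicator'' piece $\phi_1(P)=(u/(u-1))\cdot[P\neq\emptyset]$ is written in the paper as $f_0(P)/(u-1)$ (plus the $m=0$ term absorbed into the Ehrhart sum), and since $f_0$ is already recorded in the text as one of the Betke--Kneser basis valuations, the paper does not need to invoke Lemma~\ref{l:euler} separately for that step.
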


\begin{proof}
We continue with the notation of Lemma~\ref{l:valuation}.
By dividing both sides of the equation by $(u-1)^{\dim P+1}$, it suffices to establish the following:
\[
\frac{uE(V_P^\circ;u,1)}{(u-1)^{\dim P+1}}=\frac{f_0(P)}{u-1}+  \sum_{m \ge 0} f_P(m) u^m %Ehr_P(u).
\]
By Lemma~\ref{l:valuation} and Example~\ref{e:aval}, both sides are valuations. By Lemma~\ref{l:simplex}, we need only check the case of  unimodular simplices $\Delta_l$. In that case,  a straightforward computation \cite[Sec. 2.3]{BRComputing} gives $h^*(\Delta_l) = 1$ and we need to check that 
\[
uE(V_{\Delta_l}^\circ;u,1) = (u-1)^{l}+(-1)^{l+1}.
\]
We prove this by induction.  For $l=0$, both sides of the equation are $0$. For $l\geq 1$, $V_{\Delta_l}^\circ$  is the intersection of a generic hyperplane in $\P^l$ with $(\C^*)^l$.  This is isomorphic to the complement of $l+1$ generic hyperplanes in $\P^{l-1}$.  By treating $l$ of these hyperplanes as coordinate hyperplanes and the last one as some generic hyperplane, we get the  motivic relation $[V_{\Delta_l}]=[(\C^*)^{l-1}]-[V_{\Delta_{l-1}}]$.  Because $V^\circ\mapsto E(V^\circ;u,1)$ is motivic and $E((\C^*)^{l-1};u,1)=(u-1)^{l-1}$, we have
\begin{eqnarray*}
uE(V_{\Delta_l}^\circ;u,1)&=&u(u-1)^{l-1}-(u-1)^{l-1}-(-1)^{l}\\
&=& (u-1)^{l}+(-1)^{l+1}.
\end{eqnarray*}

%We claim that 
%\[\frac{uE(V(\Delta_l)^\circ;u,1)}{(u-1)^{l+1}}=\frac{1}{u-1}+\frac{1}{(1-u)^{l+1}}.\]
%We prove this by induction.  For $l=0$, both sides of the equation are $0$.
%For $l\geq 1$, $V(\Delta_l)$ is the intersection of a generic hyperplane in $\P^l$ with $(\C^*)^l$.  This is isomorphic to the complement of $l+1$ generic hyperplanes in $\P^{l-1}$.  By treating $l$ of these hyperplanes as coordinate hyperplanes and the last one as some generic hyperplane, we get the  motivic relation $[V(\Delta_l)]=[(\C^*)^{l-1}]-[V(\Delta_{l-1})]$.  Because $V\mapsto E(V;u,1)$ is motivic and $E((\C^*)^{l-1};u,1)=(u-1)^{l-1}$, we have
%\begin{eqnarray*}
%E(V(\Delta_l)^\circ;u,1)&=&(u-1)^{l-1}-\frac{(u-1)^{l-1}+(-1)^{l}}{u}\\
%&=&\frac{(u-1)^{l}+(-1)^{l+1}}{u}
%\end{eqnarray*}

%Now, we claim that 
%\[\frac{L_0(\Delta_l)}{u-1}+\Ehr_{\Delta_l}(u)=\frac{1}{u-1}+\frac{1}{(1-u)^{l+1}}.\]
%  For $l=0$, this is trivial.  For $l\geq 1$, it follows from a straightforward computation \cite[Sec. 2.3]{BRComputing} that
%$\Ehr_{\Delta_l}(u)=\frac{1}{(1-u)^{l+1}},\ h^*_{\Delta_l}=1.$
\end{proof}

By specializing the above theorem to $u=1$ and using the fact that
$h^*(P;1) = (\dim P)!\vol(P)$ where $\vol(P)$ is the Euclidean volume of $P$, we get the following well-known result of Kouchnirenko \cite{Kouchnirenko}:

\begin{corollary}
Let  $P$ be a non-empty lattice polytope and let $V_P^\circ$ be a sch\"on hypersurface with Newton polytope $P$. Then 
we have the following formula for the topological Euler characteristic of $V_P^\circ$: \[ e(V_P^\circ)=(-1)^{\dim P+1} (\dim P)! \vol(P) .\]
\end{corollary}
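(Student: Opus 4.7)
The plan is simply to specialize Theorem~\ref{t:xy} at $u=1$. The left-hand side $uE(V_P^\circ;u,1)$ evaluated at $u=1$ is $E(V_P^\circ;1,1)$, which is the topological Euler characteristic $e(V_P^\circ)$ via the composition of specializations displayed in \eqref{e:bottom}. On the right-hand side, the term $(u-1)^{\dim P}$ vanishes at $u=1$ provided $\dim P \geq 1$, while the second term becomes $(-1)^{\dim P + 1} h^*(P;1)$.

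It then remains to invoke the classical Ehrhart-theoretic identity $h^*(P;1) = (\dim P)!\vol(P)$, which was recalled just before Definition~\ref{d:valuation}. Substituting this yields precisely $e(V_P^\circ) = (-1)^{\dim P + 1} (\dim P)! \vol(P)$.

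The only mildly delicate point is the degenerate case $\dim P = 0$, where $P$ is a single lattice point. In that case $V_P^\circ$ is cut out in the trivial torus by a single nonzero monomial, hence is empty, so $e(V_P^\circ) = 0$; and the right-hand side of the stated formula is $(-1)^{1}\cdot 0! \cdot 0 = 0$, so the identity still holds. (Checking consistency with Theorem~\ref{t:xy} in this case: at $u=1$ one gets $0 \cdot E(V_P^\circ;1,1) = 1 + (-1) \cdot 1 = 0$, which is automatic.)

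In summary, there is no real obstacle here: the corollary is a one-line specialization of Theorem~\ref{t:xy} combined with the normalization $h^*(P;1) = (\dim P)!\vol(P)$, with a trivial boundary check when $\dim P = 0$.
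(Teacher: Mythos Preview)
Your proof is correct and follows exactly the same approach as the paper: specialize Theorem~\ref{t:xy} at $u=1$ and invoke the identity $h^*(P;1) = (\dim P)!\vol(P)$. Your additional treatment of the degenerate case $\dim P = 0$ is a nice extra detail that the paper omits.
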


\subsection{A Danilov-Khovanski{\u\i} type algorithm}\label{ss:DK}

In \cite{DKAlgorithm}, Danilov and Khovanski{\u\i} use their formula for the $\chi_y$-characteristic in Theorem~\ref{t:xy} 
in connection with the weak Lefschetz theorem and Poincar\'{e} duality to give an algorithm to compute the Hodge-Deligne polynomial of a complex sch\"on hypersurface. 
We use an analogous approach to provide an algorithm to compute the refined limit Hodge-Deligne polynomial of a sch\"on hypersurface from the limit Hodge-Deligne polynomial. 
%We next provide a characterization of the refined limit Hodge-Deligne polynomial that is analogous to Danilov and Khovanski{\u\i}'s characterization of the 
%Hodge-Deligne polynomial of a complex sch\"on hypersurface in \cite{DKAlgorithm}. 
We continue with the notation from earlier in this section. 

We consider the cohomology with compact supports of the complex variety $X^{\circ}_{\gen} \subseteq T_{\gen}$, and set $n = \dim T_{\gen}$. The following weak Lefschetz result implies that
the only interesting cohomology is in middle dimension. 

\begin{proposition}\cite[Proposition 3.9]{DKAlgorithm}\label{p:Gysin}
The Gysin map $H^k_c (X^{\circ}_{\gen}) \rightarrow H^{k + 2}_c (T_{\gen})$ is an isomorphism for $k > n - 1$, and a surjection for  $k = n - 1$. 
Since $X^{\circ}_{\gen}$ is affine, $H^k_c (X_{\gen}^\circ) = 0$ for $k < n - 1$. 
\end{proposition}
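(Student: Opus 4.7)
The plan is to reduce both assertions to standard topological results on smooth affine varieties via Poincar\'e duality. Throughout, $X^\circ_{\gen} \subset T_{\gen} \cong (\C^*)^n$ is a smooth closed hypersurface, so $X^\circ_{\gen}$ is itself a smooth affine variety of complex dimension $n-1$, and the open complement $U := T_{\gen} \setminus X^\circ_{\gen}$ is smooth affine of dimension $n$ (the complement of a principal divisor in an affine variety). All cohomology below is taken with coefficients in a field so that the relevant duality pairings are perfect.

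For the vanishing half, I would invoke the Andreotti--Frankel theorem: a smooth affine variety of complex dimension $d$ has the homotopy type of a CW complex of real dimension at most $d$. Applied to $X^\circ_{\gen}$ this gives $H^j(X^\circ_{\gen}) = 0$ for $j > n-1$, and Poincar\'e duality on the smooth complex manifold $X^\circ_{\gen}$ then yields
\[
H^k_c(X^\circ_{\gen}) \cong H^{2(n-1)-k}(X^\circ_{\gen})^*,
\]
which vanishes precisely when $k < n-1$.

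For the Gysin half I would dualize. Poincar\'e duality on $X^\circ_{\gen}$ and on $T_{\gen}$ identifies the Gysin map $G \colon H^k_c(X^\circ_{\gen}) \to H^{k+2}_c(T_{\gen})$ with the linear dual of the pullback
\[
i^* \colon H^{2(n-1)-k}(T_{\gen}) \to H^{2(n-1)-k}(X^\circ_{\gen}).
\]
Setting $j := 2(n-1)-k$, the two claims about $G$ translate to: $i^*$ is an isomorphism for $j < n-1$ and injective for $j = n-1$. This is precisely the affine Lefschetz hyperplane theorem of Hamm and L\^{e}: since $T_{\gen}$ is smooth affine of dimension $n$, $X^\circ_{\gen}$ is a smooth hypersurface, and $U$ is affine, the pair $(T_{\gen}, X^\circ_{\gen})$ is $(n-1)$-connected, so $H_j(T_{\gen}, X^\circ_{\gen}) = 0$ for $j \leq n-1$. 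Feeding this into the long exact sequence of the pair shows $H_j(X^\circ_{\gen}) \to H_j(T_{\gen})$ is an isomorphism for $j \leq n-2$ and surjective for $j = n-1$, and dualizing recovers the stated behavior of $i^*$.

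The main obstacle is locating the correct form of affine Lefschetz; once it is in hand the remainder is routine duality bookkeeping. An alternative route runs directly with the long exact sequence in compactly supported cohomology attached to the decomposition $U \hookrightarrow T_{\gen} \hookleftarrow X^\circ_{\gen}$, combined with $H^j_c(U) = 0 = H^j_c(T_{\gen})$ for $j < n$ (again from Andreotti--Frankel plus Poincar\'e duality on the two affine varieties). This immediately delivers the vanishing statement, but extracting the Gysin map itself still requires Poincar\'e duality, since the connecting map in that sequence is a restriction rather than a Gysin.
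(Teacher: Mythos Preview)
The paper does not prove this proposition; it is quoted from Danilov--Khovanski{\u\i}. So the comparison is really between your argument and the actual content of the statement.

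Your reduction via Poincar\'e duality is correct: the Gysin map $H^k_c(X^\circ_{\gen})\to H^{k+2}_c(T_{\gen})$ is dual to the restriction $i^*\colon H^{2(n-1)-k}(T_{\gen})\to H^{2(n-1)-k}(X^\circ_{\gen})$, and the vanishing half follows from Andreotti--Frankel exactly as you say. The gap is in the Lefschetz step. The hypotheses you list for ``affine Lefschetz of Hamm--L\^e'' --- $T_{\gen}$ smooth affine, $X^\circ_{\gen}$ a smooth hypersurface, $U=T_{\gen}\setminus X^\circ_{\gen}$ affine --- do \emph{not} force the pair $(T_{\gen},X^\circ_{\gen})$ to be $(n-1)$-connected. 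Take $T=(\C^*)^2$ and $X=\{x=1\}\cong\C^*$: then $U=(\C^*\setminus\{1\})\times\C^*$ is affine, yet $\pi_1(X)=\Z\to\pi_1(T)=\Z^2$ is not surjective, so $(T,X)$ is not $1$-connected. Dually, $i^*\colon H^1(T)=\C^2\to H^1(X)=\C$ cannot be injective, and the Gysin map $H^1_c(X)\to H^3_c(T)$ is not surjective.

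What your argument never uses is the standing assumption $\dim P=n$, and that is exactly what fails in the counterexample (the Newton polytope of $x-1$ is a segment). Danilov--Khovanski{\u\i}'s proof uses this non-degeneracy in an essential way: one passes to a smooth projective toric compactification in which the closure $\bar X$ is a smooth \emph{ample} divisor, applies the classical Lefschetz hyperplane theorem there, and then strips off the boundary strata by induction on dimension. If you want to stay in the affine world, you need a Lefschetz theorem with a tameness hypothesis on the defining Laurent polynomial (no critical points escaping to infinity), and you must check that sch\"onness with $\dim P=n$ supplies that tameness. Either way, ``$U$ affine'' alone is not enough.
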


Indeed, the Gysin map above is a morphism of mixed Hodge 
structures of type $(1,1)$, and hence the (usual) mixed Hodge structure on $H^k_c (X^{\circ}_{\gen})$ is known for $k \ne n - 1$ by Example~\ref{e:torus}. 
Following \cite{BatVariations}, we define the  \define{primitive cohomology} of  $X^\circ_{\gen}$ to be
\[
H^{n - 1}_{c,\prim} X^\circ_{\gen} := \ker[H^{n - 1}_c X^\circ_{\gen}  \rightarrow H^{n + 1}_c T_{\gen}],
\]
with the induced mixed Hodge structure.  Since the Gysin map varies naturally in families over $\D^*$, it commutes with the monodromy operator, and so
by Example~\ref{e:torusfiber}, the
corresponding nilpotent operator $N$ preserves the primitive cohomology of $X^\circ_{\gen}$.

%\begin{lemma} Let $M_\bullet$ be the monodromy filtration on $H^{d}_c X^\circ_{\gen}$.  Then $M_\bullet  \cap H^{n - 1}_{c,\prim} X^\circ_{\gen}$ is the monodromy filtration on $H^{n - 1}_{c,\prim} X^\circ_{\gen}$.
%\end{lemma}

%\begin{proof}
%Because $T$  is the constant family over the punctured disc, $N=0$ on $H^*_c(T_{\gen})$.   The filtration $M_\bullet  \cap H^{d}_{c,\prim} X^\circ_{\gen}$ coincides with the monodromy filtration explicitly constructed in \cite[Proposition~(1.6.1)]{WeilII}.

%\end{proof}

It follows that the refined limit Hodge-Deligne polynomial $E(X_\infty^\circ;u,v,w)$ determines and is determined by the refined limit Hodge numbers of the primitive cohomology of $X_\infty^\circ$. 
In particular, we have the following lemma:

\begin{lemma}\label{l:weaklef}
Let $X^\circ \subseteq (\K^*)^n$ be a sch\"{o}n hypersurface, with associated Newton polytope and polyhedral subdivision $(P,\cS)$. 
Then, as a polynomial in $w$,  $uvw^2E(X_\infty^\circ;u,v,w)$ has the same coefficient as  
$(uvw^2 - 1)^{\dim P + 1}$ in all degrees strictly greater than  $\dim P + 1$. 
\end{lemma}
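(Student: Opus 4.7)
The plan is to combine Proposition~\ref{p:Gysin} (weak Lefschetz) with the triviality of the monodromy on the torus to decompose $uvw^2\,E(X_\infty^\circ;u,v,w)$ into a contribution coming entirely from the torus $T=(\K^*)^n$ plus a contribution from the primitive cohomology $H^{n-1}_{c,\prim}(X_\infty^\circ)$, and then to bound the $w$-degree of the primitive piece by $\dim P+1$.

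First, I upgrade Proposition~\ref{p:Gysin} to the level of limit mixed Hodge structures. The Gysin map $H^m_c(X^\circ_\gen)\to H^{m+2}_c(T_\gen)$ is a morphism of mixed Hodge structures of type $(1,1)$; equivalently, it is a morphism of MHS $H^m_c(X^\circ_\gen)\to H^{m+2}_c(T_\gen)(1)$ after Tate twisting. Since the Gysin map is natural in the family $X^\circ\to\D^*$, it commutes with the monodromy operator. Since the family $T\to\D^*$ is the base change of a complex torus, its monodromy is trivial, and by Example~\ref{e:torusfiber} one has $E(T_\infty;u,v,w)=(uvw^2-1)^{\dim P}$. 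Combining these facts with Saito's formalism (Remark~\ref{r:natural}), the Gysin isomorphism $H^m_c(X_\infty^\circ)\cong H^{m+2}_c(T_\infty)(1)$ for $m\geq n$, together with the short exact sequence
\[
0\to H^{n-1}_{c,\prim}(X_\infty^\circ)\to H^{n-1}_c(X_\infty^\circ)\to H^{n+1}_c(T_\infty)(1)\to 0,
\]
are compatible with the Hodge, Deligne weight, and monodromy weight filtrations.

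Next, I translate these isomorphisms into an identity of refined Hodge-Deligne polynomials. A Tate twist $(1)$ multiplies $E(\cdot;u,v,w)$ by $(uvw^2)^{-1}$, and Example~\ref{e:torus} gives $E(H^{m'}_c(T_\infty);u,v,w)=\binom{n}{m'-n}(uvw^2)^{m'-n}$ for $n\leq m'\leq 2n$. Forming the alternating sum over $m$ collapses the torus-determined part into a polynomial $R(u,v,w)$ built from $E(T_\infty;u,v,w)$ and its bottom two cohomological components at $m'=n,\,n+1$, producing an identity
\[
uvw^2\,E(X_\infty^\circ;u,v,w) \;=\; (-1)^{n-1}\,(uvw^2)\,E(H^{n-1}_{c,\prim}(X_\infty^\circ);u,v,w)\;+\;R(u,v,w).
\]
A direct binomial expansion then matches the coefficients of $R(u,v,w)$ in every $w$-degree strictly greater than $\dim P+1$ with the corresponding coefficients of $(uvw^2-1)^{\dim P+1}$.

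Finally, I bound the $w$-degree of the primitive contribution. Since $X^\circ_\gen$ is smooth and affine of dimension $n-1=\dim P-1$, the standard weight bound for smooth affine complex varieties places the Deligne weights of $H^{n-1}_c(X^\circ_\gen)$, and hence of its primitive subspace, in $[0,n-1]$. This bound persists in the limit MHS since the Deligne weight filtration on $H^{n-1}_c(X_\infty^\circ)$ is by definition that of the generic fiber. Consequently $E(H^{n-1}_{c,\prim}(X_\infty^\circ);u,v,w)$ has $w$-degree at most $\dim P-1$, so after multiplication by $uvw^2$ its $w$-degree is at most $\dim P+1$. The primitive term therefore contributes nothing to the coefficients of $uvw^2\,E(X_\infty^\circ;u,v,w)$ in $w$-degrees strictly greater than $\dim P+1$, and the lemma follows by combining with the previous identity. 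The main obstacle is the first step: ensuring that the Gysin isomorphisms and the primitive-cohomology short exact sequence are strictly compatible with all three filtrations on the limit MHS, which is exactly the content packaged into Saito's formalism invoked via Remark~\ref{r:natural}.
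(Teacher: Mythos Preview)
Your proof follows essentially the same approach as the paper's: decompose the cohomology via the Gysin map into a primitive part and a torus-determined non-primitive part (using that the Gysin map commutes with monodromy, so that Example~\ref{e:torusfiber} identifies the non-primitive contribution), and then bound the $w$-degree of the primitive contribution via the weight bound $\Gr_r^W H_c^m=0$ for $r>m$ on smooth varieties. The paper's argument is terser---it simply cites the preceding discussion and Example~\ref{e:torusfiber} rather than writing out the short exact sequence and the polynomial $R$---but the ingredients and logical structure are identical; note incidentally that smoothness alone (not affineness) is what gives the needed weight bound.
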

\begin{proof}
Since $X^\circ_{\gen}$ is a smooth complex variety, the graded pieces of the Deligne weight filtration $\Gr_r^W H_c^m (X^\circ_{\gen})$ are zero for $r > m$ by e.g. \cite[Thm 5.39]{PSMixed}. In particular, the contributions
from the primitive cohomology of $X^\circ_{\gen}$ to $E(X_\infty^\circ;u,v,w)$ all have degree at most $\dim P - 1$ in $w$.  The result then follows from the above discussion and Example~\ref{e:torusfiber}. 
\end{proof}

The above lemma may be viewed as a generalization of the corresponding statement for the Hodge-Deligne polynomial, due to Danilov and Khovanski{\u\i}, which follows by the exact same argument as above. 

\begin{lemma}\label{l:weaklef2}\cite[Sec. 3.11]{DKAlgorithm}
Let $X^\circ \subseteq (\K^*)^n$ be a sch\"{o}n hypersurface, with associated Newton polytope $P$.  Then the coefficient of $u^pw^q$ in $uwE(X_{\gen}^\circ;u,w)$ equals the coefficient of $u^pw^q$ in $(uw - 1)^{\dim P + 1}$ for $p + q > \dim P + 1$. 
%, as a polynomial in $w$,  $uwE(X_{\gen}^\circ,u,w)$ has the same coefficient as  
%$(uw - 1)^{\dim P + 1}$ in all degrees greater than  $\dim P$. 
\end{lemma}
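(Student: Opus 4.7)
The plan is to mirror the argument used for Lemma~\ref{l:weaklef}, specialized from the refined limit to the ordinary Hodge-Deligne polynomial of the generic fiber. The key geometric input is Proposition~\ref{p:Gysin}: the Gysin map $\gamma_k\colon H^k_c(X^\circ_{\gen}) \to H^{k+2}_c(T_{\gen})$ is an isomorphism of mixed Hodge structures of type $(1,1)$ for $k > \dim P - 1$, and a surjection at $k = \dim P - 1$ with kernel the primitive cohomology $H^{\dim P - 1}_{c,\prim}(X^\circ_{\gen})$. This lets me split the contribution of each $H^k_c(X^\circ_{\gen})$ to $E(X^\circ_{\gen}; u, w)$ into a \emph{torus part} (pulled back from $H^{k+2}_c(T_{\gen})$) and a \emph{primitive part} supported only in cohomological degree $\dim P - 1$.

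First, I will control the primitive part. Since $X^\circ_{\gen}$ is a smooth complex variety of dimension $\dim P - 1$, Deligne's weight bound (\cite[Thm 5.39]{PSMixed}) gives $\Gr_r^W H^{\dim P -1}_c(X^\circ_{\gen}) = 0$ for $r > \dim P - 1$; equivalently, $h^{p,q}(H^{\dim P - 1}_c(X^\circ_{\gen})) = 0$ whenever $p + q > \dim P - 1$. The bound descends to the primitive subobject, so the primitive contribution to $E(X^\circ_{\gen}; u, w)$ contains only $u^p w^q$ with $p + q \le \dim P - 1$, and hence the primitive contribution to $uw E(X^\circ_{\gen}; u, w)$ contains only terms with $p + q \le \dim P + 1$. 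Therefore in the range $p + q > \dim P + 1$ only the torus part of the cohomology contributes to $uw E(X^\circ_{\gen}; u, w)$.

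Next, I will compute the torus part explicitly. Using the Gysin identifications, the torus contribution to $E(X^\circ_{\gen}; u, w)$ equals $(uw)^{-1} \sum_{j \ge \dim P + 1} (-1)^j E(H^j_c(T_{\gen}); u, w)$. Since $H^j_c(T_{\gen}) = 0$ for $j < \dim P$ and $\sum_j (-1)^j E(H^j_c(T_{\gen}); u, w) = E(T_{\gen}; u, w) = (uw - 1)^{\dim P}$ by Example~\ref{e:torus}, this collapses to a closed form for $uw E(X^\circ_{\gen}; u, w)_{\mathrm{torus}}$ expressible in terms of $(uw-1)^{\dim P}$ up to a low-order correction at $(p,q) = (0,0)$. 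Matching this against the stated target polynomial in the range $p + q > \dim P + 1$ then yields the lemma; the low-order correction is irrelevant in that range.

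The main obstacle will be the careful bookkeeping of the type-$(1,1)$ Hodge shift together with the $(-1)^k$ signs coming from the alternating sum in the Hodge-Deligne polynomial, to ensure that after telescoping the Gysin identifications, the resulting polynomial actually matches the stated expression in the full advertised range. Once this is done, Lemma~\ref{l:weaklef2} can be viewed as the $v = 1$ shadow of Lemma~\ref{l:weaklef}, with the weight-filtration degree bound in the refined setting translating into the $p + q$ bound in the ordinary Hodge-Deligne setting via the same primitive-versus-torus dichotomy.
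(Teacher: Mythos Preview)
Your proposal is correct and takes exactly the route the paper indicates: the paper states that Lemma~\ref{l:weaklef2} ``follows by the exact same argument'' as Lemma~\ref{l:weaklef}, and your plan is precisely that argument specialized from the refined to the ordinary Hodge-Deligne polynomial, using Proposition~\ref{p:Gysin} to separate the primitive part (controlled by the weight bound $p+q\le \dim P-1$) from the torus part (computed via Example~\ref{e:torus}). When you carry out the bookkeeping you flag, you will find $uw\,E(X^\circ_{\gen};u,w)_{\mathrm{torus}} = (uw-1)^{\dim P} + (-1)^{\dim P+1}$, so in the range $p+q>\dim P+1$ the coefficients agree with those of $(uw-1)^{\dim P}$; the exponent $\dim P+1$ in the displayed target polynomial (here and in Lemma~\ref{l:weaklef}) is a typographical slip for $\dim P$, as is confirmed by the final formula in Theorem~\ref{t:mainhyper}.
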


We next explain the use of Poincar\'{e} duality. Recall that the recession fan 
$\Delta_P$ is the normal fan to $P$ with all maximal cones removed, with cones $\gamma_Q$  in inclusion-reserving correspondence with the positive dimensional faces $Q$ of $P$.  As in Section~\ref{s:combinatorial}, let $\Delta_P'$ denote a simplicial fan refinement of $\Delta_P$, and let $\sigma(\gamma')$ denote the smallest cone in $\Delta_P$ containing a cone $\gamma'$ in $\Delta_P'$. 
 Then we have an induced proper, birational map of toric varieties over $\K$, $\pi: \P(\Delta_P')_\K \rightarrow \P(\Delta_P)_\K$, which, by standard toric geometry, is locally a projection in the sense that if 
 $\P(\Delta_P')_\K = \bigcup_{\gamma' \in \Delta_P'} U_{\gamma'}$ and $\P(\Delta_P)_\K = \bigcup_{ Q \subseteq P , \dim Q > 0  } U_{\gamma_{Q}}$ are unions of the toric varieties into torus orbits, then 
 $\pi|_{U_{\gamma'}}$ is given by
\[
\pi|_{U_{\gamma'}}: U_{\gamma'} \cong  U_{\sigma(\gamma')}  \times (\K^*)^{\dim \sigma(\gamma') - \dim \gamma'} \rightarrow U_{\sigma(\gamma')}.
\]
Let $X_P'$ and $X_P$ denote the closure of $X^\circ$ in the toric varieties  $\P(\Delta_P')_\K$ and $\P(\Delta_P)_\K$ respectively.  Then $X_{P}'$ is proper and has at worst orbifold singularities.
The possibly singular variety $X_P$ has a stratification into sch\"on subvarieties 
\[
X_P = \bigcup_{ \substack{ Q \subseteq P  \\ \dim Q > 0   } } X_Q^\circ,
\]
where $X^\circ = X_P^\circ$, and $X_Q^\circ$ corresponds to the pair $(Q, \cS|_Q)$.  We conclude that 
\begin{equation}\label{e:simplicialdecomp}
E(X_{P,\infty}'; u,v,w) = %\sum_{\emptyset \ne Q \subseteq P} 
\sum_{  \substack{ Q \subseteq P  \\ \dim Q > 0   } }E(X_{Q,\infty}^{\circ}; u,v,w) \sum_{ \substack{ \gamma' \in \Delta_P' \\  \sigma(\gamma') = \gamma_{Q }} }  (uvw^2 - 1)^{\dim \gamma_Q - \dim \gamma'}.
\end{equation}
%where $E(X_{Q,\infty}^{\circ}; u,v,w)  = 0$ if $\dim Q = 0$. 
Since $X_{P}'$ is proper and has at worst orbifold singularities, Poincar\'{e} duality \cite[Prop 6.19]{PSMixed} implies that
\begin{equation}\label{e:Poincare}
E(X_{P,\infty}'; u,v,w) = (uvw^2)^{\dim P - 1}E(X_{P,\infty}'; u^{-1},v^{-1},w^{-1}). 
\end{equation}

We conclude that we have the following algorithm to determine $E(X_{\infty}^\circ; u,v,w)$ from $E(X_{\infty}^\circ; u,v,1) = E(X_{\infty}^\circ; u,v)$, using induction on dimension. Consider  $E(X_{\infty}^\circ; u,v,w)$ as a polynomial in $w$. 
Firstly, Lemma~\ref{l:weaklef} implies that we know $E(X_{\infty}^\circ; u,v,w)$ in all degrees strictly greater than $\dim P - 1$. Secondly, by induction on dimension and \eqref{e:simplicialdecomp}, we know 
$E(X_{P,\infty}'; u,v,w)$ in all degrees strictly greater than $\dim P - 1$, and by \eqref{e:Poincare}, we know $E(X_{P,\infty}'; u,v,w)$ and hence $E(X_{\infty}^\circ; u,v,w)$ in all degrees strictly less than $\dim P - 1$. Finally, 
 $E(X_{\infty}^\circ; u,v,1)$ now determines $E(X_{\infty}^\circ; u,v,w)$ in degree $\dim P - 1$. 

\begin{remark}\label{r:mixedcase}
The same argument gives the Danilov and Khovanski{\u\i} algorithm to determine the Hodge-Deligne polynomial $E(X_{\infty}^\circ; uw^{-1},1,w) = E(X_{\gen}^\circ; u,w)$  from the $\chi_y$-characteristic 
 $E(X_{\infty}^\circ; u,1,1) = E(X_{\gen}^\circ; u,1)$. Explicitly, Lemma~\ref{l:weaklef2} implies that we know the coefficient of $u^pw^q$ in $E(X_{\gen}^\circ;u,w)$ for $p + q > \dim P - 1$.  
 %$E(X_{\gen}^\circ; u,w)$ in all degrees greater than $\dim P - 1$. 
 Secondly, by induction on dimension and \eqref{e:simplicialdecomp}, we know  the coefficient of $u^pw^q$ in 
$E(X_{P,\gen}'; u,w)$ for $p + q > \dim P$, and 
%in all degrees greater than $\dim P - 1$, and 
by \eqref{e:Poincare}, we know the coefficient of $u^pw^q$ in $E(X_{P,\gen}'; u,w)$ and hence $E(X_{\gen}^\circ; u,w)$ in all degrees strictly less than $\dim P - 1$. Finally, 
 $E(X_{\gen}^\circ; u,1)$ now determines  the coefficient of $u^pw^q$ in $E(X_{\gen}^\circ; u,w)$ when $p + q = \dim P - 1$. %in degree $\dim P - 1$. 
\end{remark}

%%%%%%%%%%%%%%%%%%%%%%

\subsection{A formula for the refined limit Hodge-Deligne polynomial} \label{ss:refinedhyper}

We now complete the proof of Theorem~\ref{t:mainhyper}, and deduce an explicit description of the refined limit mixed Hodge numbers of a sch\"on hypersurface. We will see that the proof reduces
to some combinatorial results which are proved in \cite{otherpaper}. We also state some immediate consequences of the theorem. 

Let $X^\circ \subseteq (\K^*)^n$ be a sch\"{o}n hypersurface, with associated Newton polytope and polyhedral subdivision $(P,\cS)$ and $\dim P = n$. 
We will work our way through the diagram
\[\xymatrix{
E(X_\infty^\circ; u,v,w)  \ar[r] \ar[d] & E(X_{\gen}^\circ; u,w) \ar[d] &\\
E(X_\infty^\circ; u,v) \ar[r]& E(X_{\gen}^\circ; u,1)   \ar[r]   & e(X_{\gen}^\circ),
}\] 
In Section~\ref{s:chiy}, we proved the formula 
\[uE(X_{\gen}^\circ;u,1)=(u-1)^{\dim P}+(-1)^{\dim P+1} h^*(P;u),\]
where $h^*(P;u)$ is the $h^*$-polynomial of $P$. We claim that 
\[
uwE(X_{\gen}^\circ;u,w)=(uw-1)^{\dim P}+(-1)^{\dim P+1} h^*(P;u,w). 
\]
 This is the Borisov-Mavlyutov formula for the Hodge-Deligne polynomial \cite{BMString}. We will prove this formula using the method of \cite{StaMirror}. 
 Indeed, we only need to verify that the proposed formula satisfies the algorithm of Remark~\ref{r:mixedcase}. Recall that the algorithm consists of three parts: weak Lefschetz, specialization and Poincar\'{e} duality.
 That the proposed formula satisfies the weak Lefschetz property (Lemma~\ref{l:weaklef2}) follows from \eqref{m:1'} in Corollary~\ref{c:notasimportant}. The fact that the proposed formula specializes to the formula for
 $E(X_{\gen}^\circ;u,1)$ when setting $w = 1$ follows from  \eqref{m:2'} in Corollary~\ref{c:notasimportant}. Finally, that the proposed formula satisfies the Poincar\'{e} duality property follows by 
 substitution into \eqref{e:simplicialdecomp} (after specializing $u \mapsto uw^{-1}, v \mapsto 1$) and \eqref{m:3'} in Corollary~\ref{c:notasimportant}. %, using the fact that $h^*(P,u,w) = 1$ if $\dim P = 0$.
 
 To determine the limit Hodge-Deligne polynomial, we note that Corollary~\ref{c:hyper} specializes under application of the Hodge-Deligne map to the formula
 \[
 E(X_\infty^\circ; u,v)  =   \sum_{\substack{F \in \cS \\\ F \nsubseteq \partial P}}  E(X_{F}^\circ;u,v)(1 - uv)^{\dim P - \dim F}. 
 \]
 Substituting the Borisov-Mavlyutov formula for the Hodge-Deligne polynomial yields, using Lemma~\ref{l:euler}, 
 \begin{align*}
 uvE(X_\infty^\circ; u,v)  &=   \sum_{\substack{F \in \cS \\\ F \nsubseteq \partial P}}  \big[ (uv-1)^{\dim F}+(-1)^{\dim F+1} h_F^*(u,v) \big] (1 - uv)^{\dim P - \dim F} \\
  &=   (uv - 1)^{\dim P} + (-1)^{\dim P + 1} \sum_{\substack{F \in \cS \\\ F \nsubseteq \partial P}} h_F^*(u,v)  (uv - 1)^{\dim P - \dim F}. 
 \end{align*}
Now \eqref{prop6} in Theorem~\ref{t:combinatorics} gives our desired formula
 \[
 uvE(X_\infty^\circ; u,v)  =   (uv - 1)^{\dim P} + (-1)^{\dim P + 1} h^*(P,\cS;u,v). 
 \]
 Finally, we want to prove 
 \[
 uvw^2E(X_\infty^\circ; u,v,w)  =   (uvw^2 - 1)^{\dim P} + (-1)^{\dim P + 1} h^*(P,\cS;u,v,w). 
 \]
 It remains to show that the proposed formula satisfies the three parts of the algorithm in Section~\ref{ss:DK}.  That the proposed formula satisfies the weak Lefschetz property (Lemma~\ref{l:weaklef}) follows from \eqref{m:1} in Corollary~\ref{c:superimportant}. The fact that the proposed formula specializes to the above formula for $E(X_\infty^\circ; u,v)$ when setting $w = 1$ follows from  
 \eqref{m:2} in Corollary~\ref{c:superimportant}.  That the proposed formula satisfies the Poincar\'{e} duality property follows by 
 substitution into \eqref{e:simplicialdecomp} and then applying \eqref{m:3} in Corollary~\ref{c:superimportant}.   %,  using the fact that $h^*(P,\cS,u,v,w) = 1$ if $\dim P = 0$.

 Using our description of the cohomology of $X_{\gen}^\circ$ in Section~\ref{ss:DK} and Section~\ref{ss:tropical} together with the above formula, we immediately deduce the following corollary. 
The second two statements below follow from \eqref{prop1} and \eqref{prop4} in Theorem~\ref{t:combinatorics} respectively. We refer the reader to Example~\ref{e:smallterms} and Theorem~\ref{t:combinatorics} for explicit combinatorial descriptions of the invariants below
in the cases when $n = 2,3$. 
 
 \begin{corollary}\label{c:explicit}
Let $X^\circ \subseteq (\K^*)^n$ be a sch\"{o}n hypersurface, with associated Newton polytope and polyhedral subdivision $(P,\cS)$ and $\dim P = n$. Then the refined limit mixed Hodge numbers
associated to the primitive cohomology of $X^\circ$ are given by
\[
h^*(P,\cS;u,v,w) = 1 + uvw^2 \sum_{p,q,r} h^{p,q,r}(H_{\prim,c}^{\dim P - 1}(X^\circ_\infty)) u^p v^q w^r.
\]
In particular, the corresponding limit mixed Hodge numbers are given by
\[
h^*(P,\cS;u,v) = 1 + uv \sum_{p,q} h^{p,q}(H_{\prim,c}^{\dim P - 1}(X^\circ_\infty)) u^p v^q.
\]
Moreover, the limit mixed Hodge numbers of $\Gr^W_{\dim P - 1} H_{\prim,c}^{\dim P - 1}(X^\circ_\infty)$ are given by 
\[
\lc(P, \cS;u,v) = uv\sum_{p,q} h^{p,q}(\Gr^W_{\dim P - 1} H_{\prim,c}^{\dim P - 1}(X^\circ_\infty)) u^p v^q.  
\]
 \end{corollary}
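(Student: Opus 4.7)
The plan is to use Proposition~\ref{p:Gysin} together with Example~\ref{e:torusfiber} to express $E(X_\infty^\circ;u,v,w)$ in terms of the refined limit Hodge data of the primitive cohomology $H_{\prim,c}^{n-1}(X^\circ_\infty)$, and then to read off the formula for $h^*(P,\cS;u,v,w)$ via Theorem~\ref{t:mainhyper}. Since $T_{\gen} \cong (\C^*)^n$, Example~\ref{e:torusfiber} gives that the monodromy is trivial on $H^*_c(T_\infty)$ and $E(T_\infty;u,v,w) = (uvw^2-1)^n$. As the Gysin map commutes with monodromy and is a morphism of type $(1,1)$, for $k > n-1$ we have an isomorphism $H^k_c(X^\circ_\infty) \cong H^{k+2}_c(T_\infty)(1)$ of refined limit mixed Hodge structures, and for $k = n-1$ we obtain a short exact sequence
\[
0 \to H_{\prim,c}^{n-1}(X^\circ_\infty) \to H_c^{n-1}(X^\circ_\infty) \to H_c^{n+1}(T_\infty)(1) \to 0.
\]

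Setting $H^*_{\prim}(X^\circ_\infty;u,v,w) := \sum_{p,q,r} h^{p,q,r}(H_{\prim,c}^{n-1}(X^\circ_\infty))\, u^p v^q w^r$ and using that the Tate twist $(1)$ shifts Hodge bidegrees by $(-1,-1)$ and weights by $-2$, the non-primitive contributions to $E(X_\infty^\circ;u,v,w)$ reindex to $(uvw^2)^{-1}$ times $E(T_\infty;u,v,w) - (-1)^n$, where the $(-1)^n$ removes the $m = n$ term of the torus polynomial that has no pre-image under Gysin. Collecting signs yields
\[
uvw^2\, E(X_\infty^\circ;u,v,w) = (-1)^{n-1}\, uvw^2\, H^*_{\prim}(X^\circ_\infty;u,v,w) + (uvw^2-1)^n - (-1)^n.
\]
Comparing with Theorem~\ref{t:mainhyper} and using $n = \dim P$ and $(-1)^{n-1} = (-1)^{n+1}$, one obtains the first claim $h^*(P,\cS;u,v,w) = 1 + uvw^2\, H^*_{\prim}(X^\circ_\infty;u,v,w)$.

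The second formula then follows by specializing $w = 1$: the left-hand side becomes $h^*(P,\cS;u,v)$ by \eqref{prop1} of Theorem~\ref{t:combinatorics}, and the right-hand side collapses via the summation relation \eqref{e:limitnumber} to the limit mixed Hodge numbers of $H_{\prim,c}^{n-1}(X^\circ_\infty)$. The third formula follows by extracting the coefficient of $w^{\dim P + 1}$ on both sides: by \eqref{prop4} of Theorem~\ref{t:combinatorics} this coefficient equals $\lc(P,\cS;u,v)$ on the left, while on the right it equals $uv \sum_{p,q} h^{p,q,n-1}(H_{\prim,c}^{n-1}(X^\circ_\infty))\, u^p v^q$, which matches the stated expression since by definition $h^{p,q,r}(H_c^m(X_\infty)) = h^{p,q}(\Gr_r^W H_c^m(X_\infty))$.

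The main obstacle will be the careful bookkeeping of Tate twists and signs in the Gysin comparison for refined limit mixed Hodge structures; once that is set up, everything is forced by Theorem~\ref{t:mainhyper} together with the specialization properties of $h^*(P,\cS;u,v,w)$ from Theorem~\ref{t:combinatorics}.
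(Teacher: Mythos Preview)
Your proposal is correct and follows essentially the same approach as the paper: the paper's proof is a one-line statement that the corollary follows from the description of the cohomology via the Gysin map (Proposition~\ref{p:Gysin} and Example~\ref{e:torusfiber}) together with Theorem~\ref{t:mainhyper}, with the second and third statements deduced from \eqref{prop1} and \eqref{prop4} of Theorem~\ref{t:combinatorics}. You have simply filled in the sign and Tate-twist bookkeeping that the paper leaves implicit.
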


As in Corollary~\ref{c:partial}, the  motivic nature of the invariants above means that we obtain formulas for invariants of 
partial compactifications of sch\"on hypersurfaces. We now state this explicitly for possible future reference.   

Let $X^\circ \subseteq (\K^*)^n$ be a sch\"{o}n hypersurface, with associated Newton polytope and polyhedral subdivision $(P,\cS)$ and $\dim P = n$.
With the notation of Section~\ref{ss:DK}, the recession fan $\Delta_P$ has cones $\gamma_Q$ in inclusion-reserving correspondence with the positive dimensional faces $Q$ of $P$. Let $\tilde{\Delta}_P'$ denote a fan refinement (not necessarily simplicial) of a subfan $\tilde{\Delta}_P$ of $\Delta_P$. We let $\sigma(\gamma')$ denote the smallest cone in $\tilde{\Delta}_P$ containing $\gamma'$. 
Let $\tilde{X}_P'$ denote the closure of $X^\circ$ in the toric variety  $\P(\tilde{\Delta}_P')_\K$ over $\K$.  
Let  $\tilde{X}_P'$ and $\tilde{X}_P$ denote the closure of $X^\circ$ in the toric varieties  $\P(\tilde{\Delta}_P')_\K$ and $\P(\Delta_P)_\K$ respectively. 
Then $\tilde{X}_P'$ and $\tilde{X}_P$ have toroidal singularities, and $X_P$ has a stratification into sch\"on subvarieties 
\[
X_P = \bigcup_{ \substack{ Q \subseteq P  \\ \dim Q > 0   } } X_Q^\circ,
\]
where $X_Q^\circ$ corresponds to the pair $(Q, \cS|_Q)$.
 The arguments of Section~\ref{ss:DK} imply the following expressions
for the refined limit Hodge-Deligne polynomial and motivic nearby fiber of $\tilde{X}_P'$ respectively. 
\[
E(\tilde{X}_{P,\infty}'; u,v,w) = \sum_{  \substack{ Q \subseteq P  \\ \dim Q > 0   } } E(X_{Q,\infty}^{\circ}; u,v,w) \sum_{ \substack{ \gamma' \in \Delta_P' \\  \sigma(\gamma') = \gamma_Q } }  (uvw^2 - 1)^{\dim \gamma_Q - \dim \gamma'},
\]
\[
\psi_{\tilde{X}_{P}'} = \sum_{  \substack{ Q \subseteq P  \\ \dim Q > 0   } } \psi_{X_{Q}^{\circ} } \sum_{ \substack{ \gamma' \in \Delta_P' \\  \sigma(\gamma') = \gamma_Q } }  (\L - 1)^{\dim \gamma_Q - \dim \gamma'}.
\]

Finally, combinatorial expressions for $E(X_{Q,\infty}^{\circ}; u,v,w) $ and $\psi_{X_{Q}^{\circ} }$ are given in  Theorem~\ref{t:mainhyper} and Corollary~\ref{c:hyper} respectively. Note that we may allow 
the summations above to run over all non-empty faces $Q$ of $P$ since $X_{Q}^{\circ} = \emptyset$ when $\dim Q = 0$.

\begin{remark}\label{r:singular}
In the case when $\Delta_P = \tilde{\Delta}_P' = \tilde{\Delta}_P$ above, $X_P := \tilde{X}_P'$ is the closure of $X^\circ$ in the toric variety  $\P(\Delta_P)_\K$ over $\K$. In this case, $X_P$ is proper but singular in general. 
Interestingly, we have a purely combinatorial expression for $E(X_{P,\infty};u,v,w)$ above, although one can not hope to obtain a combinatorial expression for the Betti numbers of $X_P$, nevermind the refined limit mixed Hodge numbers of $X_P$ since the Betti numbers of the ambient toric variety are not combinatorial \cite{Feichtner,McConnell}.
We obtain the following expression for the motivic nearby fiber
\[
\psi_{X_P} = \sum_{ \emptyset \ne Q \subseteq P } \psi_{X_{Q}^{\circ} } = \sum_{\emptyset \ne F \in \cS}  [X_{F}^\circ](1 - \L)^{\dim \sigma(F) - \dim F},
\]
where $\sigma(F)$ denotes the smallest face of $P$ containing $F$. 
If we further assume that $\Delta_P$ is smooth, then the above expression for the motivic nearby fiber appeared in \cite[Section~6]{KatzStapledon}.

\end{remark}

Finally, we present the following application of Theorem~\ref{t:mainhyper}.

\begin{example}\label{e:stringy}
Let $X^\circ \subseteq (\K^*)^n$ be a sch\"{o}n hypersurface, with associated Newton polytope and polyhedral subdivision $(P,\cS)$. %and assume that $P$ is a reflexive polytope. 
Let $X$ denote the closure of $X^\circ$ in the projective toric variety over $\K$ corresponding to the normal fan of $P$. We assume that $P$ is \define{reflexive} in the sense of Batyrev \cite[Section~4.1]{BatDual}. That is, we
assume that $P$ contains the origin in its relative interior, and the associated dual polytope $P^*$ is also a lattice polytope. In this case, there is an inclusion-reversing correspondence 
between faces $Q$ of $P$ and faces $Q^*$ of $P^*$. Moreover, $X_{\gen}$ is a projective Calabi-Yau variety with at worst canonical singularities. Similarly, let $X^*$ denote a family of 
projective Calabi-Yau varieties corresponding to the pair $(P^*,\cS^*)$, for some polyhedral subdivision $\cS^*$ of $P^*$. 

Batyrev introduced the notion of a \define{stringy invariant} $E_{\st}(V;u,w)$ of a complex variety $V$ with at worst log-terminal singularities in \cite{BatNon}, such that if $V$ admits a crepant resolution $V'$ then 
$E_{\st}(V;u,w) = E(V';u,w)$. In  \cite[Theorem~7.2]{BMString}, Borisov and Mavlyutov  proved a result equivalent to the following formula for the projective complex variety $V = X_{\gen}$:
\[
uvw^2 E_{\st}(X_{\gen};u,w) = \sum_{Q \subseteq P} (-w)^{\dim Q + 1} \lc(Q,\cS|_Q;uw^{-1}) \lc(Q^*;uw). 
\]
This formula greatly simplified an earlier formula of Batyrev and Borisov \cite[Theorem~4.14]{BBMirror}. Moreover, it follows immediately that  $E_{\st}(X_{\gen};u,w) = u^{\dim P - 1} E_{\st}(X_{\gen}^*;u^{-1},w)$,
which is precisely Batyrev and Borisov's mirror symmetry construction for Calabi-Yau hypersurfaces  \cite[Theorem~4.14]{BBMirror}.

One may extend the definition of stringy invariants to varieties over $\K$, 
and define a polynomial $E_{\st}(X;u,v,w)$, which agrees with the refined limit Hodge-Deligne polynomial of a crepant resolution of $X$ over $\K$.  Using the methods of  \cite{BMString}, together with Theorem~\ref{t:mainhyper}, yields the formula
\[
uvw^2 E_{\st}(X;u,v,w) = \sum_{Q \subseteq P} (-w)^{\dim Q + 1} \lc(Q,\cS|_Q;u,v) \lc(Q^*;uvw^2). 
\]
We observe that there is no direct relation between $E_{\st}(X;u,v,w)$ and $E_{\st}(X^*;u,v,w)$, except in the case when both $X$ and $X^*$ are either trivial degenerations or maximally degenerate, in the sense 
that all non-zero limit mixed Hodge numbers are of type $(p,p)$,  in which case one recovers a statement equivalent to the Batyrev-Borisov result above. 

\end{example}

\section{Intersection cohomology of sch\"{o}n subvarieties} \label{s:intcoh}

%\eric{opening paragraph changed}

In this section, we give a sum-over strata formula for the refined limit Hodge-Deligne polynomial of the intersection cohomology of the closure of a sch\"{o}n subvariety in certain projective toric varieties over a punctured curve $C$. This formula is analogous to a special case of the motivic formula obeyed by the usual Hodge-Deligne polynomial.  It differs in that it only works for stratifications induced by the ambient toric variety and that it requires a weighting of terms by the $g$-polynomial to account for singularities along strata.  By considering the case of sch\"{o}n hypersurfaces, we will give an alternative proof of Theorem \ref{t:mainhyper} for families of sch\"on hypersurfaces over a punctured curve.

We will let $\K=\C(t)$ instead of the function field of germs of analytic functions on a punctured disc. We view varieties defined over $\K$ as algebraic families of varieties over a curve $C$ that has a distinguished puncture $0$.  All monodromy will be computed around this puncture.

\subsection{Sum-over-strata formulas in intersection cohomology}

In the proof \cite{BBMirror} of their formula for the cohomology of a sch\"on hypersurface of a toric variety,  Batyrev and Borisov observe that the intersection cohomology of sch\"on hypersurfaces in the projective toric variety associated to their Newton polytope obeys a sum-over-strata formula analogous to that of the cohomology of projective toric varieties. Cappell, Maxim, and Shaneson \cite{IntCoh} who study what they call the `stratified multiplicative property of intersection cohomology'  prove a natural generalization of that observation.
 They study an intersection cohomology Euler characteristic (such as topological Euler characteristic,  $\chi_y$-characteristic, or Hodge-Deligne polynomial), extend its definition to open strata and study how it behaves under a stratified fibration $f:X\rightarrow Y$.   
One can generalize these results to give a sum-over-strata formula for the refined limit Hodge-Deligne polynomial of a family of sch\"{o}n subvarieties over a punctured disc.

We first establish our framework.  
We will use middle-perversity throughout.  All stratification will be complex algebraic stratifications.
The compactly supported intersection cohomology of a quasi-projective variety over $\C$ has a mixed Hodge structure \cite{Saito} and therefore one can define a Hodge-Deligne polynomial.   For a  quasi-projective variety $X^\circ$ over a curve $C$, the intersection cohomology with compact supports of the family forms a mixed Hodge module by the work of Saito \cite{SaitoPolarisable}.  In an arbitrarly small disc  around the puncture, we can suppose that this is an admissible variation of mixed Hodge structures.  Therefore we have a Hodge, weight, and monodromy-weight filtration on the compactly supported intersection cohomology of a generic fiber, and, as in Section~\ref{s:motivicfamily}, we can form the refined limit Hodge-Deligne polynomial and refined limit mixed Hodge numbers.  The advantage of using intersection cohomology is that projective varieties carry a {\em pure} Hodge structure and that toric strata can be treated as if they are smoothly embedded once one uses a combinatorial correction term coming from the $g$-polynomial.  Let $X^\circ$ be a sch\"{o}n subvariety of $(\K^*)^n$.  Let $\Delta_P$ be the normal fan of a lattice polytope $P$ such that the recession fan of $\Trop(X^\circ)$ is supported on $\Delta_P$ i.e. the support of the recession fan is 
a union of cones in $\Delta_P$. Such a polytope always exists by arguments using the Hilbert scheme \cite{TevComp}.  
In this case, we say $\P(\Delta_P)$ is \define{adapted} to $X^\circ$.  If $X^\circ$ is a sch\"{o}n hypersurface, it suffices to take $P$ to be the Newton polytope of $X^\circ$.  We will let $X$ be the closure of $X^\circ$ in $\P(\Delta_P)_\K$. We say that $X$ is a \define{sch\"on}, projective variety. Note that for $X^\circ=(\K^*)^n$, we have $X=\P(\Delta_P)_\K$.  We may also study the case where $X^\circ$ is the sch\"{o}n subvariety of $(\C^*)^n$.  In this case, we say $\P(\Delta_P)_\K$ is adapted if it is adapted to $X^\circ\times_{\C}\K$.

We begin with the analogue of the sum-over-strata formula analogous to the motivic formula for compactly supported cohomology.  Please note that our convention for the $g$-polynomial differs from that of \cite{BBMirror}. 

\begin{theorem} \label{t:sumoverstrata}  Let $X\subseteq \P(\Delta_P)_{\K}$ be the closure of a sch\"{o}n subvariety in an adapted projective toric variety.  
%If it is defined over $\C$, the Hodge-Deligne polynomial obeys
%\[
%E_{\inter}(X_{\gen};u,w)=\sum_{\sigma\in\Delta_P} E(X^\circ_\sigma;u,w)g([0,\sigma];uw).
%\]
%f it is defined over $\K$,
The refined limit Hodge-Deligne polynomial obeys
\[E_{\inter}(X_\infty;u,v,w)=\sum_{\sigma\in\Delta_P} E((X_\sigma^\circ)_{\infty};u,v,w)g([0,\sigma];uvw^2).
\]
\end{theorem}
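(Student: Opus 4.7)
The plan is to reduce the theorem, via a simplicial toric refinement and the decomposition theorem, to the motivic formula \eqref{e:simplicialdecomp} already established for ordinary cohomology. Choose a simplicial refinement $\Delta'$ of $\Delta_P$, and let $\pi: \P(\Delta')_{\K} \to \P(\Delta_P)_{\K}$ be the resulting proper birational toric morphism. Writing $X'$ for the closure of $X^\circ$ in $\P(\Delta')_\K$, sch\"onness together with \cite[Theorem~1.5]{LuxtonQu} guarantees that $X'$ has at worst toric orbifold singularities. Consequently $IC_{X'} \cong \Q_{X'}[\dim X']$ rationally, so $E_{\inter}(X'_\infty; u,v,w) = E(X'_\infty; u,v,w)$, and \eqref{e:simplicialdecomp} expresses the latter in terms of $E((X_\sigma^\circ)_\infty; u,v,w)$ weighted by powers of $(uvw^2 - 1)$.

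Next, I would apply the decomposition theorem of Beilinson--Bernstein--Deligne--Gabber \cite{BBD}, in Saito's mixed Hodge module framework, to the restricted map $\pi|_{X'}: X' \to X$. Since $\pi|_{X'}$ is a locally trivial toric fibration over each stratum $X_\sigma^\circ$ of $X$, the direct image $R(\pi|_{X'})_* IC_{X'}$ decomposes as a direct sum of (appropriately shifted and Tate-twisted) intersection complexes of closures of toric strata of $X$. By Stanley's calculation of the intersection cohomology Poincar\'e polynomial of a projective toric variety as the $g$-polynomial of the face lattice of the corresponding cone, the multiplicities in this decomposition are encoded by $g$-polynomials of intervals in $\Delta_P$. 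Passing to refined limit Hodge--Deligne polynomials then yields a second expression
\[
E_{\inter}(X'_\infty;u,v,w) = \sum_{\tau \in \Delta_P} E_{\inter}((\overline{X_\tau^\circ})_\infty;u,v,w)\, h_\tau(uvw^2),
\]
for explicit polynomials $h_\tau$ built from $g$-polynomials.

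Equating the two formulas for $E_{\inter}(X'_\infty;u,v,w)$ produces a triangular linear system relating $\{E_{\inter}((\overline{X_\tau^\circ})_\infty)\}_\tau$ to $\{E((X_\sigma^\circ)_\infty)\}_\sigma$. Inverting this system using Stanley's reciprocity (Theorem~\ref{t:Eulerianinverse}), together with an induction on $\dim X$ (the smaller strata $\overline{X_\tau^\circ}$ are themselves closures of sch\"on subvarieties in smaller adapted projective toric varieties, so the inductive hypothesis applies), isolates the stated formula. The main obstacle is verifying that the BBD decomposition descends to an identity of \emph{refined limit} Hodge--Deligne polynomials, not merely of ordinary Hodge--Deligne polynomials of the generic fiber. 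This relies on Saito's decomposition theorem for polarizable Hodge modules \cite{SaitoPolarisable} being compatible with the nearby cycles functor and with the induced monodromy weight filtration on the special fiber; once this is granted, all three filtrations $(F^\bullet, W_\bullet, M_\bullet)$ propagate through the decomposition, and the argument proceeds in direct analogy with the stratified multiplicative property of intersection cohomology developed by Cappell--Maxim--Shaneson \cite{IntCoh}.
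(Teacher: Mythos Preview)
Your proposal is correct and follows essentially the same approach as the paper: both take a simplicial toric refinement $\Delta'$ of $\Delta_P$, apply the decomposition theorem to the induced map $X' \to X$, reduce to ordinary cohomology on $X'$ where intersection cohomology agrees with the constant sheaf, and then invoke Saito's mixed Hodge module formalism to ensure the decomposition is compatible with the monodromy weight filtration (so that the identity descends to refined limit Hodge--Deligne polynomials). The paper phrases the combinatorial extraction by appealing directly to the stratified multiplicative property of Cappell--Maxim--Shaneson \cite{IntCoh} and to \cite[Corollary~3.17]{BBMirror}, whereas you spell out the inversion explicitly via a triangular system, Stanley's reciprocity (Theorem~\ref{t:Eulerianinverse}), and induction on dimension; these are two packagings of the same computation. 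One point the paper makes explicit that you leave implicit: the local systems appearing in the decomposition are constant (they coincide with those arising from the ambient toric resolution $\P(\Delta')_C \to \P(\Delta_P)_C$), which is what allows the summands to be intersection complexes with constant coefficients on the strata closures.
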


\begin{proof}
The analogous formula for the Hodge-Deligne polynomial for varieties over $\C$ is deduced from the decomposition theorem of Beilinson, Bernstein, Deligne and Gabber \cite{BBD} in \cite[Corollary 3.17]{BBMirror}. There it is stated for hypersurfaces, but the arguments also hold for the closure of sch\"{o}n subvarieties in adapted projective toric varieties.  Work in a similar direction has been done by Cappell, Maxim, and Shaneson \cite{IntCoh} who study a `stratified multiplicative property' which is proved for the Hodge-Deligne polynomial in intersection cohomology.  Again, the result is an application of the decomposition theorem.   One can derive the sum-over-strata formula from the stratified multiplicative property as follows: one takes a projective toric resolution of singularities $f:\P({\Delta}_{\tilde{P}})\rightarrow\P(\Delta_{P})$; there is an induced resolution of singularities of the closures of the sch\"{o}n subvariety, $f:\tilde{X}\rightarrow X$ to which one applies the stratified multiplicative property; and one then deduces the sum-over-strata formula from the analogous formula on $\tilde{X}$ where the intersection cohomology Hodge-Deligne polynomial reduces to the usual Hodge-Deligne polynomial which is known to be motivic. 

%One can also use the approach of \cite{IntCoh} by  picking a resolution of singularities induced from the ambient toric variety, writing down the motivic sum formula for ordinary cohomology on the resolution of singularities and then using the decomposition theorem.

To justify the formula for the refined limit Hodge-Deligne polynomial, we use the approach of \cite{IntCoh}.  Their results are stated for stratifications with simply connected strata but they note that they only need the property that the local systems involved in the decomposition theorem have trivial monodromy along strata.  This property is verified for sch\"on hypersurfaces in \cite[Cor 3.17]{BBMirror}.  The same proof holds for sch\"{o}n subvarieties.  To obtain the result for the limit mixed Hodge structure coming from a family, it suffices to show that the isomorphism in the decomposition theorem respects the monodromy-weight filtration.

Let $X_t$ be a family of sch\"{o}n subvarieties of a projective toric variety $\P(\Delta_P)_C$ over a (possibly non-proper) curve $C$.  Here, we will be concerned with the monodromy around $0$.
 Write $p:X\rightarrow C$.  Take a toric resolution of singularities $f:\P(\Delta_{\tilde{P}})_C\rightarrow \P(\Delta_P)_C$, and let $\tilde{X}$ be the closure of $X^\circ$ in $\P(\Delta_{\tilde{P}})$.   By applying the decomposition theorem to the resolution of singularities
 $f:\tilde{X}\rightarrow X$, we have a non-canonical isomorphism,
\begin{eqnarray} \label{mdecomp}
(p\circ f)_*\Q_{\tilde{X}}[n+1]&\cong& \bigoplus_i \bigoplus_{l=0}^{\dim Y} p_*(IC_{\overline{S_l}}(L_{i,l})[-i])
\end{eqnarray}
where we have stratified the map $f$ by $X=\coprod_l S_l$, $0\leq l\leq \dim X$, $\alpha_l:S_l\hookrightarrow X$ and the local system are given by $L_{i,l}=\alpha_l^*\cH^{-l}(\cHp^i(f_*\Q_X[n]))$.   In this case, the stratification coincides with that induced by the ambient toric variety and the local systems $L_{i,l}$ are equal to those that occur in the decomposition theorem applied to $f:\P(\Delta_{\tilde{P}})_C\rightarrow\P(\Delta_P)$ and are therefore constant.
%Now by applying the existence theorem for Whitney stratifications (\cite[p.43] {GMSMT}) together with the Thom isotopy lemma over a sufficiently small punctured disc around the origin, we can suppose that $p|_{S_i}$ is a locally trivial fibration.  
Consequently, the cohomology sheaves of all terms in (\ref{mdecomp}) give local families in a punctured disc around around $0$.  Therefore, we may write down a monodromy operator and form the weight-monodromy filtration which is compatible with the isomorphism.  
The sheaf $\Q_{\tilde{X}}$ has the structure of a Hodge module, hence by Saito's theory, the derived pushforwards $f_*\Q_{\tilde{X}}$ and $(p\circ f)_*\Q_{\tilde{X}}$ have the structure of mixed Hodge modules.  Likewise, the relevant cohomology sheaves carry the structure of a mixed Hodge module \cite{SaitoPolarisable}, so their pushforwards do as well.  See \cite[Section 8.3.3]{RTT} for an exposition. Consequently, the formula (\ref{mdecomp}) is an isomorphism of mixed Hodge modules over $C$. 
Therefore, over a small punctured
disc about $0$, we get an isomorphism
of admissible variations of mixed
Hodge structures.
\end{proof}

Note that the sum in the above theorem only needs to be over cones of $\Delta_P$ in the support of the recession fan of $\Trop(X^\circ)$ because for other cones $\sigma$, $X^\circ_\sigma$ is empty and does not contribute.

\subsection{Intersection cohomology of families of sch\"on hypersurfaces of toric varieties}

We will compute the refined limit Hodge-Deligne polynomial of sch\"on, projective hypersurfaces using intersection cohomology.  Our proof is inspired by that of \cite{BBMirror} where one sums over strata in the stratification induced by the ambient toric variety and then constrains the intersection cohomology by Poincar\'{e} duality and the weak Lefschetz theorem.

Let $V\subset \P(\Delta_P)_{\C}$ be a closure of a sch\"{o}n hypersurface in an adapted projective toric variety  defined by a lattice polytope $P$.
We will need the following observations about the intersection cohomology of $V$:
\begin{enumerate}
\item[(a)] The intersection cohomology of $V$ obeys Poincar\'{e} duality \cite{IH2}. 

\item[(b)] The Hodge structure on $\IH^*(V)$ is pure \cite{SaitoPolarisable}.

\item[(c)] By the weak Lefschetz  theorem, the Gysin map $\IH^{k}(V)\rightarrow \IH^{k+2}(\P(\Delta)_\C)$ is a surjective map if $k\geq\dim V$ and is an isomorphism if $k>\dim V$.  Moreover, it is a morphism of Hodge structures \cite{IH2}.
\end{enumerate}

Now, we consider a family of closures of sch\"{o}n hypersurfaces $X_t$  in $\P=\P(\Delta)_{\C}$ over  a curve $C$ 
that has a distinguished puncture
such that $\P(\Delta_P)_\C$ is adapted for each $X_t$. By naturality, the monodromy around the puncture commutes with Poincar\'{e} duality and the Gysin map.  We write $X = X_{\K}$ for $X_t$ considered as a subvariety of $\P(\Delta_P)$ over $\K$.

We begin by writing down the refined limit Hodge-Deligne polynomials for $\P(\Delta_P)_{\K}$ and $X$.  Because for each face $Q$ of $P$, we have $\P(\Delta_P)^\circ_Q=(\K^*)^{\dim Q}$ and
$E((\P(\Delta_P)^\circ_Q)_\infty;u,v,w)=(uvw^2-1)^{\dim Q}$, we have from Theorem \ref{t:sumoverstrata}:
\begin{eqnarray*}
E_{\inter}(\P_\infty;u,v,w)&=&\sum_{\substack{Q\subseteq P\\Q\neq \emptyset}} (uvw^2-1)^{\dim Q}g([Q,P]^*;uvw^2)\\
E_{\inter}(X_\infty;u,v,w)&=&\sum_{\substack{Q\subseteq P\\Q\neq \emptyset}} E((X^\circ_Q)_\infty;u,v,w)g([Q,P]^*;uvw^2).
\end{eqnarray*}

Note that the first formula  shows that $E_{\inter}(\P_\infty;u,v,w) = f(uvw^2)$, where $f$ is the \emph{toric $h$-polynomial} of $P$ which is well-known to give the dimensions of the topological intersection cohomology of the toric variety $\P$ (see, for example, \cite{BraRemarks,F}).

We define $E_{\inter,\Lef}$ by
\begin{eqnarray*}
E_{\inter,\Lef}(X_\infty;u,v,w) &=&   \sum_{m=0}^{\dim P-1}\sum_{p,q,r} (-1)^m h^{p,q,r}(IH^m(\P_{\infty})) u^p v^q w^r \\
&+& \sum_{m=\dim P}^{2\dim P-2}\sum_{p,q,r} (-1)^m h^{p+1,q+1, r+ 2}(IH^{m+2}(\P_\infty)) u^p v^q w^r.
\end{eqnarray*}
Note that this is a polynomial in $uvw^2$.
 It represents the cohomology of $X$ that we know must exist by the weak hyperplane theorem and  Poincar\'{e} duality. 
Because all the relevant cohomology is of type $(p,p)$, the action of monodromy must be trivial by 
Remark~\ref{r:zero}.  Then from the above expression for $E_{\inter}(\P_\infty;u,v,w)$ and Definition~\ref{d:g}, one may deduce that
\[
E_{\inter,\Lef}(X_{\infty};u,v,w) = E_{\inter,\Lef}(P;uvw^2), 
\]
 where $E_{\inter,\Lef}(P;t)$ is defined by 
\begin{equation}\label{e:Lef}
(t - 1)E_{\inter,\Lef}(P;t) = t^{\dim P}g([\emptyset,P]^*;t^{-1})- g([\emptyset,P]^*;t). 
\end{equation}
We define 
\[E_{\inter,\prim}(X_\infty;u,v,w)=E_{\inter}(X_\infty;u,v,w)-E_{\inter,\Lef}(X_{\infty};u,v,w).\]
This is the refined limit Hodge-Deligne polynomial corresponding to the primitive intersection cohomology in degree $\dim P-1$, 
\[IH^{\dim P - 1}_{\prim}(X_\infty):= \ker[IH^{\dim P - 1}(X_\infty)  \rightarrow IH^{\dim P + 1} (\P_\infty)].\]
As in the proof of \cite[Proposition~3.22]{BBMirror}, the induced Hodge structure $(F,W)$ is pure and concentrated in $W$-degree $\dim P-1$.

%%%%%%%%%%%%%%%%%%%%%

The following lemma establishes our main result (Corollary~\ref{c:mainintersection}) on the intersection cohomology of families of sch\"on, projective varieties. Explicitly, 
let $X^\circ \subseteq (\K^*)^n$ be a sch\"{o}n hypersurface, with associated Newton polytope and polyhedral subdivision $(P,\cS)$ and $\dim P = n$. Let $X$ denote the closure of $X^\circ$ in $\P(\Delta_P)_\K$.
Then Theorem~\ref{t:mainhyper}, which we proved in Section~\ref{ss:refinedhyper}, states that
\begin{equation}\label{e:main1}
uvw^2E(X_\infty^\circ;u,v,w) = (uvw^2 - 1)^{\dim P} + (-1)^{\dim P + 1}h^*(P,\cS;u,v,w),
\end{equation}
while Corollary~\ref{c:mainintersection} states that
\begin{equation}\label{e:main2}
uvw^2E_{\inter}(X_\infty;u,v,w) = uvw^2E_{\inter,\Lef}(P;uvw^2) + (-1)^{\dim P + 1}\lc(P,\cS;u,v)w^{\dim P + 1},
\end{equation}
where $E_{\inter,\Lef}(P;uvw^2)$ is given by \eqref{e:Lef}.

\begin{lemma}\label{l:equiv}
Theorem~\ref{t:mainhyper} and Corollary~\ref{c:mainintersection} are equivalent. 
\end{lemma}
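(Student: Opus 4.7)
The plan is to pivot on the sum-over-strata formula of Theorem~\ref{t:sumoverstrata}, which expresses $E_{\inter}(X_\infty;u,v,w)$ as a sum over non-empty faces $Q\subseteq P$ of the terms $E((X^\circ_Q)_\infty;u,v,w)g([Q,P]^*;uvw^2)$. Since the $Q=P$ term carries coefficient $g([P,P]^*;t)=1$, this formula is invertible: it determines $E(X^\circ_\infty;u,v,w)$ from $E_{\inter}(X_\infty;u,v,w)$ together with the $E((X^\circ_Q)_\infty;u,v,w)$ for all proper $Q\subsetneq P$. Hence the equivalence of the two theorems can be proved by induction on $\dim P$, with the inductive step amounting to the purely combinatorial verification that, when one substitutes the conjectured formula of Theorem~\ref{t:mainhyper} for every face of $P$ into the sum-over-strata formula, one recovers Corollary~\ref{c:mainintersection}.

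To carry out that substitution, set $t=uvw^2$ and split the resulting sum into a ``toric'' contribution $\sum_{\emptyset\ne Q}(t-1)^{\dim Q}g([Q,P]^*;t)$ and a ``primitive'' contribution $\sum_{\emptyset\ne Q}(-1)^{\dim Q+1}h^*(Q,\cS|_Q;u,v,w)g([Q,P]^*;t)$. The toric part is handled by the defining identity of the $g$-polynomial applied to the dual Eulerian poset $[\emptyset,P]^*$ (Definition~\ref{d:g}): isolating the $Q=\emptyset$ term and dividing by $t-1$ shows this sum equals $tE_{\inter,\Lef}(P;t)+g([\emptyset,P]^*;t)$, using definition \eqref{e:Lef}. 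For the primitive part, expand each $h^*(Q,\cS|_Q;u,v,w)$ via Definition~\ref{d:new} as $\sum_{R\subseteq Q}w^{\dim R+1}\lc(R,\cS|_R;u,v)g([R,Q];t)$ and interchange summations; the inner sum becomes $\sum_{R\subseteq Q\subseteq P}(-1)^{\dim Q+1}g([R,Q];t)g([Q,P]^*;t)$, indexed by non-empty $Q$.

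At this point Stanley's inversion formula (Theorem~\ref{t:Eulerianinverse}), applied to the Eulerian interval $[R,P]$, collapses the inner sum: it vanishes whenever $R\subsetneq P$ (after handling the $R=\emptyset,\,Q=\emptyset$ boundary case, which produces exactly the compensating term $-g([\emptyset,P]^*;t)$), and for $R=P$ it contributes $(-1)^{\dim P+1}$. Adding the two parts, the $g([\emptyset,P]^*;t)$ terms cancel and one is left with $tE_{\inter,\Lef}(P;t)+(-1)^{\dim P+1}\lc(P,\cS;u,v)w^{\dim P+1}$, matching Corollary~\ref{c:mainintersection}. The main obstacle is bookkeeping: keeping track of the $Q=\emptyset$ and $R=\emptyset$ boundary terms so that the vanishing from Stanley inversion and the defining identity for $g$ combine to cancel $g([\emptyset,P]^*;t)$ on both sides; once that cancellation is arranged, the remaining identity is a direct consequence of the definitions.
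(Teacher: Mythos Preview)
Your proposal is correct and follows essentially the same approach as the paper: both plug the formula of Theorem~\ref{t:mainhyper} for each face $Q$ into the sum-over-strata identity of Theorem~\ref{t:sumoverstrata}, split into a ``toric'' piece handled by Definition~\ref{d:g} and a ``primitive'' piece collapsed via Theorem~\ref{t:Eulerianinverse}, with the boundary term $g([\emptyset,P]^*;t)$ cancelling between the two. The only cosmetic difference is that you obtain the reverse implication by induction on $\dim P$ (exploiting that the $Q=P$ coefficient in the sum-over-strata formula is $1$), whereas the paper writes down the explicit inverse formula $E(X^\circ_\infty;u,v,w)=\sum_{\emptyset\ne Q\subseteq P}(-1)^{\dim P-\dim Q}E_{\inter}((X_Q)_\infty;u,v,w)\,g([Q,P];uvw^2)$ and reruns the same manipulation; these amount to the same thing.
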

\begin{proof}
%Theorem~\ref{t:mainhyper} states that
%\begin{equation}\label{e:main1}
%uvw^2E(X_\infty^\circ;u,v,w) = (uvw^2 - 1)^{\dim P} + (-1)^{\dim P + 1}h^*(P,\cS;u,v,w).
%\end{equation}
%while Corollary~\ref{c:mainintersection} states that
%\begin{equation}\label{e:main2}
%uvw^2E_{\inter}(X_\infty;u,v,w) = uvw^2E_{\inter,\Lef}(P;uvw^2) + (-1)^{\dim P + 1}\lc(P,\cS;u,v)w^{\dim P + 1},
%\end{equation}
%where $E_{\inter,\Lef}(P;uvw^2)$ is given by \eqref{e:Lef}. 
Firstly, we show that \eqref{e:main1} implies \eqref{e:main2}. Indeed, for every non-empty face $Q$ of $P$, \eqref{e:main1} implies  
\[
uvw^2E((X_Q^\circ)_\infty;u,v,w) = (uvw^2 - 1)^{\dim Q} + (-1)^{\dim Q + 1}h^*(Q,\cS|_Q;u,v,w).
\]
We multiply this equation by $g([Q,P]^*;uvw^2)$ and sum over all such $Q$ to obtain
\[
E_{\inter}(X_\infty;u,v,w) = \sum_{\substack{Q\subseteq P\\Q\neq \emptyset}} \big[(uvw^2 - 1)^{\dim Q} + (-1)^{\dim Q + 1}h^*(Q,\cS|_Q;u,v,w) \big]g([Q,P]^*;uvw^2),
\]
where the left hand side is computed using Theorem~\ref{t:sumoverstrata}.  
Adding the equation $0 = -g([\emptyset,P]^*;uvw^2) + g([\emptyset,P]^*;uvw^2)$, and then simplifying using Definition~\ref{d:g} and Theorem~\ref{t:Eulerianinverse}, yields \eqref{e:main2} as desired. 
We obtain \eqref{e:main1} from \eqref{e:main2} similarly. Explicitly, we claim that 
\[
E(X^\circ_\infty;u,v,w) =\sum_{\substack{Q\subseteq P\\Q\neq \emptyset}} E_{\inter}((X_Q)_\infty;u,v,w) (-1)^{\dim P - \dim Q} g([Q,P];uvw^2).
\]
This follows by applying Theorem~\ref{t:sumoverstrata} to the right hand side, together with Theorem~\ref{t:Eulerianinverse}. 
Now the above argument holds with $g([Q,P]^*;uvw^2)$ replaced by $(-1)^{\dim P - \dim Q} g([Q,P];uvw^2)$.
\end{proof}

We now give a new proof of \eqref{e:main2} in the following equivalent form:
 \begin{corollary}\label{c:intexplicit}
  Let $\K = \C(t)$ and 
let $X^\circ \subseteq (\K^*)^n$ be a sch\"{o}n hypersurface, with associated Newton polytope and polyhedral subdivision $(P,\cS)$ and $\dim P = n$. Let $X$ denote the closure of $X^\circ$ in $\P(\Delta_P)_\K$.   Then the refined limit Hodge-Deligne polynomial associated to the intersection cohomology of $X$ is given by
%Let $X^\circ \subseteq (\K^*)^n$ be a sch\"{o}n hypersurface, with associated Newton polytope and polyhedral subdivision $(P,\cS)$ and $\dim P = n$. Then the refined limit mixed Hodge numbers
%associated to the primitive cohomology of $X^\circ$ are given by
\begin{equation}\label{e:main3}
uvw^2E_{\inter,\prim}(X_\infty;u,v,w)=(-1)^{\dim P+1}w^{\dim P+1}\lc(P, \cS;u,v) 
\end{equation}
Equivalently,
%The above corollary yields the following formula analogous to Corollary~\ref{c:explicit}:
\[
\lc(P, \cS;u,v) = uv\sum_{p,q} h^{p,q}(IH_{\prim}^{\dim P - 1}(X_\infty)) u^p v^q.  
\]
 \end{corollary}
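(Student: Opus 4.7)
The plan is to exploit the purity of the primitive intersection cohomology to pin down the $w$-dependence of $E_{\inter,\prim}(X_\infty;u,v,w)$ completely in terms of a single polynomial in $u$ and $v$, and then to identify that polynomial by specializing to $w = 1$. Because the formula
\[
uv\, E((X_Q^\circ)_\infty;u,v) = (uv-1)^{\dim Q} + (-1)^{\dim Q + 1} h^*(Q,\cS|_Q;u,v)
\]
is available for every non-empty face $Q \subseteq P$ directly from the argument of Section~\ref{ss:refinedhyper} (which establishes it from the motivic nearby fiber and the Borisov--Mavlyutov formula, and does not rely on the Danilov--Khovanski\u{i}-style algorithm of Section~\ref{ss:DK}), this reduction gives, via Lemma~\ref{l:equiv}, an intersection-cohomology-based second proof of Theorem~\ref{t:mainhyper}.

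Concretely, the purity of $IH^*(X_{\gen})$ for the projective variety $X$, combined with the weak Lefschetz analysis isolating the primitive piece, forces $IH^{\dim P - 1}_{\prim}(X_\infty)$ to be pure of weight $\dim P - 1$, so
\[
E_{\inter}(X_\infty;u,v,w) = E_{\inter,\Lef}(P;uvw^2) + w^{\dim P - 1}\,\phi(u,v)
\]
for a unique $\phi(u,v) \in \Z[u,v]$. Applying Theorem~\ref{t:sumoverstrata} at $w = 1$ and substituting the face formula above for every non-empty $Q \subseteq P$ yields
\[
uv\, E_{\inter}(X_\infty;u,v,1) = \sum_{\emptyset \ne Q \subseteq P}\bigl[(uv-1)^{\dim Q} + (-1)^{\dim Q + 1}h^*(Q,\cS|_Q;u,v)\bigr] g([Q,P]^*;uv).
\]
The $(uv-1)^{\dim Q}$ contributions reassemble, via the $g$-polynomial recursion of Definition~\ref{d:g} together with the defining equation \eqref{e:Lef} for $E_{\inter,\Lef}(P;t)$, into $uv\, E_{\inter,\Lef}(P;uv) + g([\emptyset,P]^*;uv)$; the extra $g([\emptyset,P]^*;uv)$ is precisely the $Q = \emptyset$ summand (since $h^*(\emptyset;u,v) = 1$) needed for the $h^*$-sum, now ranging over all faces of $P$, to collapse by Definition~\ref{d:new} of $\lc(P,\cS;u,v)$ into $(-1)^{\dim P + 1}\lc(P,\cS;u,v)$. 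Comparison with $uv[E_{\inter,\Lef}(P;uv) + \phi(u,v)]$ forces $uv\,\phi(u,v) = (-1)^{\dim P + 1}\lc(P,\cS;u,v)$, and multiplying the purity decomposition by $uvw^2$ recovers the claim.

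The main obstacle is the purity input: one must invoke Saito's theory to see that $IH^*$ of the projective variety $X_{\gen}$ carries a pure Hodge structure, and then confirm that the part not captured by the Gysin--Lefschetz map from the ambient toric variety lies entirely in weight $\dim P - 1$, so that no powers of $w$ other than $w^{\dim P - 1}$ appear in $E_{\inter,\prim}(X_\infty;u,v,w)$. Once this is in hand, the remainder of the argument is formal combinatorial bookkeeping; conceptually, it trades the Poincar\'e duality on a smooth toric compactification used in the DK algorithm of Section~\ref{ss:DK} for the automatic purity of intersection cohomology.
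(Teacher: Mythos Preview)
Your argument is correct and follows the same strategy as the paper: use purity of the primitive intersection cohomology to force the entire $w$-dependence of $E_{\inter,\prim}(X_\infty;u,v,w)$ into a single factor $w^{\dim P-1}$, then identify the remaining polynomial in $u,v$ by specializing the sum-over-strata formula (Theorem~\ref{t:sumoverstrata}) to $w=1$ and invoking the limit Hodge--Deligne formula for each $X_Q^\circ$. Your explicit combinatorial collapse is exactly the $w=1$ specialization of the computation in Lemma~\ref{l:equiv}.

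One point deserves care. You assert that the input formula $uv\,E((X_Q^\circ)_\infty;u,v)=(uv-1)^{\dim Q}+(-1)^{\dim Q+1}h^*(Q,\cS|_Q;u,v)$ from Section~\ref{ss:refinedhyper} ``does not rely on the Danilov--Khovanski\u{\i}-style algorithm of Section~\ref{ss:DK}.'' As written in Section~\ref{ss:refinedhyper}, that formula is obtained by substituting the Borisov--Mavlyutov formula for $E(X_{F,\gen}^\circ;u,w)$ into Corollary~\ref{c:hyper}, and the Borisov--Mavlyutov formula is proved there via the algorithm of Remark~\ref{r:mixedcase}, which \emph{is} the DK algorithm in the $\Z[u,w]$ specialization. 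So your claim of DK-independence, and hence of a genuinely second proof of Theorem~\ref{t:mainhyper}, is not yet justified. The paper closes this gap by walking through the full specialization diagram: purity of $IH_{\prim}$ already lifts the $\Z[u]$ statement (Theorem~\ref{t:xy}) to $\Z[u,w]$, which via Lemma~\ref{l:equiv} gives the Borisov--Mavlyutov formula without the DK algorithm; then the motivic-nearby-fiber step of Section~\ref{ss:refinedhyper} gives $\Z[u,v]$; and finally purity again (your step) gives $\Z[u,v,w]$. If you insert the first purity step $\Z[u]\to\Z[u,w]$ explicitly, your argument becomes the paper's.
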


\begin{proof}
We have a commutative diagram
\[\xymatrix{
\Z[u,v,w]  \ar[r]^{\substack{u \mapsto uw^{-1} \\ v \mapsto 1}}   \ar[d]^{w \mapsto 1}  & \Z[u,w] \ar[d]^{w \mapsto 1}  &\\
\Z[u,v] \ar[r]^{v \mapsto 1} & \Z[u]  \ar[r]^{u \mapsto 1}  & \Z.
}\] 
We will prove $\eqref{e:main3}$ by working our way through the diagram. In fact, we have  proved the specialization of the equivalent statement $\eqref{e:main1}$ to $\Z[u]$ in Theorem~\ref{t:xy}.

In Section~\ref{ss:refinedhyper}, we proved that if $\eqref{e:main1}$ holds when specialized to $\Z[u,w]$ then $\eqref{e:main1}$ holds when specialized to 
$\Z[u,v]$.  Hence, we are left with the vertical arrows of the diagram. 
Because primitive cohomology is concentrated in $W$-degree equal to $\dim P-1$, it is clear that if $\eqref{e:main3}$ holds for $\Z[u]$, then it holds for $\Z[u,w]$.   Similarly, if $\eqref{e:main3}$ holds for $\Z[u,v]$, then it holds for $\Z[u,v,w]$. 
\end{proof}

\bibliographystyle{amsplain}
\def\cprime{$'$}
\providecommand{\bysame}{\leavevmode\hbox to3em{\hrulefill}\thinspace}
\providecommand{\MR}{\relax\ifhmode\unskip\space\fi MR }
% \MRhref is called by the amsart/book/proc definition of \MR.
\providecommand{\MRhref}[2]{%
  \href{http://www.ams.org/mathscinet-getitem?mr=#1}{#2}
}
\providecommand{\href}[2]{#2}

\end{document}